\newtheorem{theorem}{Theorem}[section]
\newtheorem{lemma}[theorem]{Lemma}
\newtheorem{proposition}[theorem]{Proposition}
\newtheorem{conjecture}[theorem]{Conjecture}
\theoremstyle{definition}
\newtheorem{definition}[theorem]{Definition}
\newtheorem{remark}[theorem]{Remark}
\newcommand{\R}{\mathbb{R}}
\newcommand{\N}{\mathbb{N}}
\newcommand{\C}{\mathbb{C}}
\newcommand{\T}{\mathbb{T}}
\newcommand{\Z}{\mathbb{Z}}
\renewcommand{\a}{\mathbf{a}}
\renewcommand{\b}{\mathbf{b}}
\begin{document}

\title{Local smoothing for the Hermite wave equation}

\author{Robert Schippa}

\address{Current address: UC Berkeley, Department of Mathematics, 970 Evans Hall
Berkeley, CA 94720-3840}
\email{robert.schippa@gmail.com}

\begin{abstract}
We show local smoothing estimates in $L^p$-spaces for solutions to the Hermite wave equation. For this purpose, we obtain a parametrix given by a Fourier Integral Operator, which we linearize. This leads us to analyze local smoothing estimates for solutions to Klein-Gordon equations. We show $\ell^2$-decoupling estimates adapted to the mass parameter to obtain local smoothing with essentially sharp derivative loss. In one dimension as consequence of square function estimates, we obtain estimates sharp up to endpoints. Finally, we elaborate on the implications of local smoothing estimates for Hermite Bochner--Riesz means.

 \end{abstract}

\maketitle

\section{Introduction}

In the following we consider local smoothing estimates for solutions to the wave equation with Harmonic potential, which is referred to as Hermite wave equation:
\begin{equation}
\label{eq:WaveHermiteEquation}
\left\{ \begin{array}{cl}
\partial_t^2 u &= \Delta u - |x|^2 u, \quad (x,t) \in \R^d \times \R, \\
u(0) &= u_0 \in L^p_s(\R^d), \quad \dot{u}(0) = 0.
\end{array} \right.
\end{equation}
Above $d \geq 1$, $s \geq 0$, and $L^p_s(\R^d)$
denotes the $L^p$-based Sobolev space of order $s$. We denote with $\mathcal{H} = - \Delta + |x|^2$ the Hermite operator, which is presently regarded as an unbounded operator on $L^2(\R^d)$. It has discrete spectrum and its eigenfunctions are the explicitly known Hermite eigenfunctions.

\medskip

 By local smoothing estimates for solutions $u$ to \eqref{eq:WaveHermiteEquation} we mean space-time $L^p$-estimates
\begin{equation}
\label{eq:LocalSmoothingWaveHermiteIntroduction}
\| u \|_{L^p_{t,x}([0,1] \times \R^d)} \lesssim \| u_0 \|_{L^p_s(\R^d)}.
\end{equation}

\medskip

The study of local smoothing estimates for solutions to the Euclidean wave equation was initiated by Sogge \cite{Sogge1991}. For context we recall that the fixed-time $L^p$-estimates for solutions to the Euclidean wave equation
\begin{equation}
\label{eq:EuclideanWaveIntroduction}
\left\{ \begin{array}{cl}
\partial_t^2 v &= \Delta v, \quad (x,t) \in \R^d \times \R, \\
v(0) &= v_0, \quad \dot{v}(0) = 0
\end{array} \right.
\end{equation}
read due to Peral and Miyachi \cite{Peral1980,Miyachi1980}
\begin{equation*}
\| v(t) \|_{L^p_x(\R^d)} \lesssim \| v(0) \|_{L^p_{s_p}(\R^d)}
\end{equation*}
with $s_p = (d-1) \big| \frac{1}{2} - \frac{1}{p} \big|$ for $p \in (1,\infty)$.

\medskip

Sogge observed that Bourgain's proof of the circular maximal theorem in two dimensions \cite{Bourgain1986} implies improved space-time $L^p$-estimates for solutions to \eqref{eq:EuclideanWaveIntroduction}:
\begin{equation*}
\| v \|_{L^p_{t,x}([0,1] \times \R^2)} \lesssim \| v(0) \|_{L^p_{s_p-\varepsilon}(\R^2)} \text{ for } 2 < p < \infty.
\end{equation*}
It is conjectured that
\begin{equation*}
\| v \|_{L^p_{t,x}([0,1] \times \R^d)} \lesssim \| v(0) \|_{L^p_{s_p - \sigma_p}(\R^d)}
\end{equation*}
for $\frac{2d}{d-1} < p < \infty$ with $\sigma_p < \frac{1}{p}.$ For a recent exhaustive review we refer to \cite{BeltranHickmanSogge2021}.

\medskip

The local smoothing conjecture is known to imply as well the Bochner-Riesz as the restriction conjecture. Both are still open in dimensions greater than $2$. Hence, the proof of sharp local smoothing estimates is an extremely difficult problem and has only been accomplished for $d=2$ by Guth--Wang--Zhang \cite{GuthWangZhang2020}. In higher dimensions the currently best range for sharp local smoothing estimates follow from $\ell^2$-decoupling due to Bourgain--Demeter \cite{BourgainDemeter2015}; see references therein for previous progress. Decoupling estimates were conceived by Wolff \cite{Wolff2000} (see also \cite{LabaWolff2002}) to make progress on the local smoothing conjecture. Let
\begin{equation*}
\mathcal{E}_c f(x,t)= \int_{\R^d} e^{i(x \cdot \xi + t |\xi|)} f(\xi) \chi_1(\xi) d\xi
\end{equation*}
denote the Fourier extension operator for the cone; $\chi_1$ denotes a smooth version of the indicator function of the unit annulus. The decoupling estimates at the critical index $p = \frac{2(d+1)}{d-1}$ read
\begin{equation}
\label{eq:ConeDecouplingIntroduciton}
\| \mathcal{E}_c f \|_{L^p_{t,x}(B_{d+1}(0,R))} \lesssim_\varepsilon R^\varepsilon \big( \sum_{\theta: R^{-\frac{1}{2}}-\text{sector}} \| \mathcal{E}_c f_{\theta} \|_{L^p_{t,x}(w_{B_{d+1}(0,R)})}^2 \big)^{\frac{1}{2}}.
\end{equation}
Above $w_{B_{d+1}(0,R)}$ denotes a weight adapted to $B_{d+1}(0,R)$ with high polynomial decay (say $100d$) off $B_{d+1}(0,R)$. 
This implies local smoothing estimates for the Euclidean wave equation for $p \geq \frac{2(d+1)}{d-1}$ with derivative loss sharp up to endpoints. Guth--Wang--Zhang \cite{GuthWangZhang2020} proved the local smoothing conjecture for $d=2$ by means of a sharp square function estimate combined with a maximal function estimate; the latter being more classical \cite{MockenhauptSeegerSogge1992,Fefferman1973,
Cordoba1977}. For recent progress in higher dimensions beyond $\ell^2$-decoupling (though not with the sharp order of derivatives) we refer to \cite{GaoLiuMiaoXi2023,BeltranSaari2022}.

\smallskip

We digress for a moment to compare local smoothing estimates for the Hermite wave equation to smoothing estimates for solutions to wave equations on compact Riemannian manifolds $(M,g)$:
\begin{equation}
\label{eq:RiemannianWaveEquation}
\left\{ \begin{array}{cl}
\partial_t^2 u &= \Delta_g u, \quad (t,x) \in \R \times M, \\
u(0) &= u_0 \in L^p_s(M), \quad \dot{u}(0) = 0.
\end{array} \right.
\end{equation}
One approach to local smoothing estimates for solutions to \eqref{eq:RiemannianWaveEquation} is based on the Fourier Integral Operator (FIO) representation of the parametrix for the half-wave equation:
\begin{equation}
\label{eq:LocalSolutionRiemannianWaveEquation}
u(t,x) = \int e^{i \phi(x,t;\xi)} a(x,t;\xi) \hat{u}_0(\xi) d\xi + R_t u,
\end{equation}
where $\phi$ solves the eikonal equation with $p(x,\xi)= |\xi|_{g(x)} = \sqrt{g^{ij}(x) \xi_i \xi_j}$. Here $g$ denotes the Riemannian metric in local coordinates, and $R_t$ an $L^p$-bounded remainder term.

\smallskip

Beltran--Hickman--Sogge \cite{BeltranHickmanSogge2020} showed that the decoupling estimates \eqref{eq:ConeDecouplingIntroduciton} transpire to the FIO described by \eqref{eq:LocalSolutionRiemannianWaveEquation}; see also \cite[Chapter~7]{Schippa2019PhD} in the context of variable-coefficient Schr\"odinger propagation. The proof leans on induction-on-scales, which is facilitated by the self-similar structure of the decoupling estimates. On sufficiently small scales, the FIO admits a pointwise approximation by its linearization. The linearization can be decoupled via the Bourgain--Demeter \cite{BourgainDemeter2015} result. For further progress on local smoothing estimates for wave equations on Riemannian manifolds see \cite{GaoLiuMiaoXi20232d,Schippa2023}.
It is important to note that in case of Riemannian manifolds $(M,g)$, the range of $L^p$-spaces, for which local smoothing estimates are expected, is strictly smaller for $\dim(M) \geq 3$ than in the Euclidean case as pointed out in examples by Minicozzi--Sogge \cite{MinicozziSogge1997}.

\medskip

The parametrix for solutions to the Hermite half-wave equation is given by
\begin{equation}
\label{eq:ParametrixHermiteHalfwaveEquation}
u(t,x) = \int e^{i \phi(x,t;\xi)} a(x,t,\xi) \hat{u}_0(\xi) d\xi,
\end{equation}
where $\phi$ solves the eikonal equation with principal symbol $p(x,\xi) = \sqrt{|x|^2 + |\xi|^2}$:
\begin{equation*}
\left\{ \begin{array}{cl}
\partial_t \phi(x,t;\xi) &= p(x,\nabla_x \phi), \\
\phi(x,0;\xi) &= \langle x, \xi \rangle.
\end{array} \right.
\end{equation*}
It suffices to show local smoothing estimates for $u$ given by \eqref{eq:ParametrixHermiteHalfwaveEquation} under the phase space localization $\{ |x-x_0| \lesssim 1 \}$ and $\{ |\xi| \sim N \}$. Indeed, the Hamilton flow is given by
\begin{equation}
\label{eq:HamiltonFlow}
\left\{ \begin{array}{cl}
\dot{x}_t &= \frac{\xi}{\sqrt{|x|^2 + |\xi|^2}}, \\
\dot{\xi}_t &= -\frac{x}{\sqrt{|x|^2 + |\xi|^2}}.
\end{array} \right.
\end{equation}
Note that $|x|^2 + |\xi|^2$ is a conserved quantity. Moreover, \eqref{eq:HamiltonFlow} indicates finite speed of propagation. When we consider a compact time interval, we can localize the solution in space on the same scale. Secondly, the frequencies are not changing essentially. If the frequencies are initially localized at scale $N$, at any point $x \in \R^d$, $|\dot{\xi}_t| \lesssim 1$. This means that the frequencies do not essentially change their modulus nor their directions, which corresponds to a difference in angle of size $N^{-\frac{1}{2}}$; we refer to \cite[Section~5.2]{HassellPortalRozendaalYung2023} for related considerations.

We remark that in one dimension Greiner--Holcman--Kannai \cite{GreinerHolcmanKannai2002} obtained an explicit solution formula for
\begin{equation*}
\left\{ \begin{array}{cl}
\partial_t^2 u &= \partial_x^2 u - |x|^2 u, \quad (t,x) \in \R \times \R, \\
u(0) &= 0, \quad \dot{u}(0) = \delta_0.
\end{array} \right.
\end{equation*}
Above $\delta_0 \in \mathcal{S}'(\R)$ denotes the $\delta$-distribution at the origin. The representation in \cite[Proposition~7]{GreinerHolcmanKannai2002} involves a contour integral and establishes a sharp form of finite speed of propagation. Kannai \cite{Kannai2000}  obtained another less implicit expression of the fundamental solution. We remark that in dimensions $d \geq 2$ d'Ancona--Pierfelice--Ricci \cite{DanconaPierfeliceRicci2010} proved the dispersive properties of the Euclidean wave equation on the unit time scale by Dirichlet series estimates.
However, for the following considerations a parametrix given by Fourier Integral Operators seems most tractable, and we hope that the present construction can be used for related problems.

\medskip

Compare \eqref{eq:HamiltonFlow} to the Hamilton flow associated to the half-wave equation on a compact Riemannian manifold with principal symbol $p(x,\xi) = |\xi|_{g(x)} = \sqrt{g^{ij}(x) \xi_i \xi_j}$:
\begin{equation*}
\left\{ \begin{array}{cl}
\dot{x}_t &= \frac{g^{-1} \xi}{|\xi|_{g(x)}}, \\
\dot{\xi}_t &= \frac{(\partial_x g^{ij}) \xi_i \xi_j}{|\xi|_{g(x)}}.
\end{array} \right.
\end{equation*}
We still observe the finite speed of propagation, $|\dot{x}_t| \lesssim 1$. However, for $|\xi| \sim N$, the angle is only approximately conserved up to times $N^{-\frac{1}{2}}$. This matches the time-scale on which the variable-coefficient oscillatory integral operator is well-approximated with the constant-coefficient Fourier extension operator.

\medskip

In the Hermite wave case the slow change as well in $x$ as in $\xi$ allows us to linearize the phase function for times \emph{independently} of the frequency scale. Moreover, we can fix the spatial scale to obtain a linearized phase function, which is of Klein-Gordon type (after normalizing the frequencies $|\xi| \sim 1$):
\begin{equation}
\label{eq:KleinGordonIntroduction}
\varphi(\xi) = \sqrt{|\xi|^2 + m^2}.
\end{equation}
$m$ depends on the modulus of $x_0$, which is the point in space around which we linearize, and the frequency scale $N$. 

\medskip

The analysis of the Klein-Gordon propagation for frequencies $|\xi| \sim N$ on the time-scale $|t| \lesssim 1$
\begin{equation*}
S_{m^2,N} f(x,t) = \int_{\R^d} e^{i(x \cdot \xi + t \varphi(\xi))} \hat{f}(\xi) \chi_N(\xi) d\xi
\end{equation*}
 splits into the wave regime, where $m^2 \lesssim N$, the elliptic regime, $N \lesssim m^2 \lesssim N^3$, and the stationary regime $m^2 \gtrsim N^3$. In the wave regime the propagation is indistinguishable from the wave propagation. When $m^2$ increases and becomes comparable to $1$, we are in the elliptic regime: The dispersive effects are strongest and the derivative loss for local smoothing becomes largest. Finally, for $m^2 \gg N$ the dispersive effects decrease again and for $m^2 \gtrsim N^3$, there is essentially no propagation anymore. 

Pointwise $L^p$-estimates, which are uniform in $N$ and $m^2$ read
\begin{equation*}
\| S_{m^2,N} f(1) \|_{L^p(\R^d)} \lesssim N^{d \big| \frac{1}{2} - \frac{1}{p} \big|} \| f \|_{L^p(\R^d)}.
\end{equation*}
Again, the maximal derivative loss stems from the elliptic regime $m \sim N$.
Integration in $L^p$ in time leads to smoothing.
Based on the Knapp examples from Section \ref{section:Knapp}, the local smoothing conjecture for the Klein--Gordon equations after frequency localization reads
\begin{equation*}
\| S_{m^2,N} f \|_{L^p_{t,x}([0,1] \times \R^d)} \lesssim N^{s} \| f \|_{L^p(\R^d)}
\end{equation*}
for
\begin{equation*}
s \geq \max( d \big| \frac{1}{2} - \frac{1}{p} \big| - \frac{1}{p},0 ).
\end{equation*}
Note that although strictly speaking the Knapp examples only apply to the linearized evolution of the Hermite wave equation, it is not too difficult to reverse the linearization by another Fourier series argument. This argument has many precursors, and we exemplarily refer to Beltran--Hickman--Sogge \cite[Lemma~2.6]{BeltranHickmanSogge2020} for details. We are led to the following:
\begin{conjecture}[Local~smoothing~conjecture~for~the~Hermite~wave~equation]
\label{conj:LocalSmoothing}
Let $d \geq 1$, $2 < p < \infty$, and $0<T<\infty$. Let $u$ be a solution to \eqref{eq:WaveHermiteEquation}. The estimate
\begin{equation*}
\| u \|_{L^p_{t,x}([-T,T] \times \R^d)} \lesssim_T \| u_0 \|_{L^p_s(\R^d)}
\end{equation*}
holds provided that
\begin{equation*}
s \geq \max \big( d \big| \frac{1}{2} - \frac{1}{p} \big| - \frac{1}{2}, 0 \big).
\end{equation*}
\end{conjecture}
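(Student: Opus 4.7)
The plan is to use the Fourier Integral Operator parametrix \eqref{eq:ParametrixHermiteHalfwaveEquation} together with phase space localization to reduce the Hermite wave estimate to a family of Klein--Gordon local smoothing estimates, after which the three regimes described in the introduction can be analyzed separately.

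First, by the finite speed of propagation built into the Hamilton flow \eqref{eq:HamiltonFlow}, it suffices to prove \eqref{eq:LocalSmoothingWaveHermiteIntroduction} for initial data localized to $\{|x-x_0| \lesssim 1\}$ in position and $\{|\xi| \sim N\}$ in frequency, and then dyadically sum over $N$ via a Littlewood--Paley decomposition associated to $\mathcal{H}$. On such a patch, since the Hamilton flow moves $x$ and $\xi$ at rate $O(1)$ for $|t| \lesssim 1$, the phase $\phi(x,t;\xi)$ admits a uniform linearization around $(x_0,\xi_0)$. After rescaling $\xi = N\eta$ with $|\eta| \sim 1$, its leading part is $N\sqrt{|\eta|^2 + m^2}$ with effective mass $m = |x_0|/N$, so that the localized FIO reduces to the Klein--Gordon extension operator $S_{m^2,N}$, up to lower-order terms and harmless amplitudes.

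Second, one proves the local smoothing bound for $S_{m^2,N}$ in each of the three regimes. In the wave regime $m^2 \lesssim N$ the Klein--Gordon cone is essentially the light cone, so the $\ell^2$-decoupling estimate \eqref{eq:ConeDecouplingIntroduciton} yields the conjectured bound up to $N^\varepsilon$; for $d=2$ the Guth--Wang--Zhang square function estimate \cite{GuthWangZhang2020} removes the loss. In the stationary regime $m^2 \gtrsim N^3$ the phase $\sqrt{|\xi|^2+m^2}$ is essentially affine in $\xi$ on $\{|\xi| \sim N\}$, so no dispersion occurs and the fixed-time estimate integrated in $t$ already suffices. The heart of the matter is the elliptic regime $N \lesssim m^2 \lesssim N^3$, where the Klein--Gordon hypersurface has radial and angular principal curvatures of different order. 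Here I would develop an $\ell^2$-decoupling inequality adapted to the mass parameter, partitioning the frequency annulus into anisotropic caps whose dimensions reflect the ratio $m/\sqrt{N}$, and derive it via the Bourgain--Demeter scheme for hypersurfaces of variable curvature.

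The main obstacle is transferring the model decoupling estimate to the variable-coefficient FIO parametrix uniformly in $m$. Concretely, one needs a pointwise approximation of $\phi$ by its linearization on unit balls with quantitative error control, and then the Beltran--Hickman--Sogge induction-on-scales scheme \cite{BeltranHickmanSogge2020} to push the $\ell^2$-decoupling through; closing the induction in the elliptic regime is delicate because the natural self-similar scale is mass-dependent. One then returns to \eqref{eq:WaveHermiteEquation} by the Fourier series argument of \cite[Lemma~2.6]{BeltranHickmanSogge2020} to reverse the linearization, sums over spatial patches with bounded overlap, and sums dyadically over $N$. A secondary obstruction is that reaching the sharp exponent in the Conjecture would require square function estimates adapted to the Klein--Gordon surface; in $d=1$ these reduce to classical C\'ordoba--Fefferman theory and give sharpness up to endpoints, but for $d \geq 2$ they encode the full local smoothing conjecture for the wave equation and are therefore out of reach with current technology, so $\ell^2$-decoupling alone produces the estimate only up to $N^\varepsilon$.
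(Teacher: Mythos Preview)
The statement in question is a \emph{conjecture}, not a theorem: the paper does not prove it, and indeed in higher dimensions it would imply the local smoothing conjecture for the Euclidean wave equation (via the wave regime) and more. What the paper actually proves are the partial results Theorems~\ref{thm:LocalSmoothing1d}, \ref{thm:CompactlySupportedInitialData}, and \ref{thm:GeneralInitialData}. Your proposal is not a proof but a strategy outline, and as such it matches the paper's programme closely: FIO parametrix, linearization to Klein--Gordon propagation, regime-by-regime analysis via decoupling and square function estimates. You correctly identify the obstructions and acknowledge that the full conjecture is out of reach.

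One substantive point where your outline diverges from the paper is the transfer from constant- to variable-coefficient. You propose to use the Beltran--Hickman--Sogge induction-on-scales machinery and worry that ``closing the induction in the elliptic regime is delicate because the natural self-similar scale is mass-dependent.'' The paper avoids this entirely. The key observation (Sections~\ref{section:Preliminaries}--\ref{section:LinearizationReduction}) is that the Hermite Hamilton flow moves both $x$ and $\xi$ at rate $O(1)$, not $O(N)$; hence after rescaling to unit frequencies the phase $\varphi_N(x,t;\xi)$ differs from its linearization $x\cdot\xi + t\sqrt{|\xi|^2 + |x_0|^2/N^4}$ by a remainder $\mathcal{E}_N$ whose $\xi$-derivatives are bounded uniformly on the \emph{entire} ball $B_{d+1}(0,N)$ (Lemma~\ref{lem:EstimatesRemainderTerm}). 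A single Fourier series expansion of $e^{i\mathcal{E}_N}$ then reduces the variable-coefficient FIO to the translation-invariant Klein--Gordon propagator $S_{m^2}$ (Proposition~\ref{prop:KleinGordonReduction}), with no induction on scales whatsoever. This is simpler and more robust than what you sketch, and it explains why the elliptic regime causes no additional variable-coefficient difficulty.

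A minor correction: the obstruction in $d\geq 2$ is not just an $N^\varepsilon$ loss. Elliptic $\ell^2$-decoupling only yields the conjectured derivative loss for $p\geq \frac{2(d+2)}{d}$ (Theorem~\ref{thm:GeneralInitialData}); extending the sharp smoothing to the full range $2<p<\infty$ conjectured here, in particular down to the critical $p_c = \frac{2(d+1)}{d}$, would require estimates at least as strong as the Fourier restriction conjecture for elliptic surfaces, which is open.
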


By means of linearizing the parametrix from Section \ref{section:Preliminaries} and \ref{section:LinearizationReduction}, the proof will follow from local smoothing estimates for Klein-Gordon equations with dispersion relation given by \eqref{eq:KleinGordonIntroduction}. For the Schr\"odinger equation, Rogers \cite{Rogers2008} showed that local smoothing estimates
\begin{equation*}
\| e^{it \Delta} u_0 \|_{L^p_{t,x}([0,1] \times \R^d)} \lesssim \| u_0 \|_{L^p_s(\R^d)}
\end{equation*}
for $p \geq \frac{2(d+1)}{d}$ and $s > 2d \big( \frac{1}{2} - \frac{1}{p} \big) - \frac{2}{p}$ are equivalent to Fourier extension estimates
\begin{equation*}
\| \int_{\{ |\xi| \leq 1 \} } e^{i(x \cdot \xi + t |\xi|^2)} f(\xi) d\xi \|_{L^p_{t,x}(B_{d+1}(0,R))} \lesssim_\varepsilon R^\varepsilon \| f \|_{L^p}.
\end{equation*}
So, for the Schr\"odinger equation the restriction and local smoothing conjecture are indeed equivalent. Consequently, for Klein-Gordon equations we expect that proving the sharp local smoothing range is at least as hard as the restriction conjecture for more general elliptic surfaces.

Further remarks on local smoothing estimates for Schr\"odinger-like equations are in order: For fractional Schr\"odinger evolutions we refer to the work by Rogers--Seeger \cite{RogersSeeger2010}, in which moreover $L^p$-maximal estimates were proved. Lee--Rogers--Seeger \cite{LeeRogersSeeger2013} further elaborated on equivalence of Fourier restriction and local smoothing estimates for the Schr\"odinger equation.
On the other hand, it appears that assuming the phase space localization $\{|x| \sim |\xi| \sim N \}$, the above result can be improved using Bochner--Riesz square function estimates; see \cite{GanOhWu2021,Lee2018}.

\medskip

Next, we discuss our progress on Conjecture \ref{conj:LocalSmoothing}.
Once we have linearized the parametrix, we rely on square function and decoupling estimates to obtain local smoothing with derivative loss sharp up to endpoints. 
 In one dimension, we combine a square function estimate in $L^4$, which is adapted to $m^2 = |x_0|^2 / N^4$ with Kakeya maximal function estimates to prove the conjecture up to the endpoint:

\begin{theorem}[Local~smoothing~in~one~dimension]
\label{thm:LocalSmoothing1d}
Let $2 < p < \infty$, $s > \max(\frac{1}{2}-\frac{2}{p},0)$, and $u_0 \in L^p_{s}(\R)$. Let $u$ be a solution to \eqref{eq:WaveHermiteEquation}. Then the following estimate holds:
\begin{equation}
\label{eq:LocalSmoothing1dIntroduction}
\| u \|_{L_t^p([0,1],L_x^p(\R))} \lesssim_s \| u_0 \|_{L^p_{s}(\R)}.
\end{equation}
\end{theorem}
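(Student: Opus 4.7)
The plan is to reduce Theorem \ref{thm:LocalSmoothing1d} to an $L^4_{t,x}$ local smoothing estimate with arbitrarily small derivative loss for the linearized Klein--Gordon propagator in one spatial dimension, and then recover the full range by interpolation.

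\textbf{Reduction to linear Klein--Gordon pieces.} Starting from the parametrix \eqref{eq:ParametrixHermiteHalfwaveEquation}, I first perform a Littlewood--Paley decomposition in frequency, so that it suffices to estimate pieces with initial data frequency-localized to $|\xi|\sim N$. By finite speed of propagation and the near-conservation of $|x|^2+|\xi|^2$ along \eqref{eq:HamiltonFlow}, such a piece can be further localized in physical space to a unit ball $\{|x-x_0|\lesssim 1\}$ while preserving the frequency localization. Freezing $x=x_0$ in $\nabla_\xi^2\phi$ and rescaling $\xi$ to unit scale as in Sections \ref{section:Preliminaries} and \ref{section:LinearizationReduction} reduces the estimate, modulo acceptable symbol remainders and $N^\varepsilon$-losses, to
\begin{equation*}
\| S_{m^2,N} f \|_{L^p_{t,x}([0,1]\times\R)} \lesssim_\varepsilon N^{s+\varepsilon}\|f\|_{L^p(\R)}
\end{equation*}
for the linear Klein--Gordon propagator with dispersion $\varphi(\xi)=\sqrt{|\xi|^2+m^2}$ and mass parameter $m$ determined by $x_0$ and $N$.

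\textbf{Square function and Kakeya at $p=4$.} To prove the displayed bound at $p=4$ with $s=\varepsilon$, I would decompose the annulus $|\xi|\sim N$ into arcs $\theta$ of length adapted to the curvature $m^2/(|\xi|^2+m^2)^{3/2}$ of $\varphi$, so that on each arc the graph $\tau=\varphi(\xi)$ is essentially a line segment at the unit space-time scale. The heart of the argument is the mass-adapted square function estimate
\begin{equation*}
\| S_{m^2,N} f \|_{L^4_{t,x}([0,1]\times\R)} \lesssim_\varepsilon N^\varepsilon \Big\| \Big( \sum_\theta |S_{m^2,N} f_\theta|^2 \Big)^{1/2} \Big\|_{L^4_{t,x}([0,1]\times\R)},
\end{equation*}
the 1D space-time analogue of Córdoba's classical $L^4$ square function estimate for plane curves of non-vanishing curvature. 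It follows from a $TT^*$ argument exploiting the quantitative transversality of the tangent lines to $\tau=\varphi(\xi)$ at distinct arcs $\theta,\theta'$. The right-hand side is then controlled by squaring, expanding, and bounding the resulting expression by the 2D Kakeya maximal function of the wave-packet densities along the tubes dual to the $\theta$'s, which is $L^2$-bounded with at most a logarithmic loss \cite{Cordoba1977}. This yields $\|S_{m^2,N}f\|_{L^4_{t,x}}\lesssim_\varepsilon N^\varepsilon \|f\|_{L^4}$, uniformly in $m$.

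\textbf{Interpolation and reassembly.} The range $2<p\leq 4$ with $s>0$ follows by interpolating the $L^4$ estimate against $L^2$ conservation. For $p>4$, Sobolev embedding $H^{1/2+\varepsilon}(\R)\hookrightarrow L^\infty(\R)$ combined with the energy identity gives a uniform bound $\|u\|_{L^\infty_{t,x}}\lesssim \|u_0\|_{L^2_{1/2+\varepsilon}}$, and interpolation against the $L^4$ estimate produces $s>1/2-2/p$ across $p\in[4,\infty)$. Undoing the linearization by the Fourier series argument of Beltran--Hickman--Sogge \cite[Lemma~2.6]{BeltranHickmanSogge2020} and summing dyadically in $N$ and over the centers $x_0$ (which overlap only boundedly at any given point by finite speed of propagation) yields \eqref{eq:LocalSmoothing1dIntroduction}.

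\textbf{Main obstacle.} The principal difficulty is securing the uniformity of the $L^4$ square function estimate in the mass parameter $m$ as it ranges over the three regimes $m^2\lesssim N$, $N\lesssim m^2\lesssim N^3$, and $m^2\gtrsim N^3$. In the wave regime the curvature of $\varphi$ degenerates and the square function provides no gain, so the argument must revert to the trivial 1D wave estimate; in the elliptic regime the cap size must be calibrated carefully so that both the transversality needed for the square function and the Kakeya bound remain sharp; and in the stationary regime the propagation nearly freezes and a direct estimate suffices. Assembling the three regimes into a single uniform statement after the natural parabolic rescalings, and verifying that the Fourier series argument reversing the linearization does not reintroduce losses larger than the $N^\varepsilon$ provided by the square function, is the main technical point.
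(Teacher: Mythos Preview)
Your reduction to Klein--Gordon pieces and the $L^4$ argument via a mass-adapted square function estimate followed by the Kakeya maximal function bound are essentially the paper's approach (Proposition~\ref{prop:SquareFunction1d} plus the argument leading to \eqref{eq:ReductionMaximalFunction}), and are correct in outline.

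There is, however, a genuine gap in your interpolation step for $p>4$. Your proposed endpoint $\|u\|_{L^\infty_{t,x}}\lesssim\|u_0\|_{L^2_{1/2+\varepsilon}}$ carries an $L^2$-based Sobolev norm on the right, while the $L^4$ estimate carries $\|u_0\|_{L^4_\varepsilon}$. Complex interpolation between a map $L^4_\varepsilon\to L^4_{t,x}$ and a map $L^2_{1/2+\varepsilon}\to L^\infty_{t,x}$ produces a map $L^r_s\to L^p_{t,x}$ where $1/p=(1-\theta)/4$ and $1/r=(1-\theta)/4+\theta/2$; for $p>4$ this forces $r<4<p$, so you control $\|u\|_{L^p_{t,x}}$ only by a Sobolev norm of \emph{lower} Lebesgue exponent than $p$, not by $\|u_0\|_{L^p_s}$ as \eqref{eq:LocalSmoothing1dIntroduction} requires. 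Bernstein does not repair this: for data Fourier-supported at scale $N$ in one dimension one has $\|f\|_{L^p}\lesssim N^{1/2-1/p}\|f\|_{L^2}$, not the reverse inequality you would need.

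The paper's fix is to replace the Sobolev/energy endpoint by a fixed-time $L^q\to L^q$ bound for the Klein--Gordon propagator, namely Proposition~\ref{prop:PointwiseEstimates}: for $d=1$,
\[
\|S_{m^2,N}f(t)\|_{L^q_x(\R)}\lesssim N^{1/2-1/q}\|f\|_{L^q_x(\R)}
\]
uniformly in $m$ and $t\in[0,1]$, proved by decomposing the unit annulus into $N^{-1/2}$-intervals and using a kernel estimate on each piece. Integrating in $t$ gives $\|u\|_{L^q_{t,x}([0,1]\times\R)}\lesssim\|u_0\|_{L^q_{1/2-1/q+\varepsilon}}$, which now has matching Lebesgue exponents on both sides. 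Interpolating this against the $L^4$ estimate and letting $q\to\infty$ yields exactly $s>\tfrac{1}{2}-\tfrac{2}{p}$ for $4\le p<\infty$. Your argument becomes correct once you swap in this $L^q\to L^q$ endpoint.
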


With the linearization described above in mind, we recover for essentially compactly supported initial data the local smoothing estimates for the wave propagation:
\begin{theorem}
\label{thm:CompactlySupportedInitialData}
Let $d \geq 2$ and $u_0 \in C^\infty_c(\R^d)$. Suppose that $u_0$ is supported in a ball of radius $R$ centered at the origin, and let $u$ be the solution to \eqref{eq:WaveHermiteEquation}. Then it holds:
\begin{equation*}
\| u \|_{L_t^p([0,1],L_x^p(\R^d))} \lesssim_{R} \| u_0 \|_{L^p_s(\R^d)}
\end{equation*}
provided that
\begin{equation*}
p \geq \frac{2(d+1)}{d-1} \text{ and } s > (d-1) \big( \frac{1}{2} - \frac{1}{p} \big) - \frac{1}{p}.
\end{equation*}
\end{theorem}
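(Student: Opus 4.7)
The plan is to combine finite speed of propagation with the linearization--reduction to Klein--Gordon local smoothing already outlined in the introduction, exploiting the fact that compactly supported initial data confines the linearized phase to the \emph{wave regime} $m^2 \lesssim N$. In this regime the Klein--Gordon propagator is indistinguishable from the wave propagator at scale $N$, so the variable-coefficient cone decoupling of Beltran--Hickman--Sogge \cite{BeltranHickmanSogge2020} applies and produces the Euclidean wave threshold $s > (d-1)(1/2 - 1/p) - 1/p$ for $p \geq 2(d+1)/(d-1)$.

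I would first note that the Hamilton flow \eqref{eq:HamiltonFlow} satisfies $|\dot x_t| \leq 1$, so the standard energy argument applied to \eqref{eq:WaveHermiteEquation} yields finite speed of propagation: if $\mathrm{supp}(u_0) \subset B(0,R)$, then $\mathrm{supp}(u(t,\cdot)) \subset B(0, R+1)$ for $t \in [0,1]$. Next I perform a dyadic Fourier frequency decomposition $u_0 = \sum_N P_N u_0$ and write $u_N$ for the corresponding solution. Inserting a partition of unity in the spatial variable compatible with the parametrix \eqref{eq:ParametrixHermiteHalfwaveEquation}, the amplitude is effectively localized to
\begin{equation*}
\{|x - x_0| \lesssim 1\} \times \{|\xi| \sim N\}, \qquad |x_0| \lesssim R.
\end{equation*}

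Linearizing the phase $\phi$ about such $(x_0, \xi_0)$ following Sections \ref{section:Preliminaries} and \ref{section:LinearizationReduction} with principal symbol $p(x,\xi) = \sqrt{|x|^2 + |\xi|^2}$, the linearized propagator is of Klein--Gordon type \eqref{eq:KleinGordonIntroduction} with mass parameter $m \sim |x_0|$ at frequency scale $N$. Since $|x_0| \lesssim R$ is bounded, the wave regime $m^2 \lesssim N$ holds for all $N \gtrsim R^2$; the finitely many smaller dyadic scales are harmless by Bernstein and Sobolev embedding. For such $N$ the rescaled linearized phase is a small perturbation of the conic phase $|\xi|$, so the FIO $\ell^2$-decoupling of \cite{BeltranHickmanSogge2020} applies and yields
\begin{equation*}
\|u_N\|_{L^p_{t,x}([0,1] \times \R^d)} \lesssim_{\varepsilon, R} N^{(d-1)(1/2 - 1/p) - 1/p + \varepsilon} \|P_N u_0\|_{L^p(\R^d)}
\end{equation*}
for $p \geq 2(d+1)/(d-1)$. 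Summing over dyadic $N$ via Littlewood--Paley in $L^p$ with $1 < p < \infty$ closes the estimate.

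The main obstacle is showing that the linearization error remains harmless uniformly in both $|x_0| \lesssim R$ and $N \gtrsim R^2$, and that the cinematic curvature hypothesis of \cite{BeltranHickmanSogge2020} is satisfied throughout the wave regime, not merely at $m = 0$. The latter should follow from the expansion $\sqrt{|\xi|^2 + m^2} = |\xi| + O(m^2/|\xi|)$ valid when $m^2/|\xi|^2 \ll 1$, which shows that the Klein--Gordon hypersurface degenerates to the light cone as $m^2/N \to 0$ while retaining the nondegenerate principal curvatures; one must nevertheless verify carefully that the $N^{-1/2}$-sector geometry underlying \eqref{eq:ConeDecouplingIntroduciton} survives this degeneration uniformly in $m$, so that the resulting constants do not blow up along the family of rescaled phases.
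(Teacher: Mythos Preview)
Your proposal is correct and follows essentially the same route as the paper: reduce via the linearization of Sections~\ref{section:Preliminaries}--\ref{section:LinearizationReduction} to the Klein--Gordon propagator $S_{m^2}$, observe that compact spatial support forces the wave regime (in the paper's rescaled variables this reads $m^2 \lesssim 1/N$ for $N \gtrsim_R 1$), and then apply cone decoupling. The one place where you and the paper diverge is the decoupling step itself. You invoke the variable-coefficient FIO decoupling of Beltran--Hickman--Sogge and flag as the main obstacle the verification of their cinematic curvature hypothesis uniformly in $m$; the paper avoids this entirely by noting (Proposition~\ref{prop:ApproxConeDecoupling}) that once the phase has been linearized to the constant-coefficient form $\sqrt{|\xi|^2+m^2}$ with $m^2 \lesssim 1/N$, the Fourier support of $S_{m^2} f$ already sits inside the $1/N$-neighbourhood of the light cone, so Bourgain--Demeter \cite[Theorem~1.2]{BourgainDemeter2015} applies directly with no perturbative argument needed. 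This is both simpler and sidesteps the uniformity concern you raise. After decoupling, the paper finishes with a fixed-time kernel estimate on each $N^{-1/2}$-sector, H\"older from $\ell^2$ to $\ell^p$, and the summation estimate \eqref{eq:lpLpSummation}; your Littlewood--Paley summation in $N$ is the same endgame.
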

Under the assumption on the phase space localization, we find the linearization to resemble the Euclidean wave propagation. Then we can apply the Bourgain--Demeter decoupling estimates \eqref{eq:ConeDecouplingIntroduciton} for the cone.
We remark that in the case $d=2$ we can as well use the square function estimate due to Guth--Wang--Zhang \cite{GuthWangZhang2020} and maximal estimates to show local smoothing sharp up to endpoints.


\medskip

If we do not impose a phase space localization of the solution, the elliptic regime $\{ |x| \sim |\xi| \sim N \}$ will make a contribution.  In this case the derivative loss will be increased compared to Theorem \ref{thm:CompactlySupportedInitialData}:
\begin{theorem}
\label{thm:GeneralInitialData}
Let $d \geq 2$ and $u$ be a solution to \eqref{eq:WaveHermiteEquation}. Then the following estimate holds:
\begin{equation}
\label{eq:LocalSmoothingGeneralInitialData}
\| u \|_{L_{t,x}^p([0,1] \times \R^d)} \lesssim \| u_0 \|_{L^p_s(\R^d)}
\end{equation}
provided that
\begin{equation*}
p \geq \frac{2(d+2)}{d} \text{ and } s > d \big( \frac{1}{2} - \frac{1}{p} \big) - \frac{1}{p}.
\end{equation*}
\end{theorem}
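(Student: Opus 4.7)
The plan is to combine the Fourier Integral Operator parametrix from Sections~\ref{section:Preliminaries} and~\ref{section:LinearizationReduction} with mass-adapted $\ell^2$-decoupling estimates for the Klein--Gordon surface. After a Littlewood--Paley decomposition of $u_0$ at dyadic frequency $N$ and, by the finite speed of propagation visible in \eqref{eq:HamiltonFlow}, a spatial localization on a unit cube $B(x_0,1)$, I would linearize the phase $\phi$ of \eqref{eq:ParametrixHermiteHalfwaveEquation} around $(x_0,\xi_0)$ with $|\xi_0|\sim N$. The slow change of $|x|$ and of the direction of $\xi$ along the Hamilton flow, emphasised in the discussion after \eqref{eq:HamiltonFlow}, is precisely what makes the linearization accurate on the full time interval $[0,1]$ with no loss in $N$. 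The linearized propagator is the Klein--Gordon propagator $S_{m^2,N}$ of mass $m$ depending on $|x_0|$ and $N$, and the theorem reduces to a uniform bound
\begin{equation*}
\|S_{m^2,N} f\|_{L^p_{t,x}([0,1] \times \R^d)} \lesssim_\varepsilon N^{d(1/2 - 1/p) - 1/p + \varepsilon} \|f\|_{L^p(\R^d)},
\end{equation*}
which is then transferred back to \eqref{eq:LocalSmoothingGeneralInitialData} by $L^p$-summing over the cells $B(x_0,1)$, by dyadic summation in $N$, and by inverting the linearization through the Fourier-series argument of Beltran--Hickman--Sogge \cite[Lemma~2.6]{BeltranHickmanSogge2020}.

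\medskip

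The Klein--Gordon bound is handled regime by regime. In the \emph{wave} regime $m^2 \lesssim N$ the surface $\{\tau = \sqrt{|\xi|^2 + m^2},\, |\xi|\sim N\}$ is indistinguishable from the cone at scale $N$, so cone decoupling \eqref{eq:ConeDecouplingIntroduciton} of Bourgain--Demeter gives a loss of $(d-1)(\tfrac12 - \tfrac1p) - \tfrac1p + \varepsilon$ at $p \geq 2(d+1)/(d-1)$, and interpolation with the trivial $L^2$ bound covers the larger range $p \geq 2(d+2)/d$ with loss strictly below the target. In the \emph{stationary} regime $m^2 \gtrsim N^3$ the group velocity $\xi/\sqrt{|\xi|^2 + m^2}$ is $O(N^{-1/2})$, so $S_{m^2,N}$ behaves on $[0,1]$ as a Fourier multiplier of bounded norm and Bernstein with $L^2$-conservation is enough. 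The decisive case is the \emph{elliptic} regime $N \lesssim m^2 \lesssim N^3$: after an anisotropic rescaling that normalizes the frequencies and the effective mass, the piece of surface under consideration becomes a uniformly convex paraboloid, and I would prove the mass-adapted $\ell^2$-decoupling
\begin{equation*}
\|S_{m^2,N} f\|_{L^p_{t,x}(B_R)} \lesssim_\varepsilon R^\varepsilon \Bigl( \sum_\theta \|S_{m^2,N} f_\theta\|_{L^p_{t,x}(w_{B_R})}^2 \Bigr)^{1/2}
\end{equation*}
at the critical index $p = 2(d+2)/d$, where $\theta$ ranges over caps adapted to the Klein--Gordon surface at the natural $m$-dependent scale. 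Combined with Bernstein on each cap and $L^2$-orthogonality, and interpolated with the fixed-time $L^p$-bound for $p > 2(d+2)/d$, this yields the target loss $d(\tfrac12 - \tfrac1p) - \tfrac1p + \varepsilon$.

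\medskip

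The principal obstacle is establishing the elliptic decoupling uniformly in $m$ with a constant of the form $C_\varepsilon N^\varepsilon$. At the wave end $m^2 \sim N$ one principal curvature of the Klein--Gordon surface degenerates and the geometry transitions to conic, while near the stationary end $m^2 \sim N^3$ the surface flattens altogether. Consequently the caps $\theta$ must be chosen at an $m$-dependent anisotropic scale so that after rescaling the surface is quantitatively a paraboloid with curvatures bounded above and below, and the Bourgain--Demeter theorem applies with constants independent of $m$. A secondary technical issue is to ensure that the errors from linearising the phase and from summing over the unit spatial cells $B(x_0,1)$ are absorbed into the $\varepsilon$-slack provided by the strict inequality on $s$ in \eqref{eq:LocalSmoothingGeneralInitialData}.
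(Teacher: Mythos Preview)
Your outline matches the paper's approach: reduce via Proposition~\ref{prop:KleinGordonReduction} to uniform Klein--Gordon smoothing, split into wave/elliptic/stationary regimes, apply decoupling at $p=\frac{2(d+2)}{d}$, then interpolate with pointwise bounds for larger $p$. The regime boundaries and the endgame (kernel estimate on caps, H\"older from $\ell^2$ to $\ell^p$, $\ell^p L^p$ summation of projections) are exactly as in the paper.

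There is, however, a genuine gap in the elliptic regime. You write that ``after an anisotropic rescaling \ldots\ the piece of surface becomes a uniformly convex paraboloid,'' and later that caps must be taken at an $m$-dependent anisotropic scale. But a single global anisotropic rescaling cannot make the Klein--Gordon annulus $\{(\xi,\sqrt{|\xi|^2+m^2}):|\xi|\sim 1\}$ uniformly elliptic when $m\ll 1$: the degenerate principal direction is the \emph{radial} one, which rotates as $\xi$ varies over the annulus, so no linear change of variables straightens it everywhere. The paper handles this with a two-step decoupling (Propositions~\ref{prop:DecouplingRadiallyDegenerate}--\ref{prop:SmallDecouplingRadiallyDegenerate}): first observe that the surface lies in the $m^2$-neighbourhood of the cone and apply Bourgain--Demeter \emph{cone} decoupling to localize to sectors of aperture $\sim m$; only then, within each narrow sector where the radial direction is essentially fixed, does the anisotropic rescaling $\xi_1\mapsto m\xi_1$ produce a genuinely elliptic surface to which paraboloid decoupling applies. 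This Pramanik--Seeger-type concatenation is the substantive new ingredient, and your proposal does not supply it. For $m\gtrsim 1$ (uniformly degenerate case) the isotropic rescaling in Proposition~\ref{prop:UniformlyDegenerateDecoupling} does work directly, so that half of the elliptic regime is as you describe.
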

For the proof we linearize and apply the decoupling theorem for elliptic surfaces. We show new decoupling estimates for elliptic surfaces, which are degenerate in the radial direction.

We supplement the results with examples, which show that the derivative loss cannot be improved. It seems reasonable to conjecture based on the local smoothing estimates for Schr\"odinger equations that \eqref{eq:LocalSmoothingGeneralInitialData} remains valid for
\begin{equation*}
p \geq \frac{2(d+1)}{d} \text{ and } s > d \big( \frac{1}{2} - \frac{1}{p} \big) - \frac{1}{p}.
\end{equation*}

\medskip

In this context, we comment on local smoothing estimates for the Hermite operator:
\begin{equation*}
\| e^{it \mathcal{H}} f \|_{L^p_{t,x}([-c,c] \times \R^d)} \lesssim \| f \|_{L^p_s(\R^d)}.
\end{equation*}
It turns out that for $c < \frac{\pi}{4}$, these are equivalent to the local smoothing estimates for the Schr\"odinger equation. This can be seen from the pseudo-conformal transform, also known as lens transform (cf. \cite{Thomann2009,BurqPoiretThomann2023,Tao2009}). 

The lens transform allows us to compare the solutions to the initial value problem with Hermite potential
\begin{equation*}
\left\{ \begin{array}{cl}
i \partial_t u + \mathcal{H} u &= 0, \quad (t,x) \in (-\frac{\pi}{4},\frac{\pi}{4}) \times \R^d, \\
u(0) &= u_0 
\end{array} \right.
\end{equation*}
to the solutions to the Schr\"odinger equation without potential:
\begin{equation*}
u(t,x) = \big( \frac{1}{\cos(2t)} \big)^{\frac{d}{2}} v \big( \frac{\tan(2t)}{2}, \frac{x}{\cos(2t)} \big) e^{-i \frac{|x|^2 \tan(2t)}{2}}.
\end{equation*}
Here $v$ denotes the solution to the Cauchy problem for the Schr\"odinger equation:
\begin{equation*}
\left\{ \begin{array}{cl}
i \partial_t v + \Delta v &= 0 , \quad (t,x) \in \R \times \R^d, \\
v(0) &= u_0
\end{array} \right.
\end{equation*}

Consequently, for $c < \frac{\pi}{4}$ it holds:
\begin{equation*}
\| u \|_{L^p_{t,x}([-c,c] \times \R^d)} \lesssim_c \| v \|_{L^p_{t,x}([-C,C] \times \R^d)}, \quad C= \frac{\tan(2c)}{2},
\end{equation*}
and we can use the smoothing estimates for the Schr\"odinger equation due to Rogers \cite{Rogers2008}. It is an interesting question to consider smoothing estimates for $c \geq \frac{\pi}{4}$. By the above, one could surmise that these correspond to global-in-time smoothing estimates for the Schr\"odinger equation. We remark that Bongioanni--Rogers \cite{BongioanniRogers2011} investigated the following smoothing estimates for $T < \infty$:
\begin{equation*}
\| e^{it \mathcal{H}} f \|_{L^p_x(\R^d, L_t^2([0,T]))} \lesssim \| \langle \mathcal{H} \rangle^{-s} f \|_{L^2(\R^d)}.
\end{equation*}

\medskip

We end the article by pointing out the relevance of Conjecture \ref{conj:LocalSmoothing} for the corresponding Bochner--Riesz problem.
In Section \ref{section:BochnerRiesz} we argue how local smoothing estimates have implications for the Bochner--Riesz means of the Hermite operator:
\begin{equation*}
\mathcal{B}_{\lambda}^{\alpha}(\mathcal{H}) f = \big( 1 - \frac{\mathcal{H}}{\lambda} \big)^{\alpha}_+ f, \quad x_+ = \max(x,0).
\end{equation*} 
We consider estimates for $\alpha > 0$, which are uniform in $\lambda \geq 1$.
In one dimension this recovers the classical result due to Askey--Wainger \cite{AskeyWainger1965} up to endpoints, which states that for $p=4$ it holds $L^p$-boundedness of $\mathcal{B}^{\alpha}_{\lambda}(\mathcal{H})$ for any $\alpha > 0$. In higher dimensions Conjecture \ref{conj:LocalSmoothing} implies that the critical $L^p$-space, $p>2$, for $\mathcal{B}^{\alpha}_{\lambda}(\mathcal{H})$ to be bounded for $\alpha > 0$ is given by $p_c=\frac{2(d+1)}{d}$, which is lower than the one from the Euclidean Bochner--Riesz conjecture given by $p_c = \frac{2d}{d-1}$. This matches the recent examples of Lee--Ryu \cite{LeeRyu2022}, who showed that summability of the global Bochner--Riesz means for the Hermite operator deviates from the Euclidean summability index. We shall see that even for the phase function obtained by Lee--Ryu \cite{LeeRyu2022} for local Hermite Bochner--Riesz means a certain curvature condition \cite{Bourgain1991,GuoWangZhang2022} fails in general. This indicates by recent results of Guo--Wang--Zhang \cite{GuoWangZhang2022} that the summability properties for local Hermite Bochner--Riesz means are inferior to the ones for Euclidean Bochner--Riesz means as well.

\bigskip

\textbf{Basic notations:}
\begin{itemize}
\item $\mathcal{S}(\R^d)$ denotes the Schwartz functions, i.e.,
\begin{equation*}
\mathcal{S}(\R^d) = \{ f \in C^\infty(\R^d;\C) : \forall \alpha, \beta \in \N_0^d: \, \sup_{x \in \R^d} |x^\alpha \partial_x^\beta f(x) | < \infty \}.
\end{equation*}
$\mathcal{S}'(\R^d)$ denotes the space of tempered distributions.
\item $\cdot : \R^d \times \R^d \to \R$ denotes the scalar product in $\R^d$, i.e., $x \cdot y = \sum_{i=1}^d x_i y_i$.
\item 
$L^p_s$ denotes the $L^p$-based Sobolev space of order $s$ given by
\begin{equation*}
L^p_s(\R^d) = \{ f \in L^p(\R^d) : \langle D \rangle^s f \in L^p(\R^d) \}
\end{equation*}
with $\langle D \rangle^s$ denoting the Bessel potential.
\item Capital numbers $N,M,\ldots \in 2^{\N_0}$ typically denote dyadic numbers.
\item $P_N$ denotes the smooth projection in Fourier space to frequencies of size $N$.
\item $\mathfrak{P}_N$ denotes the projection to Hermite eigenfunctions with eigenvalues comparable to $N^2$.
\item A $\beta$-sector $S_\beta$ is a subset of the unit annulus with aperture $\beta$ and length $1$. Suppose that it is centered around the line with direction $\nu \in \mathbb{S}^{d-1}$. Then it admits the parametrization
\begin{equation*}
S_\beta = \{ \xi \in \R^d : |\xi| \in [1/2,2], \; \big| \frac{\xi}{|\xi|} - \nu \big| \leq \beta \}.
\end{equation*}
\item An $(\alpha,\beta)$-sector $S_{\alpha,\beta}$ is a subset of the unit annulus with aperture $\beta$ and radial length $\alpha$. Suppose that it is angularly centered at $\nu \in \mathbb{S}^{d-1}$ and radially at $r \in [1/2,2]$. Then it admits the parametrization
\begin{equation*}
S_{\alpha,\beta} = \{ \xi \in \R^d : |\xi| \in [1/2,2] \cap [r-\frac{\alpha}{2},r+\frac{\alpha}{2}], \; \big| \frac{\xi}{|\xi|} - \nu \big| \leq \beta \}.
\end{equation*}
\item $P_\theta$ denotes smooth Fourier projection to the set $\theta \subseteq \R^d$ (e.g. a $\beta$-sector).
\end{itemize}

\medskip

\emph{Outline of the paper.} In Section \ref{section:Preliminaries} we recall basic facts about Hermite eigenfunctions and spectral localization. 
In Section \ref{section:LinearizationReduction} we analyze the eikonal equation. Using arguments related to the proof of the Cauchy-Kowalevskaya theorem we obtain an analytic expansion. We shall see that we can linearize the phase function after which we obtain a Klein-Gordon-type propagation \eqref{eq:KleinGordonIntroduction}. In Section \ref{section:Knapp} we work out Knapp examples, which yield necessary conditions for the local smoothing estimate.
In Section \ref{section:ProofLocalSmoothing1d} we show the local smoothing estimates in one dimension given by Theorem \ref{thm:LocalSmoothing1d}. In Section \ref{section:ProofLocalSmoothingHigher} we show decoupling estimates for elliptic surfaces which are degenerate in the radial direction. Based on suitable decoupling estimates, we show Theorem \ref{thm:CompactlySupportedInitialData}, which recovers the local smoothing estimates for the wave propagation in case of essentially compactly supported initial data. Secondly we show Theorem \ref{thm:GeneralInitialData}, which covers the elliptic regime as well. Finally, in Section \ref{section:BochnerRiesz} we connect Conjecture \ref{conj:LocalSmoothing} to global Hermite Bochner--Riesz means and remark on local Hermite Bochner--Riesz means.

\section{Preliminaries}
\label{section:Preliminaries}
\subsection{Hermite eigenfunctions and spectral localization}

In the following we recall basic facts about the Hermite eigenfunctions (cf. \cite{AbramowitzStegun1964}). Note that $\mathcal{H}_d = - \Delta_d + |x|^2 = \sum_{j=1}^d (- \partial_j^2 + x_j^2)$ decomposes into $d$ commuting one-dimensional Harmonic oscillators. The one-dimensional oscillator $\mathcal{H}_1 = - \partial_x^2 + |x|^2$ has discrete spectrum 
\begin{equation*}
\sigma(\mathcal{H}_1) = \{ 2k + 1 \, : \, k \in \N_0 \}
\end{equation*}
with $L^2$-normalized eigenfunctions given by
\begin{equation*}
h_n(x) = (-1)^n (2^n n! \sqrt{\pi})^{-\frac{1}{2}} e^{\frac{x^2}{2}} \frac{d^n}{dx^n} e^{-x^2}.
\end{equation*}
Consequently, we obtain eigenfunctions to $\mathcal{H}_d$ by tensorization
\begin{equation*}
\mathfrak{h}_{\underline{n}}(x_1,\ldots,x_d) = h_{n_1}(x_1) \ldots h_{n_d}(x_d), \quad \underline{n} = (n_1,\ldots,n_d) \in \N_0^d.
\end{equation*}
These satisfy $\mathcal{H}_d \mathfrak{h}_{\underline{n}} = (2(n_1+\ldots+n_d) +d) \mathfrak{h}_{\underline{n}}$ such that we obtain
\begin{equation*}
\sigma(\mathcal{H}_d) = \{ 2k + d \, : \, k \in \N_0 \}.
\end{equation*}
But note that the eigenspaces are no longer one-dimensional except from the lowest eigenvalue $d$. For $s \geq 0$, powers $\mathcal{H}^s$ and the evolution $e^{it \mathcal{H}^s}:L^2(\R^d) \to L^2(\R^d)$ are defined by spectral calculus.

\medskip

For $k \in \sigma(\mathcal{H}_d)$ we define the orthonormal projection $\mathfrak{p}_k : L^2(\R^d) \to L^2(\R^d)$ to the eigenspace with eigenvalue $k$. For $N \in 2^{\N_0}$ we define
\begin{equation*}
\mathfrak{P}_N = \sum_{\substack{k \in \sigma(\mathcal{H}_d), \\
\frac{N^2}{4} \leq k \leq 4 N^2}} \mathfrak{p}_k, \qquad \mathfrak{P}_{\leq N} = \sum_{\substack{k \in \sigma(\mathcal{H}_d), \\ k \leq 4 N^2}} \mathfrak{p}_k.
\end{equation*}
Here $N$ refers to the spectral localization of $\sqrt{\mathcal{H}}$. 

\medskip

We define the standard frequency projection $P_N : L^2(\R^d) \to L^2(\R^d)$, $N \in 2^{\N_0}$, as Fourier multipliers. Let $\chi_0 \in C^\infty_c(B_d(0,2))$ be a radially decreasing function with $\chi_0(\xi) = 1$ for $|\xi| \leq 1$, and $\chi(\xi) = \chi_0(\xi/2) - \chi_0(\xi)$. We define for $N \in 2^{\N_0}$
\begin{equation*}
(P_N f) \widehat (\xi) = \chi_N(\xi) \hat{f}(\xi), \quad \chi_N(\xi) = \chi(\xi/N), \quad (P_0 f) \widehat(\xi) = \chi_0(\xi) \hat{f}(\xi).
\end{equation*}

\subsection{Estimates for low Hermite frequencies}
\label{subsection:LowHermiteFrequencies}
To avoid the singularity of the symbol $p(x,\xi)= \sqrt{\xi^2 + |x|^2}$ at the origin $x=\xi=0$, we shall only consider initial data with $\mathfrak{P}_{\leq C} f = 0$. The contribution of this part is clearly bounded in $L^p$:
\begin{equation*}
\begin{split}
\| \mathfrak{P}_{\leq C} e^{it \sqrt{\mathcal{H}}} f \|_{L^p} &\leq \sum_{k \leq C} \| \mathfrak{p}_k e^{it \sqrt{\mathcal{H}}} f \|_{L^p} \\
&\leq \sum_{k \leq C} \| \mathfrak{p}_k e^{it k} f \|_{L^p} \leq \sum_{k \leq C} \| \mathfrak{p}_k f \|_{L^p} \lesssim \|f \|_{L^p}.
\end{split}
\end{equation*}
In the ultimate estimate we used $L^p$-boundedness of $\mathfrak{p}_k$ for $k \leq C$, which is immediate from $h_m \in \mathcal{S}(\R)$, and that there are only finitely many projections $k \leq C$\footnote{The number depends on the dimension though.}. Consequently, we suppose in the remainder of the analysis that $f = \mathfrak{P}_{\geq C} f$. Let $\tilde{P}_{\lesssim 1} = \tilde{p}(x,D)$ denote a pseudo-differential operator which localizes in phase space to $\{ |x| + | \xi | \lesssim 1 \}$. We can estimate the contribution of the small frequencies as follows:
\begin{equation}
\label{eq:LowFrequencyDecomposition}
\begin{split}
\| \mathfrak{P}_{\geq C} e^{it \sqrt{\mathcal{H}}} \tilde{P}_{\lesssim 1} \mathfrak{P}_{\geq C} f \|_{L^p} &\leq \sum_{N \gg 1} \| \mathfrak{P}_N e^{it \sqrt{\mathcal{H}}} \tilde{P}_{\lesssim 1} \mathfrak{P}_{\geq C} f \|_{L^p} \\
&\lesssim \sum_{N,M \gg 1} N^m \| \mathfrak{P}_{N} \tilde{P}_{\lesssim 1} \mathfrak{P}_{M} f \|_{L^p}.
\end{split} 
\end{equation}
Since by \cite[Proposition~A.5]{BurqPoiretThomann2023} we have
\begin{equation*}
\| \mathfrak{P}_N \tilde{P}_{\lesssim 1} \|_{L^p \to L^p} + \| \tilde{P}_{\lesssim 1} \mathfrak{P}_N \|_{L^p \to L^p} \lesssim_m N^{-m},
\end{equation*}
and clearly $\| \mathfrak{P}_N \|_{L^p \to L^p} \lesssim N^{c(d)}$, we can bound
\begin{equation*}
\| \mathfrak{P}_{N} \tilde{P}_{\lesssim 1} \mathfrak{P}_{M} f \|_{L^p} \lesssim \max(N,M)^{-m} \| f \|_{L^p}.
\end{equation*}
Then the summation over $N$ and $M$ in \eqref{eq:LowFrequencyDecomposition} can be carried out easily.

\subsection{Existence of solutions to the eikonal equation}
\label{subsection:ExistenceEikonal}

By the above we shall only consider initial data which have Fourier support away from the origin. We summarize the properties of the approximate solution obtained from the Lax parametrix \cite{Lax1957} and composition of Fourier integral operators:
\begin{proposition}
Let $N \gg 1$ and $\text{supp}(\hat{f}) \subseteq A_N = B(0,2N) \backslash B(0,N/2)$. Then the solution to the Cauchy problem
\begin{equation}
\label{eq:CauchyProblemProposition}
\left\{ \begin{array}{cl}
\partial_t^2 u &= \Delta u - |x|^2 u, \quad (x,t) \in \R^d \times [0,1], \\
u(0) &= f, \quad \dot{u}(0) = 0
\end{array} \right.
\end{equation}
is given by
\begin{equation*}
u(t,x) = \sum_{i=1}^2 \int_{\R^d} e^{i \phi_i(x,t;\xi)} a_i(x,t;\xi) \hat{f}(\xi) d\xi + R_t f
\end{equation*}
where $\phi_i$ solve the eikonal equation:
\begin{equation}
\label{eq:EikonalEquationProposition}
\left\{ \begin{array}{cl}
\partial_t \phi_i(x,t;\xi) &= (-1)^{i+1} p(x,\nabla_x \phi_i), \\
\phi_i(x,0;\xi) &= x \cdot \xi
\end{array} \right.
\end{equation}
with $p(x,\xi)= \sqrt{|\xi|^2 + |x|^2}$. Moreover, $\| R_t \|_{L^p \to L^p} \lesssim 1$ uniformly in $t \in [0,1]$ and $a_i \in S^0$ uniformly in $N$ and $t$.
\end{proposition}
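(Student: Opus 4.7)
The plan is to use the classical Lax parametrix strategy for the half-wave propagators and then sum. Because $\dot u(0)=0$, the solution factors as
$$u(t,x) = \cos(t\sqrt{\mathcal{H}})f = \tfrac{1}{2}\bigl(e^{it\sqrt{\mathcal{H}}} + e^{-it\sqrt{\mathcal{H}}}\bigr) f,$$
so it suffices to build parametrices for each of $e^{\pm it\sqrt{\mathcal{H}}}$ separately; these correspond to the choices $i=1,2$ in the statement, and each amplitude $a_i$ starts from the initial datum $1/2$.

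The first step is to regard $\sqrt{\mathcal{H}}$, within the Shubin/Weyl calculus, as a pseudo-differential operator with principal symbol $p(x,\xi)=\sqrt{|x|^2+|\xi|^2}$. This symbol is smooth on $\{|x|^2+|\xi|^2\gtrsim 1\}$, and the hypothesis $\operatorname{supp}(\hat f)\subseteq A_N$ with $N\gg 1$, combined with conservation of $|x|^2+|\xi|^2$ along the Hamilton flow \eqref{eq:HamiltonFlow} and finite speed of propagation, keeps us away from the degenerate region for all $t\in[0,1]$ (subsection \ref{subsection:LowHermiteFrequencies} has already disposed of the small-frequency contribution). By subsection \ref{subsection:ExistenceEikonal} the eikonal equations \eqref{eq:EikonalEquationProposition} admit smooth solutions $\phi_1,\phi_2$ on $[0,1]$ with the required symbol estimates.

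Next, I would make the WKB ansatz
$$u_i(t,x) = \int e^{i\phi_i(x,t;\xi)} a_i(x,t;\xi)\,\hat f(\xi)\,d\xi,\qquad a_i \sim \sum_{j\geq 0} a_{i,j},\quad a_{i,j}\in S^{-j},$$
and apply $\partial_t\mp i\sqrt{\mathcal{H}}$ using H\"ormander's composition formula for $\psi$DOs acting on oscillatory integrals. Cancellation of the principal term is exactly the eikonal equation; the sub-principal and lower orders produce first-order linear transport equations for $a_{i,j}$ along the Hamilton flow, to be solved with $a_{i,0}(x,0;\xi)=1/2$ and $a_{i,j}(x,0;\xi)=0$ for $j\geq 1$. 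A Borel summation patches the formal series into an honest symbol $a_i\in S^0$, uniformly in $N$ and $t\in[0,1]$. Truncating at a large index $J$ leaves a remainder whose symbol lies in $S^{-J}$; composed with $e^{i\phi_i}$ this defines an operator with rapidly decaying Schwartz kernel, and by Schur's test (or Young's inequality) it is $L^p\!\to\!L^p$ bounded uniformly in $t$, giving the $R_t$ in the statement.

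The main obstacle is obtaining the $S^0$ bounds for $a_i$ \emph{uniformly} in the frequency scale $N$. Because $p(x,\xi)$ mixes $x$ and $\xi$, derivatives of $\phi_i$ in either variable feed through the transport equations into the amplitude, and naive estimates risk losing powers of $N$. The remedy is the slow-variation picture described around \eqref{eq:HamiltonFlow}: along the Hamilton flow neither $|\xi_t|$ nor the direction $\xi_t/|\xi_t|$ moves appreciably on time scale $1$ (the latter by a quantity of order $N^{-1/2}$), and $|x_t-x_0|\lesssim 1$. This $N$-independent control on the bicharacteristics is what forces $a_i\in S^0$ uniformly; verifying it rigorously via the analytic expansion for the eikonal in subsection \ref{subsection:ExistenceEikonal} is the technical heart of the construction.
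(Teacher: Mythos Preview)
Your proposal is correct and follows the same approach as the paper: the paper simply records this proposition as ``an instance of \cite[Theorem~5.5]{Rauch2012}'' (the Lax parametrix construction) and then devotes the remainder of Section~\ref{section:Preliminaries} to analyzing the eikonal solution and the amplitude, with the $N$-uniformity of $a_i\in S^0$ obtained via scaling considerations (Remark~\ref{rem:LongTimeExistence} and Subsection~\ref{subsection:Amplitude}) rather than the Hamilton-flow heuristic you sketch. Your outline of the WKB/transport hierarchy and the Schur-type remainder bound is exactly what \cite{Rauch2012} provides; the only point worth flagging is that the paper's route to uniformity in $N$ is the homogeneity/rescaling argument rather than direct bicharacteristic estimates, but the two viewpoints lead to the same conclusion.
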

This is an instance of \cite[Theorem~5.5]{Rauch2012}. We shall analyze the phase functions in detail below and remark on the amplitude function in Subsection \ref{subsection:Amplitude}.

\smallskip

Due to finite speed of propagation exhibited by the Hamilton flow, as well the spatial support as the frequency support is essentially preserved by the evolution of \eqref{eq:CauchyProblemProposition}. Secondly, by boundedness of $R_t$ in $L^p$ and symmetry of $\phi_1$ and $\phi_2$ established through time-reversal,
it suffices to analyze the Fourier integral operator given by
\begin{equation*}
u(t,x) = \int_{\R^d} e^{i \phi(x,t;\xi)} a(x,t;\xi) \hat{u}_0(\xi) d\xi
\end{equation*}
with $\phi$ a solution to \eqref{eq:EikonalEquationProposition} for $i=1$.

In the following we invoke the Cauchy-Kowalevskaya theorem to obtain solutions to the eikonal equation with $p(x,\xi) = \sqrt{|x|^2+ |\xi|^2}$:
\begin{equation}
\label{eq:EikonalExistence}
\left\{ \begin{array}{cl}
\partial_t \varphi(x,t;\xi) &= p(x,\nabla_x \varphi), \\
\varphi(x,0;\xi) &= x \cdot \xi.
\end{array} \right.
\end{equation}
We show the following:
\begin{proposition}[Existence~of~solutions~to~the~eikonal~equation]
\label{prop:ExistenceEikonal}
Let $(x,\xi) \in \R^{2d} \backslash 0$. There are analytic solutions to \eqref{eq:EikonalExistence} for $(x',t',\xi') \in \R^d \times \R \times \R^d$ in a ball $B((x,0,\xi),R)$ centered at $(x,0,\xi)$ of radius $R$ comparable to $|(x,\xi)|$.
\end{proposition}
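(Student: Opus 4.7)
The plan is to regard \eqref{eq:EikonalExistence} as a first-order scalar Cauchy problem of the form $\partial_{t'} \varphi = F(x', \nabla_{x'} \varphi)$ with $F(x', p') = \sqrt{|x'|^2 + |p'|^2}$, and to apply the Cauchy--Kowalevskaya theorem with $\xi'$ treated as an analytic parameter. The first observation is that $F$ is jointly real-analytic on $\R^{2d} \setminus \{0\}$, with analyticity radius around any base point $(x_0, p_0) \neq 0$ comparable to $|(x_0, p_0)|$ (the distance to the singular set where $\sqrt{\cdot}$ fails to be analytic). At $t' = 0$ the initial datum $x' \cdot \xi'$ gives $\nabla_{x'} \varphi(x', 0; \xi') = \xi'$, so the pair $(x', \nabla_{x'} \varphi)$ at the basepoint $(x, 0, \xi)$ equals $(x, \xi)$, which by hypothesis is nonzero. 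Since $F$ depends neither on $\varphi$ nor on the parameter $\xi'$, and the initial datum is entire in $(x', \xi')$, the standard majorant argument produces a joint analytic solution $\varphi(x', t'; \xi')$ in some neighborhood of $(x, 0, \xi)$.

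The second step is to quantify the radius $R$ by exploiting the scaling symmetry
\[
\varphi(x', t'; \xi') \longmapsto \alpha^{-2}\,\varphi(\alpha x', \alpha t'; \alpha \xi'), \qquad \alpha > 0,
\]
which one checks preserves both the eikonal equation (using the $1$-homogeneity of $p$) and the initial condition $x' \cdot \xi'$. Under this rescaling a local solution near $(x_0, 0, \xi_0)$ with existence radius $R_0$ corresponds to a local solution near $(\alpha^{-1} x_0, 0, \alpha^{-1} \xi_0)$ with existence radius $\alpha^{-1} R_0$. Therefore it suffices to produce a uniform radius $R_0 > 0$ at every basepoint on the unit sphere $\{(x, \xi) \in \R^{2d} : |(x, \xi)| = 1\}$ and then to rescale.

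Uniformity on this sphere will follow from compactness: the quantitative Cauchy--Kowalevskaya radius depends only on the analyticity radius of $F$ at the basepoint and on the sup-norm of $F$ on the corresponding polydisk, both of which are continuous functions of $(x, \xi) \in \R^{2d} \setminus \{0\}$, and hence bounded from above and below on the compact unit sphere. Combining this with the scaling above produces $R \sim |(x, \xi)|$ as claimed.

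The step I expect to require the most care is verifying that the parameterized Cauchy--Kowalevskaya construction genuinely yields joint analyticity in $(x', t', \xi')$, rather than merely analyticity in $(x', t')$ for each fixed $\xi'$. This is standard once one notes that in the majorant method the Taylor coefficients of $\varphi$ in $(x', t')$ are themselves polynomial in $\xi'$ through the initial datum, and that the majorant bound is uniform in $\xi'$ on a neighborhood of the basepoint; still, the domain of analyticity in $\xi'$ must be tracked explicitly to ensure it does not shrink below the scale in $(x', t')$ so that the single ball $B((x, 0, \xi), R)$ of radius $R \sim |(x,\xi)|$ suffices.
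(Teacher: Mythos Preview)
Your argument is correct and takes a genuinely different route from the paper's proof. The paper reduces the scalar eikonal equation to a first-order quasilinear system for $(\varphi,\nabla_x\varphi)$ and then runs the explicit majorant construction from Evans, writing down the concrete majorant $\tfrac{Cr}{r-\sum z_i}$ and its explicit solution $v^*$; the radius $R\sim|(x,\xi)|$ is then read off directly from the domain of analyticity of $v^*$. Your approach instead black-boxes Cauchy--Kowalevskaya at a single reference scale and obtains the quantitative radius from the scaling symmetry $\varphi\mapsto\alpha^{-2}\varphi(\alpha x',\alpha t';\alpha\xi')$ (which uses the $1$-homogeneity of $p$) together with compactness of the unit sphere in $\R^{2d}$. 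Both are valid; yours is more conceptual and avoids computation.

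What the paper's explicit approach buys is that the same majorant $\tilde p$ is reused later in the paper: in the asymptotic expansion of $\partial_t^k\varphi(x,0;\xi)$ (Subsection~\ref{subsection:Asymptotics}) and crucially in the proof of Lemma~\ref{lem:EstimatesRemainderTerm}, where the derivative bounds on $\mathcal{E}_N$ are obtained by comparing with the explicit majorant series. So the paper front-loads the constructive work here because it needs the machinery again. Your scaling argument, on the other hand, reappears in the paper as well (see Remark~\ref{rem:LongTimeExistence}), so the two methods are complementary rather than mutually exclusive. Your final paragraph about joint analyticity in $(x',t',\xi')$ is a fair concern; the cleanest way to dispatch it is to adjoin $\xi'$ as extra spatial variables with trivial evolution $\partial_{t'}\xi'=0$, so that the standard CK theorem directly yields joint analyticity without any separate bookkeeping.
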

\begin{proof}
This will be a consequence of the analytic domination argument in the proof of the Cauchy-Kowalevskaya theorem. We follow the presentation of Evans in \cite[Section~4.6]{Evans2010}.
Denote the phase-space variables with $z = (x,\xi) \in \R^{2d}$ and rewrite the eikonal equation as a linear system:
\begin{equation*}
\underline{\mathbf{u}} = 
\begin{pmatrix}
\varphi(x,t;\xi) \\ \partial_{x_1} \varphi(x,t;\xi) \\ \vdots \\ \partial_{x_n} \varphi(x,t;\xi)
\end{pmatrix}
= \begin{pmatrix}
\mathbf{u}_0 \\ \underline{\mathbf{u}}'
\end{pmatrix}
.
\end{equation*}
Note that $\underline{\mathbf{u}}$ has $m=n+1$ components.
The eikonal equation is equivalent to the linear system:
\begin{equation*}
\partial_t \underline{\mathbf{u}} = 
\begin{pmatrix}
p(x,\underline{\mathbf{u}}') \\
\nabla_{\xi} p(x,\underline{\mathbf{u}}') \cdot \partial_{x_1} \underline{\mathbf{u}}' + \partial_{x_1} p(x,\underline{\mathbf{u}}') \\
\vdots \\
\nabla_{\xi} p(x,\underline{\mathbf{u}}') \cdot \partial_{x_n} \underline{\mathbf{u}}' + \partial_{x_d} p(x,\underline{\mathbf{u}}')
\end{pmatrix}
.
\end{equation*}
This is rewritten concisely as
\begin{equation*}
\partial_t \underline{\mathbf{u}} = \sum_{j=1}^n \mathbf{B}_j(x,\underline{\mathbf{u}}') \partial_{x_j} \underline{\mathbf{u}}' + \underline{\mathbf{c}}(x,\underline{\mathbf{u}}').
\end{equation*}
The components of $\underline{\mathbf{c}}$ and $\mathbf{B}_j$ are given by $p$ and first order derivatives of $p$. After harmless linear translations, we can reduce to finding solutions to the homogeneous system centered at the origin.  The homogeneous system reads
\begin{equation*}
\left\{ \begin{array}{cl}
\partial_t \underline{\mathbf{u}} &= \sum_{j=1}^d \mathbf{B}_j(x,\underline{\mathbf{u}}') \partial_{x_j} \underline{\mathbf{u}} + \underline{\mathbf{c}}(x,\underline{\mathbf{u}}) \text{ for } |(z,t)| < r, \\
\underline{\mathbf{u}} &= 0 \text{ for } |z| < r.
\end{array} \right.
\end{equation*}
The radius of convergence of $\mathbf{B}_j$ and $\mathbf{c}$ depends on the centre $z_0 = (x_0,\xi_0)$ of the ball on which we solve the equation. Indeed, the radius of convergence of the real-valued scalar function $f(a) = \sqrt{r^2 + a^2}$ is comparable to $r$. We require an analytic majorization of $f(z) = \sqrt{(z-z_0)^2}$, $z \in \R^{2d}$. Note that this function has analyticity radius $r \sim |z_0|$. Let $D^\alpha f(0) = f_\alpha$. Moreover, we have the estimate
\begin{equation*}
|f_\alpha y^\alpha | \lesssim |z_0| =: C \text{ for } |y| \lesssim |z_0|.
\end{equation*}
This follows from writing
\begin{equation*}
f(z) = |z_0| \sqrt{\big( \frac{z}{|z_0|} - \frac{z_0}{|z_0|} \big)^2}.
\end{equation*}
Like in \cite{Evans2010} we find the analytic majorization:
\begin{equation*}
\left\{ \begin{array}{cl}
\partial_t \underline{\mathbf{u}}^* &= \frac{C r}{r- (x_1+\ldots+x_n+\xi_1+\ldots+\xi_n) - (u^{0*} + \ldots + u^{m*})} \big( \sum_{j,l} \mathbf{u}^{l *}_{j} + 1 \big), \\
\underline{\mathbf{u}}^* &= 0
\end{array} \right.
\end{equation*}
with explicit solution given by
\begin{equation*}
\underline{\mathbf{u}}^* = v^*(x,t;\xi) (1,\ldots,1)^t
\end{equation*}
for
\begin{equation*}
\begin{split}
v^*(x,t;\xi) &= \frac{1}{m(2n+1)} \big( r - (x_1+\ldots+x_n+\xi_1+\ldots+\xi_n+t) \\
&\quad - [(r-(x_1+\ldots+x_n+\xi_1+\ldots+\xi_n+t))^2 - 2 m(2n+1) C r t]^{\frac{1}{2}} \big).
\end{split}
\end{equation*}
This is analytic for $|(x,\xi)| \lesssim |(x_0,\xi_0)|$ and $|t| \lesssim |(x_0,\xi_0)|$. Consequently, for $|(x,\xi)| \gtrsim 1$, we obtain an analytic solution to
\begin{equation*}
\left\{ \begin{array}{cl}
\partial_t \varphi(x,t;\xi) &= p(x,\nabla_x \varphi), \quad (x,\xi) \in \R^{2d}, \\
\varphi(x,0;\xi) &= x \cdot \xi
\end{array} \right.
\end{equation*}
for $t \in [-c,c]$.
\end{proof}

\subsection{Asymptotics}
\label{subsection:Asymptotics}
In the following we use the analyticity to obtain asymptotics of the solution:
\begin{equation*}
\varphi(x,t;\xi) = x \cdot \xi + t \sqrt{|\xi|^2 + |x|^2} + \mathcal{E}(x,t;\xi).
\end{equation*}
Let $z=(x,\xi)$ for brevity.
Partial derivatives are denoted by $\partial_x^\alpha$,  $\partial_\xi^\beta$, $\alpha,\beta \in \N_0^d$, or $\partial_z^\gamma$, $\gamma \in \N_0^{2d}$. The Hessian is denoted by $\partial^2_{xx} \varphi = (\partial^2_{x_i x_j} \varphi)_{1 \leq i,j \leq d}$.
To analyze the remainder term, which is formally given by
\begin{equation*}
\mathcal{E}(x,t;\xi) =  \sum_{k \geq 2} \frac{t^k}{k!} \partial_t^k \varphi(x,0;\xi),
\end{equation*}
we iterate the equation and obtain:
\begin{equation}
\label{eq:SecondDerivativeEikonal}
\begin{split}
\partial_t^2 \varphi(x,t;\xi) &= \partial_\xi p(x,\nabla_x \varphi) \nabla_x \partial_t \varphi = \partial_\xi p(x,\nabla_x \varphi) \cdot \nabla_x (p(x,\nabla_x \varphi)) \\
&= \partial_{\xi} p(x,\nabla_x \varphi) \cdot [ \nabla_x p(x,\nabla_x \varphi) + \partial^2_{xx} \varphi \; \nabla_\xi p(x,\nabla_x \varphi) ].
\end{split}
\end{equation}
Note that $\partial_{xx}^2 \varphi (x,0;\xi) = 0$ and hence,
\begin{equation*}
\partial_t^2 \varphi(x,0;\xi)= \frac{x \cdot \xi}{|x|^2 + |\xi|^2}.
\end{equation*}

By induction, we can show the following:
\begin{lemma}
\label{lem:ExpansionEikonalTimeDerivatives}
For $k \geq 2$, we have the following representation of $\varphi$ solving \eqref{eq:EikonalEquationProposition}:
\begin{equation}
\label{eq:ExpansionEikonalSolution}
\partial_t^k \varphi(x,t;\xi) = \sum_{\substack{ N_1 =k, \\ 0 \leq N_2 \leq k-1}} \sum_{ (*)} c_{\underline{\alpha},\underline{\beta}} \, \prod_{i=1}^{N_1} \partial_z^{\alpha_i} p(x,\nabla_x \varphi) \, \prod_{j=1}^{N_2} \partial_x^{\beta_j} \varphi(x).
\end{equation}

The second sum ranges over
\begin{equation*}
\begin{split}
&(*) : (\alpha_i)_{i=1}^{N_1} \subseteq \N_0^{2d} \text{ and } (\beta_j)_{j=1}^{N_2} \subseteq \N_0^d: \\
&\quad \sum_{i=1}^{N_1} |\alpha_i| + \sum_{j=1}^{N_2} |\beta_j| = 2(k-1) + 2N_2, \quad \forall j: |\beta_j| \geq 2.
\end{split}
\end{equation*}

\end{lemma}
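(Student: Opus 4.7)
The plan is to prove the expansion by induction on $k \geq 2$, taking the base case $k=2$ directly from the explicit identity \eqref{eq:SecondDerivativeEikonal}. For the inductive step I apply $\partial_t$ termwise via Leibniz to a generic summand
\[
\prod_{i=1}^{N_1} (\partial_z^{\alpha_i} p)(x, \nabla_x \varphi) \prod_{j=1}^{N_2} \partial_x^{\beta_j} \varphi,
\]
and verify that every resulting term has the claimed form at level $k+1$. Only two elementary computations are needed: differentiating a $p$-factor gives, via the chain rule and $\partial_t \nabla_x \varphi = \nabla_x[p(x, \nabla_x \varphi)]$,
\[
\partial_t (\partial_z^{\alpha_i} p)(x,\nabla_x \varphi) = \sum_l (\partial_{\xi_l} \partial_z^{\alpha_i} p)(x,\nabla_x \varphi) \Bigl[(\partial_{x_l} p) + \sum_m (\partial_{\xi_m} p)\, \partial^2_{x_l x_m} \varphi\Bigr](x, \nabla_x \varphi),
\]
replacing one $p$-factor by two and optionally spawning one order-$2$ $\varphi$-factor; while differentiating a $\varphi$-factor $\partial_x^{\beta_{j_0}} \varphi$ produces $\partial_x^{\beta_{j_0}}[p(x, \nabla_x \varphi)]$, which the chain rule expands into a sum of terms each being the product of exactly one new $p$-factor and some new $\varphi$-factors of order $\geq 2$.

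For the bookkeeping it is useful to read the expansion of $\partial_x^{\beta_{j_0}}[p(x,\nabla_x \varphi)]$ as a sequence of $m := |\beta_{j_0}|$ elementary $\partial_x$-applications, each of one of three types: (a) increment the $p$-factor in the $x$-direction; (b) increment it in a $\xi$-direction and spawn an order-$2$ $\varphi$-factor; or (c) increment the order of an existing $\varphi$-factor. With counts $a+b+c = m$ (and $a+b \geq 1$ since the first action cannot be of type (c)), the resulting $p$-factor has $|\gamma| = a+b$ and the $b$ new $\varphi$-factors have orders summing to $2b+c$, each $\geq 2$. Feeding both subroutines into Leibniz and carrying out direct arithmetic then yields: (i) $N_1$ grows by exactly $1$ per $\partial_t$, so $N_1 = k$; (ii) the weight $W := \sum_i |\alpha_i| + \sum_j |\beta_j| - 2 N_2$ grows by exactly $2$ per $\partial_t$, hence $W = 2(k-1)$, which is the claimed degree relation; and (iii) every $|\beta_j| \geq 2$ is preserved.

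The main obstacle is the upper bound $N_2 \leq k-1$, since Case 2 can a priori produce up to $m$ new $\varphi$-factors. I would therefore strengthen the inductive hypothesis to the joint invariant
\[
2 N_2 + \max_j |\beta_j| \leq 2k,
\]
with the convention that the max is $0$ when $N_2 = 0$. This is immediate in the $p$-factor subroutine; in the $\varphi$-factor subroutine one uses that the new $\varphi$-factors have maximal order bounded by $2 + (m - a - b)$ together with the induction bound $m \leq 2k - 2 N_2^{\text{old}}$, followed by a short case split on whether the dominant $|\beta|$ in the new term comes from the surviving old factors or from the freshly spawned ones. Since every $|\beta_j| \geq 2$, the strengthened invariant forces $2 N_2 + 2 \leq 2k$, i.e., $N_2 \leq k-1$, completing the induction.
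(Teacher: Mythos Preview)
Your overall approach coincides with the paper's: induction on $k$, the base case from \eqref{eq:SecondDerivativeEikonal}, and Leibniz in the inductive step with a case split on whether $\partial_t$ hits a $p$-factor or a $\varphi$-factor. Where the paper invokes the auxiliary Lemma~\ref{lem:AuxiliaryDerivatives} for the $\varphi$-factor case, you inline that computation via your $(a,b,c)$ bookkeeping, which is equivalent. Your verification of $N_1=k$, of the degree relation $\sum|\alpha_i|+\sum|\beta_j|=2(k-1)+2N_2$, and of $|\beta_j|\ge 2$ is correct and matches the paper.

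You are also right that the bound $N_2\le k-1$ is the delicate point; the paper's own argument does not verify it explicitly. However, your proposed strengthening $2N_2+\max_j|\beta_j|\le 2k$ is \emph{not} inductively self-preserving, so the ``short case split'' cannot close. Concretely, a hypothetical term at level $k=4$ with $N_2=2$ and $|\beta_1|=|\beta_2|=3$ satisfies your invariant ($2\cdot 2+3=7\le 8$), yet applying your Case-2 move to $\beta_2$ with $(a,b,c)=(0,3,0)$ produces $N_2^{\mathrm{new}}=4$ and $M^{\mathrm{new}}=|\beta_1|=3$, hence $2\cdot 4+3=11>10=2(k+1)$. Your induction hypothesis alone does not exclude such a configuration; it happens to be ruled out only by the deeper structure of the recursion, which your invariant does not capture.

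The clean fix is to strengthen instead to
\[
\sum_{j=1}^{N_2}\bigl(|\beta_j|-1\bigr)\ \le\ k-1.
\]
In Case~1 this quantity increases by $0$ (subcase 1a) or $1$ (subcase 1b). In Case~2, using your own parametrization $a+b+c=m$, one computes
\[
\sum|\beta_j^{\mathrm{new}}|-N_2^{\mathrm{new}}
=\bigl(\sum|\beta_j|-m+2b+c\bigr)-\bigl(N_2-1+b\bigr)
=\bigl(\sum|\beta_j|-N_2\bigr)+(1-a),
\]
so the quantity increases by at most $1$. Thus the invariant is preserved, and since $|\beta_j|\ge 2$ gives $\sum_j(|\beta_j|-1)\ge N_2$, one obtains $N_2\le k-1$ immediately.
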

We remark that the terms with $N_2 = 0$ are most important, as these correspond to terms without factors of $\partial_x^\gamma \varphi$. For this reason these are the only contributing terms at $t=0$ as $|\beta_j| \geq 2$. 

As an auxiliary result we require the following:
\begin{lemma}
\label{lem:AuxiliaryDerivatives}
Let $\gamma \in \N_0^d$, $n_0 = |\gamma| \geq 1$. We have the following expansion for $\varphi$ a solution to \eqref{eq:EikonalEquationProposition}:
\begin{equation}
\label{eq:IdentityPhiDerivatives}
\partial_t \partial_x^{\gamma} \varphi(x,t;\xi) = \sum_{(**)} d_{\delta,\underline{\beta}'} \partial_z^{\delta} p(x,\nabla_x \varphi) \prod_{j=1}^{N_2'} \partial_x^{\beta'_j} \varphi.
\end{equation}
The second sum ranges over
\begin{equation*}
\begin{split}
&\quad (**) : \, \delta \in \N_0^{2d}:1 \leq |\delta| \leq n_0 \\ 
&\text{ and } (\beta'_j)_{j=1}^{N_2'} \subseteq \N_0^d: n_0 + 2N_2' = |\delta| + \sum_{j=1}^{N'_2} |\beta'_j|, \quad \forall j: 2 \leq |\beta'_j|.
\end{split}
\end{equation*}
\end{lemma}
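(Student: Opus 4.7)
The plan is to exploit the fact that $\varphi$ satisfies $\partial_t \varphi = p(x,\nabla_x \varphi)$ together with the analyticity obtained in Proposition \ref{prop:ExistenceEikonal}, so that mixed partials commute and
\[
\partial_t \partial_x^{\gamma} \varphi(x,t;\xi) = \partial_x^{\gamma}\bigl[ p(x,\nabla_x \varphi(x,t;\xi)) \bigr].
\]
With this reduction, the content of the lemma is a purely combinatorial identity about the $n_0$-fold Faà di Bruno expansion of the composition $p(x,\nabla_x \varphi)$ in the $x$-variable. I would prove it by induction on $n_0 = |\gamma|$.

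For the base case $n_0 = 1$, a single chain rule gives
\[
\partial_{x_i}\bigl[p(x,\nabla_x \varphi)\bigr] = \partial_{x_i} p(x,\nabla_x \varphi) + \sum_{l=1}^d \partial_{\xi_l} p(x,\nabla_x \varphi) \cdot \partial^2_{x_i x_l}\varphi.
\]
This yields two families of terms: one with $\delta$ a unit multi-index in the $x$-block and $N_2' = 0$, and one with $\delta$ a unit multi-index in the $\xi$-block, $N_2' = 1$, and $\beta_1' = e_i + e_l$. Both satisfy the constraint $n_0 + 2N_2' = |\delta| + \sum_j |\beta'_j|$ as well as the side conditions $1 \leq |\delta| \leq 1$ and $|\beta_1'| \geq 2$.

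For the inductive step, write $\gamma = \gamma' + e_k$ with $|\gamma'| = n_0$, and apply $\partial_{x_k}$ termwise to the expansion of $\partial_t \partial_x^{\gamma'} \varphi$ provided by the inductive hypothesis. A generic term has the form $\partial_z^\delta p(x,\nabla_x \varphi)\prod_{j=1}^{N_2'} \partial_x^{\beta_j'}\varphi$, and $\partial_{x_k}$ produces three kinds of descendants: (i) the derivative hits the explicit $x$-slot of $p$, incrementing $|\delta|$ by $1$ with all other quantities unchanged; (ii) the derivative passes through $p$ by the chain rule through some $\partial_{\xi_l}$, which increments $|\delta|$ by $1$ and appends a new factor $\partial^2_{x_k x_l}\varphi$, so that $N_2' \to N_2'+1$ with the new $|\beta'| = 2$; (iii) the derivative hits an existing factor $\partial_x^{\beta_j'}\varphi$, replacing $\beta_j'$ by $\beta_j' + e_k$. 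In each of the three cases, a direct arithmetic check shows that the updated constraint $(n_0+1) + 2 N_2'^{\mathrm{new}} = |\delta|^{\mathrm{new}} + \sum_j |\beta_j'^{\mathrm{new}}|$ holds whenever the inductive constraint held; the bounds $|\delta|^{\mathrm{new}} \leq n_0+1$ and $|\beta_j'^{\mathrm{new}}| \geq 2$ are manifestly preserved. Collecting like terms yields the claimed form with new coefficients $d_{\delta,\underline{\beta}'}$.

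The main obstacle is not conceptual but purely combinatorial: tracking how $|\delta|$, $N_2'$, and $\sum_j |\beta_j'|$ update simultaneously under the three branches of the Leibniz/chain rule. Once the update table is written out explicitly, the weight identity $n_0 + 2N_2' = |\delta| + \sum_j |\beta_j'|$ becomes manifestly invariant, and the same bookkeeping is essentially what underlies the parent formula \eqref{eq:ExpansionEikonalSolution} of Lemma \ref{lem:ExpansionEikonalTimeDerivatives}.
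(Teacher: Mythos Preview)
Your proposal is correct and follows essentially the same approach as the paper: both argue by induction on $n_0$, starting from $\partial_t\nabla_x\varphi = \nabla_x p + \partial^2_{xx}\varphi\,\nabla_\xi p$ and then applying an additional $\partial_{x_k}$ to the inductive representation, tracking the same three Leibniz/chain-rule branches. Your write-up is slightly more explicit in separating cases (i) and (ii) and in verifying the weight identity, but the argument is identical in substance.
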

\begin{proof}
This follows from induction on $n_0$ and analyticity of $\varphi$.
For $n_0=1$ observe
\begin{equation*}
\partial_t \nabla_x \varphi(x,t;\xi) = \nabla_x (p(x,\nabla_x \varphi)) = \nabla_x p(x,\nabla_x \varphi)+ \partial^2_{xx} \varphi(x,t;\xi) \nabla_{\xi} p(x,\nabla_x \varphi).
\end{equation*}

For the induction step we take a derivative in $x$ of the representation in \eqref{eq:IdentityPhiDerivatives}: When the derivative hits $\partial_z^\delta p(x,\nabla_x \varphi)$ we obtain
\begin{equation*}
\partial_x \partial_z^\delta p(x,\nabla_x \varphi) = (\partial_z^\delta \partial_x p)(x,\nabla_x \varphi) + \partial^2_{xx} \varphi \; (\partial_{\xi} \partial_z^\delta p)(x,\nabla_x \varphi) .
\end{equation*}
Clearly, the first term is accommodated by raising $|\delta| \to |\delta| + 1$. For the second term we have to raise $N_2' \to N_2' + 1$ as well and note that this term is covered in the representation $(**)$ for $|\gamma| = n_0 + 1$.

\smallskip

When the $x$ derivative hits the product $\prod_{j=1}^{N_2'} \partial_x^{\beta'_j} \varphi$, then the number $N_2'$ does not change but we need to raise one $\beta_j'$ by one. This is covered as well.
\end{proof}
Now we are ready to prove Lemma \ref{lem:ExpansionEikonalTimeDerivatives}.
\begin{proof}[Proof~of~Lemma~\ref{lem:ExpansionEikonalTimeDerivatives}]
For $k =2$ the above representation is valid as follows from \eqref{eq:SecondDerivativeEikonal}. Suppose it holds for $k \geq 2$. We check the representation for $k+1$ by taking the time derivative of \eqref{eq:ExpansionEikonalSolution}:
When the time derivative falls on the derivative of $p$, it follows
\begin{equation*}
\partial_t \partial_z^{\alpha_i} p(x,\nabla_x \varphi )= (\partial_\xi \partial_z^{\alpha_i} p)(x,\nabla_x \varphi) (\nabla_x p(x,\nabla_x \varphi) + \partial^2_{x} \varphi \nabla_\xi p(x,\nabla_x \varphi)).
\end{equation*}
To accommodate the first term in the expansion for $k+1$, we need to increase $N_1 \to N_1+1$ and $\sum_{i=1}^{N_1} |\alpha_i| \to \sum_{i=1}^{N_1} |\alpha_i| + 2$. To take the second term into account, increase $N_1 \to N_1+1$, $\sum_{i=1}^{N_1} |\alpha_i| \to \sum_{i=1}^{N_1} |\alpha_i| + 2$ and $N_2 \to N_2+1$, $\sum_{j=1}^{N_2} |\beta_j| \to \sum_{j=1}^{N_2} |\beta_j| + 2$.

We turn to the case when the time derivative hits $\partial_x^{\beta_j} \varphi$ with $|\beta_j| \leq 2k$. To ease notation, let $j=1$. In this case we can use Lemma \ref{lem:AuxiliaryDerivatives}:
\begin{equation*}
\partial_t \partial_x^{\beta_1} \varphi(x,t;\xi) = \sum_{(**)} d_{\underline{\delta},\underline{\beta}'} \partial_z^{\delta} p(x,\nabla_x \varphi) \prod_{j=1}^{N'_2} \partial_x^{\beta'_j} \varphi.
\end{equation*}
This means we have to increase $N_1 \to N_1 +1$ and $N_2 \to N_2 + N_2'$ and $_{i=1}^{N_1} |\alpha_i| \to _{i=1}^{N_1} |\alpha_i| + |\delta|$, $\sum_{j=1}^{N_2} |\beta_j| \to \sum_{j=2}^{N_2} |\beta_j| + \sum_{j'=1}^{N_2'} |\beta_{j'}|$.
By the induction assumption we have
\begin{equation*}
\sum_{i=1}^{N_1} |\alpha_i| + \sum_{j=1}^{N_2} | \beta_j| = 2(k-1) + 2N_2, \quad \forall j: |\beta_j| \geq 2.
\end{equation*}

It follows from Lemma \ref{lem:AuxiliaryDerivatives} that
\begin{equation*}
|\delta| + \sum_{j=1}^{N_2'} |\beta_j'| = |\beta_1| + 2 N_2', \quad \forall j: |\beta_{j}'| \geq 2.
\end{equation*}
Consequently,
\begin{equation*}
\begin{split}
\sum_{i=1}^{N_1} |\alpha_i| + |\delta| + \sum_{j=2}^{N_2} |\beta_j| + \sum_{j=1}^{N_2'} |\beta_j'| &= \sum_{i=1}^{N_1} |\alpha_i| +  \sum_{j=1}^{N_2} |\beta_j| + 2 N_2' \\ &= 2 k + 2(N_2-1) + 2N_2'
\end{split}
\end{equation*}
and for all $j$ in the above display, we have $|\beta_j|, |\beta_j'| \geq 2$. The proof is complete.

\end{proof}

Consequently, evaluating the expression at zero, we obtain
\begin{equation*}
\partial_t^k \varphi(x,0;\xi) = \sum_{1 \leq N_1 \leq k} \sum_{ \sum_{i=1}^{N_1} |\alpha_i| = 2(k-1) } c_{\underline{\alpha},\underline{0}} \prod_{i=1}^{N_1} \partial_z^{\alpha_i} p(x,\xi).
\end{equation*}
The terms with $N_2 \geq 1$ are clearly not contributing because $\partial_{x}^{\beta_j} \varphi(x,t;\xi) = 0$ for $|\beta_j| \geq 2$. For later purposes it will be very useful to have convergence of the modified series:
\begin{equation}
\label{eq:ModifiedQuantity}
B =  \sum_{k=2}^\infty \frac{t^k}{k!} b_k, \quad b_k = \sum_{1 \leq N_1 \leq k} \sum_{ \sum_{i=1}^{N_1} |\alpha_i| = 2(k-1) } c_{\underline{\alpha},\underline{0}} \prod_{i=1}^{N_1} |\partial_z^{\alpha_i} p(x,\xi)|.
\end{equation}

To this end, we solve the eikonal equation with different symbol $\tilde{p}(x,\xi)$:
\begin{equation}
\label{eq:ModifiedEikonalEquation}
\left\{ \begin{array}{cl}
\partial_t \tilde{\varphi}(x,t;\xi) &= \tilde{p}(x,\nabla_x \tilde{\varphi}), \quad (x,t,\xi) \in \R^d \times \R \times \R^d, \\
\tilde{\varphi}(x,0;\xi) &= x \cdot \xi .
\end{array} \right.
\end{equation}
We choose $\tilde{p}$ to be a majorization of $p$ around $(x_0,\xi_0)$ with analyticity radius comparable to $r = |(x_0,\xi_0)|$. Then we use the above arguments, rewriting the eikonal equation as a linear system to obtain an analytic solution around $(x_0,0,\xi_0)$ with analyticity radius comparable to $r$. This solution can be expanded as
\begin{equation*}
\tilde{\varphi}(x,t;\xi) = \langle x, \xi \rangle + t \tilde{p}(x,\xi) + \sum_{k \geq 2} \frac{t^k}{k!} \sum_{1 \leq N_1 \leq k} \sum_{ \sum_{i=1}^{N_1} |\alpha_i| = 2(k-1) } c_{\underline{\alpha},\underline{0}} \prod_{i=1}^{N_1} \partial_z^{\alpha_i} \tilde{p}(x,\xi).
\end{equation*}
Consequently, the solution $\tilde{\varphi}$ to \eqref{eq:ModifiedEikonalEquation} yields the convergence of \eqref{eq:ModifiedQuantity}.

\bigskip
\begin{remark}
\label{rem:LongTimeExistence}
Finally, we remark that the solution to the eikonal equation exists for $t \in [-1,1]$ choosing the frequency large enough. This follows from rewriting by homogeneity
\begin{equation*}
\begin{split}
\varphi_N(x',t';\xi') &= x' \cdot \xi' + t' \sqrt{|\xi'|^2 + |x'|^2} \\
&\quad + \sum_{k \geq 2} \frac{(t')^k}{N^k k!} \sum_{\sum_{i=1}^k |\alpha_i| = 2(k-1)} c_{\underline{\alpha},\underline{0}} N^{2-k} \prod_{i=1}^k \partial_z^{\alpha_i} p(N^{-2} x', \xi').
\end{split}
\end{equation*}
By the analysis of the previous section the expression
\begin{equation*}
\sum_{k \geq 2} \frac{(t'')^k}{k!} \sum_{\sum_{i=1}^d |\alpha_i| = 2(k-1)} c_{\underline{\alpha},\underline{0}} \prod_{i=1}^k \partial_z^{\alpha_i} p(N^{-2} x', \xi')
\end{equation*}
converges for $|t''| \lesssim 1$. This means $\varphi_N(x',t';\xi')$ exists for times $|t'| \lesssim N^2$.
 This yields existence of $\varphi_N(x',t';\xi')$ for $|t'| \leq N$ choosing $N$ large enough.
\end{remark}

\subsection{Amplitude}
\label{subsection:Amplitude}
The amplitude is obtained by successively solving transport equations. The precise form of the amplitudes is not important in the following, and we refer to \cite[Section~5.5]{Rauch2012} for the construction. We shall need that $a_i \in S^0$ uniformly in $|(x,\xi)| \gtrsim 1$ and existence of $a_i$ for times $|t| \gtrsim 1$. This follows again from the scaling considerations.

\section{Linearization and Reduction to Klein-Gordon propagation}
\label{section:LinearizationReduction}
\subsection{Linearization}
\label{subsection:Linearization}
We consider $|t| \lesssim 1$, and $|x-x_0| \lesssim 1$ by finite speed of propagation.
We suppose that $\{ |\xi| \sim N \}$. To facilitate the linearization, we normalize the frequencies to the unit annulus and correspondingly, rescale $x$ and $t$ by $N$. Let
\begin{equation*}
\varphi(x,t;\xi) = x \cdot \xi + t \sqrt{|\xi|^2 + |x|^2} + \sum_{k \geq 2} \frac{t^k}{k!} \partial_t^k \varphi(x,0;\xi).
\end{equation*}
We define
\begin{equation*}
\begin{split}
&\quad \varphi_N(x',t';\xi') \\
&= \varphi(N^{-1} x', N^{-1} t', N \xi') \\
&= x' \cdot \xi' + N^{-1} t' \sqrt{N^2 |\xi'|^2 + N^{-2} |x'|^2} + \sum_{k \geq 2} \frac{(t')^k}{N^k k!} (\partial_t^k \varphi) (N^{-1} x',0; N \xi') \\
&= x' \cdot \xi' + t' \sqrt{|\xi'|^2 + N^{-4} |x'|^2} + \sum_{k \geq 2} \frac{(t')^k}{N^k k!} (\partial_t^k \varphi)(N^{-1} x',0;N \xi').
\end{split}
\end{equation*}
After rescaling, we have $\{|\xi| \sim 1\}$ and $|t| \lesssim N$. Invoking finite speed of propagation, we can actually suppose that $\{|x-x_0| \lesssim N \}$. 

\medskip

To find size and regularity estimates for
\begin{equation*}
\partial_t^k \varphi(x,0;\xi) = \sum_{\sum_{i=1}^{k} |\alpha_i| = 2(k-1)} c_{\underline{\alpha},\underline{0}} \prod_{i=1}^{k} \partial_z^{\alpha_i} p(N^{-1} x', N \xi'),
\end{equation*}
we take advantage of homogeneity of $p$. Write $p(z) = |z|$, $z \in \R^{2d}$. Note that $\partial_z^\alpha p$ is homogeneous of order $1-|\alpha|$, and we have the simple estimate
\begin{equation}
\label{eq:DerivativesModulus}
|\partial_z^\alpha p(z) | \leq 2 |\alpha| \text{ for } |z| \gtrsim 1 \text{ and } |\alpha| \geq 1. 
\end{equation}

By homogeneity we find
\begin{equation*}
\partial_z^\alpha p(N^{-1} x', N \xi') = N^{1-|\alpha|} \partial_z^\alpha p(N^{-2} x', \xi'),
\end{equation*}
and we obtain from taking the product by the sum constraint on $\alpha_i$:
\begin{equation*}
\begin{split}
\prod_{i=1}^{k} \partial_z^{\alpha_i} p(N^{-1} x', N \xi') &= N^{k-\sum_{i=1}^k |\alpha_i|} \prod_{i=1}^{k} \partial_z^{\alpha_i} p(N^{-2} x', \xi') \\
&= N^{2-k} \prod_{i=1}^{k} \partial_z^{\alpha_i} p(N^{-2} x', \xi').
\end{split}
\end{equation*}

Next, we shall check on which domain we can linearize $\varphi_N(x,t;\xi)$. By linearization we mean an expansion of the oscillatory integral into Fourier extension operators. Note that the maximal domain we need to consider is $|x-x_0| \lesssim N$, $|t| \lesssim N$ by finite speed of propagation. We decompose:
\begin{equation*}
\varphi_N(x,t;\xi) = x \cdot \xi + t \sqrt{|\xi|^2 + |x_0|^2 / N^4} + \mathcal{E}_N(x,t;\xi)
\end{equation*}
with
\begin{equation*}
\begin{split}
\mathcal{E}_N(x,t;\xi) &= t \big( \sqrt{|\xi|^2 + |x|^2/N^4} - \sqrt{|\xi|^2 + |x_0|^2 / N^4} \big) \\
&\quad + \sum_{k \geq 2} \frac{N^{2(1-k)} t^k}{k!} (\partial_t^k \varphi)(N^{-2} x, 0;\xi) \\
&=: \mathcal{E}_{N1}(x,t;\xi) + \mathcal{E}_{N2}(x,t;\xi).
\end{split}
\end{equation*}

For $\mathcal{E}_N(x,t;\xi)$ we obtain boundedness of the derivatives:

\begin{lemma}
\label{lem:EstimatesRemainderTerm}
Let $N \gg 1$. With notations from above, the following estimate holds:
\begin{equation}
\label{eq:RegularityHigherOrder}
\sup_{\substack{x \in \R^d, \, |\xi| \sim 1, \\ |t| \lesssim N, \, |x-x_0| \lesssim N}} |\partial_\xi^\alpha \mathcal{E}_N(x,t;\xi) | \lesssim_{\alpha,d} 1 \text{ for } 0 \leq |\alpha| \leq d+1.
\end{equation}
\end{lemma}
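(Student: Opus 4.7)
The plan is to bound $\mathcal{E}_{N1}$ and $\mathcal{E}_{N2}$ separately, each by $O(1)$ in $C^{d+1}$ with respect to $\xi$, uniformly over the stated range of $(x,x_0,t)$ and in $|\xi| \sim 1$.

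For the linearized principal-symbol piece, I would write $\mathcal{E}_{N1}(x,t;\xi) = t\bigl[G(\xi, |x|/N^2) - G(\xi, |x_0|/N^2)\bigr]$ with $G(\xi,a) := \sqrt{|\xi|^2 + a^2}$. The key observation is that for $|\xi| \sim 1$ the mixed partial $\partial_a \partial_\xi^\alpha G(\xi, a)$ is uniformly bounded in $a \geq 0$: every $\xi$-derivative of $G$ is a rational expression whose denominator is a positive power of $G$, and $\partial_a$ brings down either a factor $a/G \leq 1$ or a decaying power $G^{-k}$. The fundamental theorem of calculus in $a$ between $|x_0|/N^2$ and $|x|/N^2$, together with $\bigl||x|-|x_0|\bigr| \leq |x-x_0| \lesssim N$, then yields $|\partial_\xi^\alpha \mathcal{E}_{N1}| \lesssim |t| \cdot N / N^2 \lesssim 1$. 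This argument is uniform in $|x_0|$, which matters because the lemma imposes no a priori bound on $|x_0|$.

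For the higher-order remainder I would use the representation of Lemma~\ref{lem:ExpansionEikonalTimeDerivatives} evaluated at $t=0$, writing $(\partial_t^k \varphi)(N^{-2}x,0;\xi)$ as a sum indexed by multi-indices $(\gamma_i)_{i=1}^{N_1}$ with $\sum_i |\gamma_i| = 2(k-1)$ of products $\prod_i \partial_z^{\gamma_i} p(N^{-2}x, \xi)$. Since $|(N^{-2}x, \xi)| \geq |\xi| \gtrsim 1$, the bound $|\partial_z^{\gamma} p(z)| \leq 2|\gamma|$ from \eqref{eq:DerivativesModulus} applies to every factor, and $\xi$-differentiation up to order $|\alpha| \leq d+1$ only adds, via Leibniz, polynomially many additional factors of the same form (each $|\gamma_i|$ growing by at most $|\alpha|$). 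The combinatorial sum that results is dominated by the majorant $b_k$ of \eqref{eq:ModifiedQuantity} times a factor polynomial in $k$ depending on $|\alpha|$; by the Cauchy--Kowalevskaya argument of Subsection~\ref{subsection:Asymptotics}, the series $\sum_k (t'')^k b_k/k!$ converges for $|t''| \leq r_0$ with $r_0 > 0$ independent of $N, x, \xi$, so $b_k \leq M k! \, r_0^{-k}$. Combining this with the crucial scaling gain $|t|^k/N^{2(k-1)} \leq N^{2-k}$ for $|t| \leq N$, the full series is dominated by
\[
C_\alpha \, N^2 \sum_{k \geq 2} k^{|\alpha|} (Nr_0)^{-k} \lesssim 1
\]
once $N$ is sufficiently large.

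The main obstacle will be justifying termwise $\xi$-differentiation of the formal Taylor expansion for $\mathcal{E}_{N2}$ while keeping the majorant estimates uniform in $x$, $x_0$, and $\xi$; this is exactly what the Cauchy--Kowalevskaya majorization in the proof of Proposition~\ref{prop:ExistenceEikonal} supplies, since the analyticity radius does not degenerate for large $|x_0|$ thanks to the rescaling $N^{-2}x_0$ that keeps $|(N^{-2}x_0, \xi)| \gtrsim 1$.
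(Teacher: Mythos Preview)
Your proposal is correct and follows essentially the same strategy as the paper. For $\mathcal{E}_{N1}$ you invoke the fundamental theorem of calculus in the auxiliary variable $a=|x|/N^2$ and bound $\partial_a\partial_\xi^\alpha G$ uniformly (which is immediate from homogeneity of degree $-|\alpha|$ of $\partial_a\partial_\xi^\alpha G$ on $\{|(\xi,a)|\gtrsim 1\}$); the paper instead rationalizes the difference of square roots algebraically, but the content and the resulting $O(|t|\cdot N^{-1})$ bound are identical. For $\mathcal{E}_{N2}$ both arguments run Leibniz's rule on the product $\prod_i \partial_z^{\gamma_i}p$, pass to the analytic majorant $\tilde p$ from Subsection~\ref{subsection:Asymptotics}, use the Cauchy estimate $b_k\le M\,k!\,r_0^{-k}$ coming from convergence of the majorant series, and finish with the scaling gain $N^{2-2k}|t|^k\le N^{2-k}$. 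The one place where the paper is more explicit than your sketch is the claim that $\xi$-differentiation produces only a factor polynomial in $k$: the paper verifies this by computing directly with the concrete majorant $\tilde p(z)=Cr/(r-\sum z_i)$, obtaining $\partial_{\xi_m}^{j}\partial_z^{\gamma}\tilde p \le (|\gamma|+1)\cdots(|\gamma|+j)\,4^{j}\,\partial_z^{\gamma}\tilde p \lesssim (2k+d+1)^{j}\,\partial_z^{\gamma}\tilde p$, which is exactly the step you would need to write out.
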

\begin{proof}
The linear term in $t$ is estimated by noticing
\begin{equation*}
\begin{split}
\sqrt{|\xi|^2 + |x|^2 / N^4} - \sqrt{|\xi|^2 + |x_0|^2 / N^4} &= \frac{(|x|^2 - |x_0|^2)/N^4}{\sqrt{|\xi|^2 + |x|^2 / N^4} + \sqrt{|\xi|^2 + |x_0|^2/N^4}} \\
&= \frac{(|x| - |x_0|)(|x|+ |x_0|)/N^4}{\sqrt{|\xi|^2 + |x|^2 / N^4} + \sqrt{|\xi|^2 + |x_0|^2/N^4}}.
\end{split}
\end{equation*}
Under our assumptions on $x$, we have
\begin{equation*}
\big| |x|-|x_0| \big| \leq |x-x_0| \lesssim N
\end{equation*}
and the following estimate is likewise easy to see:
\begin{equation*}
\frac{|x|+ |x_0|}{\sqrt{|\xi|^2 + |x|^2 / N^4} + \sqrt{|\xi|^2 + |x_0|^2/N^4}} \lesssim N^2.
\end{equation*}
We find
\begin{equation*}
\big| t \big( \sqrt{|\xi|^2 + |x|^2/N^4} - \sqrt{|\xi|^2 + |x_0|^2 / N^4} \big) \big| \lesssim 1.
\end{equation*}
The following estimate for derivatives $\alpha \in \N_0^d$, $ |\alpha| \leq \N_0^d$ follows from similar considerations:
\begin{equation*}
\big| \partial_{\xi}^\alpha  \big( t ( \sqrt{|\xi|^2 + |x|^2/N^4} - \sqrt{|\xi|^2 + |x_0|^2 / N^4} ) \big) \big| \lesssim_{\alpha,d} 1.
\end{equation*}

We turn to the estimate
\begin{equation*}
\big| \partial_{\xi_i}^{\ell} \mathcal{E}(x',t';\xi') \big| \lesssim_{n,\ell} 1.
\end{equation*}
We shall obtain an estimate
\begin{equation}
\label{eq:MultinomialEstimate}
\big| \partial_{\xi_m}^{\ell} \big( \prod_{i=1}^k \partial_z^{\alpha_i} p(N^{-2} x', \xi') \big) \big| \leq C(\ell,k,d) \prod_{i=1}^k \partial_z^{\alpha_i} \tilde{p}(N^{-2} x', \xi')
\end{equation}
with $\tilde{p}$ denoting the analytic majorization of $p$ at $(N^{-2} x', \xi')$.

Recall the Leibniz rule for higher derivatives
\begin{equation*}
\big( \prod_{i=1}^k f_i \big)^{(\ell)} = \sum_{j_1 + \ldots + j_k = \ell} \binom{n}{j_1,\ldots,j_k} \big[ \prod_{i=1}^k \partial^{j_i} f_i \big]
\end{equation*}
denoting the multinomial coefficients as
\begin{equation*}
\binom{n}{j_1,\ldots,j_k} = \frac{n!}{j_1! \ldots j_k!}.
\end{equation*}
Now we can estimate the modulus in \eqref{eq:MultinomialEstimate} using the analytic majorization:
\begin{equation*}
\big| \partial_{\xi_m}^{\ell} \big( \prod_{i=1}^k \partial_{z}^{\alpha_i} p(N^{-2} x', \xi') \big) \big| \leq \sum_{j_1 + \ldots+ j_k = \ell} \binom{n}{j_1,\ldots,j_k} \prod_{i=1}^k \partial^{j_i}_{\xi_m} \partial_z^{\alpha_i} \tilde{p}(N^{-2} x', \xi'). 
\end{equation*}

With the analytic majorization given by
\begin{equation*}
\frac{Cr}{r - \sum_{i=1}^{2d} z_i} = C \sum_{\alpha \in \N_0^{2d}} r^{-|\alpha|} z_i^\alpha, \qquad r \geq \frac{1}{4},
\end{equation*}
we have the following estimate:
\begin{equation*}
\begin{split}
 \partial_{\xi_{m}}^{j_i} \partial_z^{\alpha_i} \tilde{p}(N^{-2} x',\xi')  &\leq (|\alpha_i|+1) \ldots (|\alpha_i| + j_i) 4^{j_i} \partial_z^{\alpha_i} \tilde{p}(N^{-2} x', \xi') \\
 &\leq 4^{j_i} (k+d+1)^{j_i} \partial_z^{\alpha_i} \tilde{p}(N^{-2} x', \xi').
\end{split}
\end{equation*}
Here we use that $|\alpha_j| \leq k$ since $\sum_{j=1}^k |\alpha_j| = 2(k-1)$ and $j_i \leq \ell \leq d+1$.

Above we used the estimate
\begin{equation*}
(|\alpha_i|+1) \ldots (|\alpha_i| + j_i) \leq (k+d+1)^{j_i},
\end{equation*}
and it follows from summing the multinomial coefficients:
\begin{equation*}
\begin{split}
&\quad \sum_{j_1 + \ldots+ j_k = \ell} \binom{\ell}{j_1,\ldots,j_k} \prod_{i=1}^k \partial_{\xi_{\ell}}^{j_i} \partial_z^{\alpha_i} \tilde{p}(N^{-2} x', \xi') \\
 &\leq \sum_{j_1+\ldots+j_k = \ell} \binom{\ell}{j_1,\ldots,j_k} \prod_{i=1}^k \big[ 4^{j_i} (|\alpha_i|+1) \ldots (|\alpha_i + j_i) \partial_z^{\alpha_i} \tilde{p}(N^{-2} x', \xi') \big] \\
&\leq \sum_{j_1+\ldots+j_k = \ell} \binom{\ell}{j_1,\ldots,j_k} 4^{\ell} (k+1+d)^{\ell} \prod_{i=1}^\ell \partial_z^{\alpha_i} \tilde{p}(N^{-2} x', \xi') \\
&\leq 4^{2 \ell} (k+1+d)^{\ell} \prod_{i=1}^\ell \partial_z^{\alpha_i} \tilde{p}(N^{-2} x', \xi').
\end{split}
\end{equation*}
Clearly, for $k \leq d+10$ we have
\begin{equation*}
4^{2d} (k+d+1)^{d+1} \leq C_d.
\end{equation*}
For $k \geq d+10$ and choosing $N \geq 2^{2d}$, we obtain
\begin{equation*}
\frac{k^{d+1}}{N^{k-2}} \leq \frac{k^{d+1}}{N^{\frac{k}{2}}} = \frac{k^{d+1}}{2^{dk}} \to 0 \text{ as } k \to \infty.
\end{equation*}

\end{proof}

By means of \eqref{eq:RegularityHigherOrder} we obtain a Fourier series expansion:
\begin{lemma}
Let $\chi \in C^\infty_c(B(0,2))$. The following expansion holds:
\begin{equation}
\label{eq:FourierSeriesHigherOrder}
e^{i \mathcal{E}_N(x,t;\xi)} \chi(\xi) = \sum_{k \in \Z^d} \alpha_k(x,t) e^{i \xi \cdot k} \chi(\xi)
\end{equation}
with the estimate
\begin{equation}
\label{eq:FourierCoefficientEstimate}
|\alpha_k(x,t)| \lesssim_d (1+|k|)^{-(d+1)}
\end{equation}
uniform in $(x,t)$.
\end{lemma}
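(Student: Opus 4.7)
The plan is to expand $e^{i\mathcal{E}_N(x,t;\xi)}$ as a Fourier series on the torus $\T^d=\R^d/(2\pi\Z)^d$ after smoothly localizing with a compactly supported cutoff, and then to read off the coefficient decay by integrating by parts against the uniform $C^{d+1}$-bounds provided by Lemma \ref{lem:EstimatesRemainderTerm}. Concretely, fix an auxiliary cutoff $\tilde\chi\in C_c^\infty((-\pi,\pi)^d)$ with $\tilde\chi\equiv 1$ on $B(0,2)$; this is possible because $B(0,2)\Subset(-\pi,\pi)^d$ (since $2<\pi$). For each admissible $(x,t)$, the map $\xi\mapsto e^{i\mathcal{E}_N(x,t;\xi)}\tilde\chi(\xi)$ is smooth and compactly supported inside $(-\pi,\pi)^d$, hence descends to a smooth function on $\T^d$ admitting the convergent Fourier series
$$e^{i\mathcal{E}_N(x,t;\xi)}\tilde\chi(\xi)=\sum_{k\in\Z^d}\alpha_k(x,t)\,e^{ik\cdot\xi},\qquad\alpha_k(x,t)=(2\pi)^{-d}\!\int_{[-\pi,\pi]^d}e^{i\mathcal{E}_N(x,t;\xi)}\tilde\chi(\xi)\,e^{-ik\cdot\xi}\,d\xi.$$
Multiplying through by $\chi$ and using $\chi\tilde\chi=\chi$ yields the identity \eqref{eq:FourierSeriesHigherOrder}.

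The next step is to verify a uniform $C^{d+1}$-bound for the integrand. By the Leibniz rule and the Faà di Bruno formula, $\partial_\xi^\alpha(e^{i\mathcal{E}_N}\tilde\chi)$ is a finite linear combination of products of the unimodular factor $e^{i\mathcal{E}_N}$, derivatives of the compactly supported function $\tilde\chi$, and factors $\partial_\xi^{\beta_i}\mathcal{E}_N$ with $|\beta_i|\leq|\alpha|\leq d+1$. Since $|e^{i\mathcal{E}_N}|=1$ and $\tilde\chi$ together with all its derivatives is smooth and compactly supported, and since Lemma \ref{lem:EstimatesRemainderTerm} controls $\partial_\xi^{\beta_i}\mathcal{E}_N$ uniformly in the admissible range of $(x,t)$ for every $|\beta_i|\leq d+1$, I would conclude
$$\sup_{\xi\in\R^d}\bigl|\partial_\xi^\alpha\bigl(e^{i\mathcal{E}_N(x,t;\xi)}\tilde\chi(\xi)\bigr)\bigr|\lesssim_{\alpha,d}1\quad\text{for every }|\alpha|\leq d+1,$$
uniformly in $(x,t)$.

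The bound \eqref{eq:FourierCoefficientEstimate} then follows by integration by parts on the torus. The mode $k=0$ is handled by the trivial estimate $|\alpha_0(x,t)|\leq\|e^{i\mathcal{E}_N}\tilde\chi\|_\infty\lesssim 1$. For $k\in\Z^d\setminus\{0\}$, pick $j$ with $|k_j|=\|k\|_\infty\gtrsim_d 1+|k|$ and integrate by parts $(d+1)$ times in $\xi_j$; the compact support of $\tilde\chi$ inside $(-\pi,\pi)^d$ kills all boundary contributions and leaves
$$\alpha_k(x,t)=(ik_j)^{-(d+1)}(2\pi)^{-d}\!\int_{[-\pi,\pi]^d}\partial_{\xi_j}^{d+1}\bigl(e^{i\mathcal{E}_N(x,t;\xi)}\tilde\chi(\xi)\bigr)\,e^{-ik\cdot\xi}\,d\xi.$$
The previous uniform $C^{d+1}$-bound then yields $|\alpha_k(x,t)|\lesssim_d|k_j|^{-(d+1)}\lesssim_d(1+|k|)^{-(d+1)}$. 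The only non-trivial ingredient is the uniform derivative estimate on $e^{i\mathcal{E}_N}\tilde\chi$, which is exactly what Lemma \ref{lem:EstimatesRemainderTerm} was designed to supply; the rest is routine Fourier analysis on the torus.
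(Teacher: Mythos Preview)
Your proof is correct and follows essentially the same approach as the paper: introduce an auxiliary cutoff $\tilde\chi$ with $\chi\tilde\chi=\chi$, periodize $e^{i\mathcal{E}_N}\tilde\chi$ and expand it as a Fourier series on $\T^d$, then obtain the coefficient decay by integrating by parts $d+1$ times against the uniform $\xi$-derivative bounds of Lemma~\ref{lem:EstimatesRemainderTerm}. The paper's own proof is a terse two-sentence sketch of exactly this argument; your version simply spells out the Fa\`a di Bruno/Leibniz step and the integration by parts explicitly.
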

\begin{proof}
Let $\tilde{\chi} \in C^\infty_c(B(0,2))$ denote a bump function with mildly enlarged support such that $\chi \tilde{\chi} = \chi$ and $\T = \R / (2 \pi \Z)$. We consider the periodization of $\tilde{\chi}(\xi) e^{i \mathcal{E}_N(x,t;\xi)}$, which is expanded as a Fourier series:
\begin{equation*}
\big( \tilde{\chi}(\xi) e^{i \mathcal{E}_N(x,t;\xi)} \big)_{\text{per}}  = \sum_{k \in \Z^d} \alpha_k(x,t) e^{i \xi \cdot k}
\end{equation*}
with Fourier coefficients given by
\begin{equation*}
\int_{\T^d} e^{-i \xi \cdot k } e^{i \mathcal{E}_N(x,t;\xi)} \tilde{\chi}(\xi) d \xi = \alpha_k(x,t).
\end{equation*}
The estimate \eqref{eq:FourierCoefficientEstimate} is immediate from integration by parts and Lemma \ref{lem:EstimatesRemainderTerm}.
\end{proof}

\subsection{Reduction to Klein-Gordon phase function}

Let
\begin{equation*}
\varphi_{\text{lin}}(x,t;\xi) = x \cdot \xi + t \sqrt{|\xi|^2 + |x_0|^2/N^4}.
\end{equation*}
such that
\begin{equation*}
\varphi_N(x,t;\xi) = \varphi_{\text{lin}}(x,t;\xi) + \mathcal{E}_N(x,t;\xi).
\end{equation*}
The first two terms clearly do not effect the estimates for the oscillatory integral and are thus omitted in the following.

In the next step we use the Fourier series lemma to find
\begin{equation*}
\begin{split}
\int e^{i \varphi_N(x,t;\xi)} a(x,t;\xi) \hat{u}_0(\xi) d\xi &= \int e^{i \varphi_{\text{lin}}(x,t;\xi)} e^{i \mathcal{E}_N(x,t;\xi)} a(x,t;\xi) \hat{u}_0(\xi) d\xi \\
&= \sum_{k \in \Z^d} \alpha_k(x,t) \int e^{i \varphi_{\text{lin}}(x,t;\xi)} e^{i \xi \cdot k} a(x,t;\xi) \hat{u}_0(\xi) d\xi \\
&= \sum_{k \in \Z^d} \alpha_k(x,t) \int e^{i \varphi_{\text{lin}}(x+k,t;\xi)} a(x,t;\xi) \hat{u}_0(\xi) d\xi.
\end{split}
\end{equation*}
By another Fourier series argument we can separate variables for the amplitude function:
\begin{equation*}
a(x,t;\xi) = \sum_{\ell \in \Z^d} A_{\ell}(x,t) \beta(\xi) e^{i l \cdot \xi} \text{ with } |A_{\ell}(x,t)| \lesssim (1+|\ell|)^{-(d+1)} 
\end{equation*}
and $\beta \in C^\infty_c(B_d(0,4) \backslash B_d(0,1/4))$.
Here we use $a \in S^0$ uniformly in $t$.

\medskip

Plugging this into the above we find
\begin{equation*}
\int e^{i \varphi_N(x,t;\xi)} a(x,t;\xi) \hat{u}_0(\xi) d\xi = \sum_{k,\ell \in \Z^d} \alpha_k(x,t) A_{\ell}(x,t) \int e^{i \varphi_{\text{lin}}(x+k+\ell,t;\xi)} \beta(\xi) \hat{u}_0(\xi) d\xi.
\end{equation*}

Using the decay properties of the Fourier series coefficients and translation invariance of the linearized phase function, it suffices to analyze the constant coefficient propagator:
\begin{equation*}
S_{m^2} f(x,t) = \int e^{i ( x \cdot \xi + t \sqrt{|\xi|^2 + m^2})} \chi_1(\xi) \hat{f}(\xi) d\xi.
\end{equation*}

We summarize our findings of this section in the following proposition:
\begin{proposition}
\label{prop:KleinGordonReduction}
Let $C \gg 1$ and $f = \mathfrak{P}_{\geq C} f$. Suppose that for $s \geq \frac{1}{p}$ the following estimate holds for any dyadic $m^2$:
\begin{equation}
\label{eq:KleinGordonSmoothingAssumption}
\| S_{m^2} f \|_{L^p_{t,x}(B_{d+1}(0,N))} \lesssim N^s \| f \|_{L^p}.
\end{equation}
Then it follows
\begin{equation}
\label{eq:WaveHermiteSmoothingConsequence}
\| \cos(t \sqrt{\mathcal{H}}) f \|_{L^p_{t,x}([0,1] \times \R^d)} \lesssim_\varepsilon \| f \|_{L^p_{s-\frac{1}{p}+\varepsilon}}.
\end{equation}
\end{proposition}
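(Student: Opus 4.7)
The plan is a Fourier Littlewood--Paley decomposition combined with the parametrix of Section~\ref{subsection:ExistenceEikonal} and the linearization of Section~\ref{subsection:Linearization}. Writing $f = \sum_N P_N f$, the part $N \lesssim 1$ is handled by Subsection~\ref{subsection:LowHermiteFrequencies} together with the hypothesis $f = \mathfrak{P}_{\geq C} f$, so it suffices to establish, for each dyadic $N \gg 1$, the single-scale bound
\begin{equation*}
\|\cos(t\sqrt{\mathcal{H}}) P_N f\|_{L^p_{t,x}([0,1] \times \R^d)} \lesssim N^{s - 1/p} \|P_N f\|_{L^p}.
\end{equation*}
Modulo an $L^p$-bounded remainder from the parametrix, I need only treat a single Fourier integral operator with phase $\varphi$ solving \eqref{eq:EikonalExistence} and symbol $a \in S^0$.

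Cover $\R^d$ by unit balls $B(x_0, 1)$. Finite speed of propagation confines the dependence of the FIO on $[0,1] \times B(x_0, 1)$ to $P_N f \cdot \chi_{B(x_0, C)}$. On such a tile, perform the rescaling $(x, t, \xi) \mapsto (Nx, Nt, \xi/N)$; setting $v_0(y) = (P_N f)(y/N)$, so that $\hat{v}_0$ is supported in $\{|\xi'| \sim 1\}$ and $\|v_0\|_{L^p} = N^{d/p}\|P_N f\|_{L^p}$, the FIO becomes
\begin{equation*}
\int e^{i \varphi_N(x', t'; \xi')}\, \tilde{a}_N(x', t'; \xi')\, \hat{v}_0(\xi')\, d\xi'
\end{equation*}
on $[0, N] \times B(Nx_0, N)$, with $\tilde{a}_N$ enjoying $\xi'$-derivative bounds uniform in $N$ by $a \in S^0$. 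Decompose $\varphi_N = \varphi_{\mathrm{lin}} + \mathcal{E}_N$ with $\varphi_{\mathrm{lin}}(y, t'; \xi') = y \cdot \xi' + t'\sqrt{|\xi'|^2 + m^2}$ and $m^2 = |x_0|^2 / N^4$. Lemma~\ref{lem:EstimatesRemainderTerm} gives uniform $\xi'$-derivative bounds on $\mathcal{E}_N$; combined with the Fourier expansion \eqref{eq:FourierSeriesHigherOrder} (applied likewise to $\tilde{a}_N \beta(\xi')$), this rewrites the FIO as
\begin{equation*}
\sum_{k, \ell \in \Z^d} \alpha_k(x', t') A_\ell(x', t') \, S_{m^2}\tilde{v}_0(x' + k + \ell, t'),
\end{equation*}
where $\tilde{v}_0 = \check{\beta} \ast v_0$ and $|\alpha_k|, |A_\ell| \lesssim (1+|k|)^{-(d+1)}(1+|\ell|)^{-(d+1)}$ uniformly.

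Taking $L^p_{t',x'}$ norms on $[0,N] \times B(Nx_0, N)$, the translation invariance of $S_{m^2}$ reduces each shifted ball $B(Nx_0 + k + \ell, N)$ to $B_{d+1}(0, N)$ without altering $\|\tilde{v}_0\|_{L^p}$, so the hypothesis \eqref{eq:KleinGordonSmoothingAssumption}, applied at the dyadic value $m^2 = |x_0|^2/N^4$, bounds each summand by $N^s \|v_0\|_{L^p}$; the rapid decay of $\alpha_k, A_\ell$ renders the $(k,\ell)$-sum absolutely convergent. Undoing the rescaling (a factor $N^{-(d+1)/p}$ from the space-time Jacobian and $N^{d/p}$ from $v_0$) yields the per-tile bound $\lesssim N^{s-1/p}\|P_N f \cdot \chi_{B(x_0, C)}\|_{L^p}$; raising to the $p$-th power and summing over the finite-overlap cover produces the desired single-scale estimate. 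Finally, from
\begin{equation*}
\sum_N \|\cos(t\sqrt{\mathcal{H}})P_N f\|_{L^p_{t,x}} \leq \sum_N N^{-\varepsilon}\cdot N^{s - 1/p + \varepsilon}\|P_N f\|_{L^p},
\end{equation*}
H\"older in $N$ together with the elementary embedding $(\sum_N N^{\alpha p}\|P_N f\|_{L^p}^p)^{1/p} \lesssim \|f\|_{L^p_\alpha}$ (valid for $p \geq 2$ by Minkowski and Littlewood--Paley) delivers \eqref{eq:WaveHermiteSmoothingConsequence}.

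The main subtlety is that the $\xi'$-Fourier expansion produces spatially shifted copies of $S_{m^2}\tilde{v}_0(x' + k + \ell, t')$, which would obstruct a variable-coefficient estimate; here, however, $S_{m^2}$ is constant-coefficient, so translation invariance is free and the $(1+|k|)^{-(d+1)}$ decay tames the sum. Conceptually, the entire reduction rests on the uniform $\mathcal{O}(1)$ control of $\mathcal{E}_N$ over the full rescaled ball of radius $N$ (Lemma~\ref{lem:EstimatesRemainderTerm}), which is exactly what legitimizes the application of the hypothesis at the scale $N$ prescribed by the $B_{d+1}(0,N)$ appearing in \eqref{eq:KleinGordonSmoothingAssumption}.
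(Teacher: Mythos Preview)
Your argument is correct and follows essentially the same route as the paper's proof: Littlewood--Paley reduction to a single dyadic scale $N$, spatial localization to unit balls by finite speed of propagation, rescaling to unit frequencies, the Fourier-series linearization of both $\mathcal{E}_N$ and the amplitude, and finally summation via translation invariance of $S_{m^2}$ and the $(1+|k|)^{-(d+1)}$ decay.

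One minor slip: since your $x_0$ is the physical-space center and the linearization point in rescaled coordinates is $Nx_0$, the mass parameter is $m^2 = |Nx_0|^2/N^4 = |x_0|^2/N^2$, not $|x_0|^2/N^4$; this is harmless because the hypothesis is assumed for all dyadic $m^2$.
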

\begin{proof}
It suffices to show the estimate for dyadically localized functions $f$ as a consequence of Minkowki's inequality:
\begin{equation*}
\| \cos(t \sqrt{\mathcal{H}}) P_N f \|_{L^p_{t,x}([0,1] \times \R^d)} \lesssim N^{s-\frac{1}{p}} \| P_N f \|_{L^p_x(\R^d)}
\end{equation*}
with $\text{supp}(a(x,t;\cdot)) \subseteq B(0,4N) \backslash B(0,N/4)$ and $N \gg 1$ as low frequencies can be estimated trivially by the considerations of Section \ref{subsection:LowHermiteFrequencies}. Moreover, by Remark \ref{rem:LongTimeExistence}, choosing $N$ large enough, the parametrix exists for times $|t| \geq 1$. We use essentially finite speed of propagation to localize the estimate as
\begin{equation*}
\big\| e^{i t  \sqrt{\mathcal{H}}} P_N f \|_{L^p_{t,x}([0,1] \times B_d(x_0,1))} \lesssim N^{s- \frac{1}{p}} \| f \|_{L^p(\R^d)}.
\end{equation*}
We plug in the parametrix, which reduces the above to the oscillatory integral estimate:
\begin{equation*}
\big\| \int e^{i \varphi(x,t;\xi)} a(x,t;\xi) \hat{f}(\xi) d\xi \big\|_{L^p_{t,x}([0,1] \times B_d(x_0,1))} \lesssim N^{s-\frac{1}{p}} \| f \|_{L^p(\R^d)}.
\end{equation*}
Now we use the scaling $\xi = N \xi'$, $x=N^{-1} x'$, $t=N^{-1} t'$ to normalize the frequencies to the unit annulus. We have
\begin{equation}
\label{eq:RescaledParametrix}
\begin{split}
&\quad \big\| \int e^{i \varphi(x,t;\xi)} a(x,t;\xi) \hat{f}(\xi) d\xi \big\|_{L^p_{t,x}([0,1] \times B_d(x_0,1))} \\
&= N^{-\frac{d}{p}} N^{-\frac{1}{p}} N^d \big\| \int e^{i \varphi_N(x', t'; \xi')} a_N(x',t';\xi') \hat{f}(N \xi') d\xi' \big\|_{L^p_{t,x}([0,N] \times B_d(N x_0, N))}
\end{split}
\end{equation}
with $\varphi_N(x',t';\xi') = \varphi(N^{-1} x', N^{-1} t; N\xi')$ and $a_N(x',t';\xi') = a(N^{-1} x', N^{-1} t'; N \xi')$.

Define $\hat{g}(\xi') = N^{d-\frac{d}{p}} \hat{f}(N \xi')$ such that $ \| g \|_{L^p} = \| f \|_{L^p}$. We linearize the phase function $\varphi_N(x',t';\xi')$ in $(x',t')$ around $x'=Nx_0$ and $t' =0$ and use a Fourier series argument to find:
\begin{equation*}
\begin{split}
&\quad \big\| \int e^{i \varphi_N(x',t';\xi')} a_N(x',t';\xi') \hat{g}(\xi') d\xi' \big\|_{L^p_{t',x'}([0,N] \times B_d(N x_0,N))} \\
&\leq \sum_{k,\ell \in \Z^d} (1+|k|)^{-(d+1)} (1+|\ell|)^{-(d+1)} \\
&\qquad \quad \times \big\| \int e^{i \varphi_{\text{lin}}(x'+k+\ell,t';\xi')} \beta(\xi') \hat{g}(\xi') d\xi' \big\|_{L^p_{t',x'}([0,N] \times B_d(N x_0, N))}.
\end{split}
\end{equation*}
By translation invariance we can apply the hypothesis \eqref{eq:KleinGordonSmoothingAssumption} to estimate
\begin{equation*}
\| S_{m^2} g(x'+k+\ell,t') \|_{L^p_{t,x}([0,N] \times B_d(Nx_0, N))} \lesssim \| g \|_{L^p}.
\end{equation*}
Hence, we obtain
\begin{equation*}
\begin{split}
&\quad \big\| \int e^{i \varphi_N(x',t';\xi')} a_N(x',t';\xi') \hat{g}(\xi') d\xi' \big\|_{L^p_{t',x'}([0,N] \times B_d(N x_0,N))} \\
&\lesssim \sum_{k,\ell \in \Z^d} (1+|k|)^{-(d+1)} (1+|\ell|)^{-(d+1)} N^s \| g \|_{L^p} \lesssim N^s \| f \|_{L^p}.
\end{split}
\end{equation*}
Plugging this into \eqref{eq:RescaledParametrix} we complete the proof.
\end{proof}

%

\section{Necessary conditions for local smoothing and pointwise estimates}
\label{section:Knapp}

In this section we obtain necessary conditions for local smoothing estimates for Klein-Gordon equations. To this end, we test with the anisotropic and isotropic Knapp example. Finally, we compare to pointwise estimates. Define
\begin{equation}
S_{m^2,N} f(x,t) = \int e^{i ( x \cdot \xi + t \sqrt{|\xi|^2 + m^2})} \chi_N(\xi) \hat{f}(\xi) d\xi.
\end{equation}

\subsection{Necessary conditions for local smoothing estimates}

We show the following proposition:
\begin{proposition}
\label{prop:NecessaryConditions}
Let $2 \leq p < \infty$, $N \in 2^{\N_0}$, and $m \in 2^{\Z} \cup \{ 0 \}$. 

\begin{itemize}
\item[(i)]
Necessary for the estimate
\begin{equation}
\label{eq:NecessaryLocalSmoothing}
\| S_{m^2,N} f \|_{L^p_{t,x}([0,1] \times \R^d)} \lesssim N^s \| f \|_{L^p}
\end{equation}
to hold uniformly in $N$ and $m$ is 
\begin{equation}
\label{eq:LocalSmoothingEllipticNecessary}
s \geq \max ( d \big( \frac{1}{2} - \frac{1}{p} \big) - \frac{1}{p}, 0).
\end{equation} 

\item[(ii)]
Suppose that $m \leq m_0$. Necessary for \eqref{eq:NecessaryLocalSmoothing} to hold uniformly in $N \in 2^{\N_0}$ (with implicit constant depending on $m_0$) is
\begin{equation}
\label{eq:LocalSmoothingWaveNecessary}
s \geq \max \big( (d-1) \big( \frac{1}{2} - \frac{1}{p} \big) - \frac{1}{p}, 0 \big).
\end{equation}
\end{itemize}
\end{proposition}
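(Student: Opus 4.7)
Both parts are proved by testing against an incoming focusing wave; the trivial bound $s\geq 0$ comes from $L^p$-continuity of $t\mapsto S_{m^2,N}\psi(\cdot,t)$ at $t=0$ applied to a fixed nonzero Schwartz bump $\psi$.

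Define the incoming-focusing datum $f_N$ by $\widehat{f_N}(\xi):=\chi_N(\xi)\,e^{-i\sqrt{|\xi|^2+m^2}}$, so that
\[
S_{m^2,N}f_N(x,t)=\int e^{i\bigl(x\cdot\xi+(t-1)\sqrt{|\xi|^2+m^2}\bigr)}\chi_N(\xi)\,d\xi,
\]
and in particular $S_{m^2,N}f_N(x,1)=F_N(x):=\mathcal F^{-1}(\chi_N)(x)$, a Bessel-type kernel with $|F_N(0)|\sim N^d$. Under the hypothesis $m\lesssim N$, for $|x|\leq c'N^{-1}$ and $|t-1|\leq cN^{-1}$ the total phase $x\cdot\xi+(t-1)\sqrt{|\xi|^2+m^2}$ is bounded in absolute value on $\mathrm{supp}\,\chi_N$, so a Taylor expansion gives $|S_{m^2,N}f_N|\gtrsim N^d$ on this spacetime set, which has volume $\sim N^{-(d+1)}$. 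Therefore
\[
\|S_{m^2,N}f_N\|_{L^p_{t,x}([0,1]\times\mathbb R^d)}\gtrsim N^{d-(d+1)/p}=N^{d/p'-1/p}.
\]

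It then suffices to estimate $\|f_N\|_p$ via stationary phase applied to $f_N(x)=\int e^{i(x\cdot\xi-\varphi(\xi))}\chi_N(\xi)\,d\xi$ in each regime. For part (ii) with $m\leq m_0\ll N$, the map $\nabla_\xi\varphi=\xi/\varphi$ depends essentially only on the direction of $\xi$, so the stationary condition forces $|x|\approx 1$ with $\xi^*$ free along a ray. Performing a $(d-1)$-dimensional tangential stationary phase (Hessian eigenvalues $\sim N^{-1}$) followed by radial integration over $|\xi|\sim N$ identifies $f_N$ as the frequency-$N$ wave fundamental solution at time $1$, concentrated on the shell $\{\bigl||x|-1\bigr|\lesssim N^{-1}\}$ at amplitude $\sim N^{(d+1)/2}$; hence $\|f_N\|_p\sim N^{(d+1)/2-1/p}$ and the ratio produces $s\geq (d-1)(\tfrac12-\tfrac1p)-\tfrac1p$. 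For part (i) I take $m\sim N$: the Hessian $H_\varphi$ is uniformly nondegenerate with all $d$ eigenvalues $\sim N^{-1}$, so $|\det H_\varphi|^{-1/2}\sim N^{d/2}$, and $\nabla\varphi$ maps $\{|\xi|\sim N\}$ diffeomorphically onto an annulus of volume $\sim 1$; a single-stationary-point formula then yields $|f_N|\sim N^{d/2}$ there, $\|f_N\|_p\sim N^{d/2}$, and $s\geq d(\tfrac12-\tfrac1p)-\tfrac1p$.

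The delicate step is the stationary-phase calculation in the wave regime, where the radial degeneracy of $\nabla\varphi$ has to be handled by separating the radial integration from the transverse $(d{-}1)$-dimensional stationary phase; the focus-window estimate above is by contrast a robust Taylor bound valid uniformly for $m\lesssim N$. Taking the maximum with the trivial bound $s\geq 0$ gives the claimed lower bounds in both (i) and (ii).
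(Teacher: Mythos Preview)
Your focusing-wave (isotropic Knapp) argument is correct and is essentially the same computation the paper carries out: choose $\widehat{f_N}=\chi_N e^{-i\sqrt{|\xi|^2+m^2}}$, read off the lower bound on the solution near the focus $(x,t)=(0,1)$, and bound $\|f_N\|_p$ by stationary phase, taking $m\sim N$ for part (i) and $m$ bounded for part (ii). The paper does the same, after first rescaling to unit frequencies; the curvature computation you sketch (all $d$ Hessian eigenvalues $\sim N^{-1}$ when $m\sim N$, versus $d-1$ nondegenerate angular eigenvalues and a degenerate radial direction when $m\ll N$) matches the paper's eigenvalue analysis of $\partial^2\varphi$.

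There is one genuine gap: your justification of the bound $s\geq 0$ does not work. Testing with a \emph{fixed} Schwartz bump $\psi$ gives no information, since $\chi_N\hat\psi$ and hence both sides of the estimate decay rapidly in $N$. If instead you take $\psi$ frequency-localized at scale $N$, continuity of $t\mapsto S_{m^2,N}\psi(\cdot,t)$ at $t=0$ only persists for $|t|\lesssim N^{-1}$ (the phase $t\sqrt{|\xi|^2+m^2}$ is of size $tN$), which yields merely $s\geq -1/p$. To obtain $s\geq 0$ you need a datum whose $L^p$ norm is essentially preserved for \emph{all} $t\in[0,1]$: this is the anisotropic Knapp example, with $\hat f$ supported in a ball of radius $\sim N^{1/2}$ around some $\xi_0$ with $|\xi_0|\sim N$, so that the quadratic Taylor remainder $t\cdot O(N^{-1})\cdot N$ stays bounded and $S_{m^2,N}f(\cdot,t)\approx f(\cdot+t\nabla\varphi(\xi_0))$ for $|t|\leq 1$. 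The paper invokes exactly this example for the $s\geq 0$ part of both (i) and (ii).
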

\begin{proof}
We normalize to unit frequencies using the scaling
\begin{equation*}
\xi = N \xi', \quad x = N^{-1} x', \quad t = N^{-1} t'.
\end{equation*}
This yields
\begin{equation*}
\begin{split}
&\quad \big\| \int e^{i ( x \cdot \xi + t \sqrt{|\xi|^2+  m^2})} \hat{f}(\xi) \chi_N(\xi) d\xi \big\|_{L^p_{t,x}([0,1] \times \R^d)} \\
&= N^{d-\frac{d}{p}} N^{-\frac{1}{p}} \big\| \int e^{i ( x' \cdot \xi' + t' \sqrt{|\xi'|^2 + m^2/N^2})} \hat{f}(N \xi') \chi_1(\xi') d\xi' \big\|_{L^p_{t,x}([0,N] \times \R^d)}.
\end{split}
\end{equation*}
Define $g: \R^d \to \C$ via $\hat{g}(\xi') = N^{d-\frac{d}{p}} \hat{f}(N \xi')$ such that $\| g \|_{L^p(\R^d)} = \| f \|_{L^p(\R^d)}$ and let $\mu = m/N$.
We turn to the analysis of
\begin{equation*}
\big\| \int e^{i ( x' \cdot \xi' + t' \sqrt{|\xi'|^2 + \mu^2})} \chi_1(\xi') \hat{g}(\xi') d\xi' \big\|_{L^p_{t,x}([0,N] \times \R^d)} \lesssim N^{\bar{s}} \| g \|_{L^p}.
\end{equation*}
The above estimate holds with $\bar{s}$ if and only if the estimate \eqref{eq:NecessaryLocalSmoothing} holds with $s = \bar{s}- \frac{1}{p}$.

Note the following two extreme cases: If $\mu^2 \lesssim 1/N$, we have
\begin{equation}
\label{eq:WaveReduction}
t' \sqrt{|\xi'|^2 + \mu^2} = t' |\xi'| + t' (\sqrt{|\xi'|^2+\mu^2} - |\xi'|).
\end{equation}
If $\mu^2 \lesssim 1/N$, we have
\begin{equation*}
\big| \frac{t' \mu^2}{|\xi'| + \sqrt{|\xi'|^2 + \mu^2}} \big| \lesssim 1 \text{ for } |t'| \ll N.
\end{equation*}
This means for $\mu^2 \lesssim \frac{1}{N} \Leftrightarrow m^2 \lesssim N$ the evolution resembles the wave propagation. This can be made precise, e.g., by a Fourier series argument as used in the previous section when linearizing the phase function. Testing with the anisotropic and isotropic Knapp examples for the wave equation, which will be recalled in a more general context below, we find the necessary conditions \eqref{eq:LocalSmoothingWaveNecessary}. 

This shows (ii) since the low frequencies can be trivially estimated. Choosing $N$ large enough such that $m_0^2 \lesssim N$, we can use the observation in \eqref{eq:WaveReduction} to reduce to the wave evolution.

\medskip
Now consider $\mu \gg N \Leftrightarrow m \gg N^2$. In this case we can write
\begin{equation*}
t' \sqrt{|\xi'|^2 + \mu^2} = t' \mu + t' (\sqrt{|\xi'|^2 + \mu^2} - \mu)
\end{equation*}
which shows that for $\mu \gg N$ we have
\begin{equation}
\label{eq:ComparisonWave}
t'(\sqrt{|\xi'|^2 + \mu^2} - \mu) = \frac{t' |\xi'|^2}{\sqrt{|\xi'|^2 + \mu^2} + \mu} \ll 1
\end{equation}
provided that $|t'| \ll N$. The factor $e^{it' \mu}$ is a pure phase and $e^{it'( \sqrt{|\xi|^2 + \mu^2} - \mu)}$ can be expanded into Fourier series or exponential sums to see that it does not significantly contribute.

This means there is no significant change in the $L^p$-norm for $|t'| \ll N$. ``Local smoothing" holds with $\bar{s}=\frac{1}{p} \Leftrightarrow s = 0$.

\bigskip

We turn to the intermediate cases
\begin{equation*}
\frac{1}{\sqrt{N}} \lesssim \mu \lesssim N.
\end{equation*}

Firstly, consider the anisotropic Knapp example: The anisotropic Knapp example is supposed to linearize the phase function with small Fourier support such that
\begin{equation}
\label{eq:TransportKnapp}
\big| \int e^{i ( x \cdot \xi + t \sqrt{|\xi|^2 + \mu^2})} \hat{g}(\xi) d\xi \big| \approx \big| g(x+ \frac{t \xi_0}{\sqrt{|\xi_0|^2 + \mu^2}} ) \big|.
\end{equation}
Note that we can choose $\text{supp}(\hat{g}) \subseteq B(\xi_0,N^{-\frac{1}{2}})$ independently of $\mu$ to achieve the above because $\varphi \in C^2$.

More precisely, let $\eta_{\alpha} \in C^\infty_c((-2 \alpha, 2 \alpha))$ be a smoothed version of the indicator function on $[-\alpha, \alpha]$. Define
\begin{equation*}
\hat{g}(\xi) = \eta_{N^{-\frac{1}{2}}}(\xi_1-1) \eta_{N^{-\frac{1}{2}}}(\xi_2) \ldots \eta_{N^{-\frac{1}{2}}}(\xi_d).
\end{equation*}
We can carry out a Taylor expansion
\begin{equation*}
\begin{split}
&\quad t \sqrt{|\xi|^2 + \mu^2} = t \big( \sqrt{|\xi_0|^2 + \mu^2} + \frac{\xi_0}{\sqrt{|\xi_0|^2 + \mu^2}} \cdot (\xi - \xi_0) +\mathcal{E}(\xi_0,\xi) \big) \\
 &\text{ with } \mathcal{E}(\xi_0,\xi) = O(|\xi-\xi_0|^2).
 \end{split}
\end{equation*}
Hence, the error term does not significantly contribute to oscillations for $|t| \lesssim N$.

\medskip

It turns out that in the case $\mu \not\sim 1$ this is not the maximal size of the Fourier support of $g$ we can allow such that \eqref{eq:TransportKnapp} remains true. To see this we compute the principal curvatures of $\varphi$. We have
\begin{equation*}
\partial^2_{ij} \varphi = \frac{(|\xi|^2 + \mu^2) \delta_{ij} - \xi_i \xi_j}{(|\xi|^2 + \mu^2)^{\frac{3}{2}}}.
\end{equation*}
Let $\xi'= O \xi = |\xi| e_1$ be the rotation to rotate $\xi$ into $e_1$-direction. We obtain for the conjugation of $\partial^2 \varphi$ with $O$:
\begin{equation*}
O^t \partial^2 \varphi O = \frac{(|\xi|^2 + \mu^2) \delta_{ij} - |\xi|^2 \delta_{i1}}{(|\xi|^2 + \mu^2)^{\frac{3}{2}}}.
\end{equation*}
This shows that the principal curvature in the radial direction is of size $\sim \frac{\mu^2}{(|\xi|^2 + \mu^2)^{\frac{3}{2}}} \sim \mu^2 \wedge \frac{1}{\mu}$. In the angular direction we find the principal curvatures to be of size $\sim \frac{1}{\mu} \wedge 1$.

First, we consider the case $\mu \lesssim 1$, in which case the principal curvature into radial direction is of size $\mu^2$ and into angular directions of size $\sim 1$. This suggests to choose the Knapp example to have support of size $N^{-\frac{1}{2}}/\mu$ into the radial direction and $N^{-\frac{1}{2}}$ into the angular direction.

Indeed, consider $\xi_1 \in [1,1+N^{-\frac{1}{2}} \mu^{-1}]$ and $\xi_j \in [-N^{-\frac{1}{2}},N^{-\frac{1}{2}}]$ for $j=2,\ldots,d$. We use the anisotropic dilation $\xi_1' = \mu \xi_1$ to obtain the phase function
\begin{equation*}
\psi(\xi') = \sqrt{|\xi_1'|^2 / \mu^2 + |\xi_2'|^2 + \ldots + |\xi_d'|^2 + \mu^2}.
\end{equation*}
The support properties are
\begin{equation}
\label{eq:SupportAnisotropicDilation}
\xi_1' \in [\mu, \mu + N^{-\frac{1}{2}}], \quad \xi_j' \in [-N^{-\frac{1}{2}},N^{-\frac{1}{2}}] \text{ for } j=2,\ldots,d.
\end{equation}
It is straight-forward to compute that
\begin{equation*}
\partial^2_{\xi' \xi'} \psi(\xi') = 1 + O ( N^{-\frac{1}{2}} \mu^{-1})  \text{ for } \xi' \text{ satisfying } \eqref{eq:SupportAnisotropicDilation}.
\end{equation*}
Since we suppose that $\mu \gg N^{-\frac{1}{2}}$, this makes $\{ (\xi',\psi(\xi')) : \xi' \text{ like in } \eqref{eq:SupportAnisotropicDilation} \}$ a uniformly elliptic surface.

In case $\mu \gg 1$ we can choose the support
\begin{equation*}
\xi_1 \in [1,1+N^{-\frac{1}{2}} \mu^{\frac{1}{2}}], \quad \xi_j \in [-N^{-\frac{1}{2}} \mu^{\frac{1}{2}}, N^{-\frac{1}{2}} \mu^{\frac{1}{2}} ].
\end{equation*}
We use the isotropic dilation $\xi_i' = \xi_i / \mu^{\frac{1}{2}}$ to find the phase function
\begin{equation*}
\psi(\xi') = \sqrt{\mu |\xi_1'|^2 + \mu |\xi_2'|^2 + \ldots + \mu |\xi_d'|^2 + \mu^2}
\end{equation*}
with the support
\begin{equation}
\label{eq:SupportAnisotropicDilationII}
\xi_1' \in [\mu^{-\frac{1}{2}}, \mu^{-\frac{1}{2}} + N^{-\frac{1}{2}}] , \quad \xi_j' \in [- N^{-\frac{1}{2}}, N^{-\frac{1}{2}}] \text{ for } j=2,\ldots,d,
\end{equation}
and we obtain
\begin{equation*}
\partial_{\xi' \xi'}^2 \psi(\xi') = 1 + O(N^{-1} ) \text{ for } \xi' \text{ satisfying } \eqref{eq:SupportAnisotropicDilationII}.
\end{equation*}
This shows uniform ellipticity provided that $\mu \ll N$.

\medskip

We turn to the isotropic Knapp example: In this case we consider the initial data (for the unrescaled problem)
\begin{equation*}
\hat{f}(\xi) = \theta(N^{-1} \xi) e^{-i \sqrt{|\xi|^2 + m^2}},
\end{equation*}
where $\theta \in C^\infty_c(B(0,4) \backslash B(0,1/4))$ denotes a radially symmetric bump function.

We have
\begin{equation*}
\begin{split}
&\quad \| S_{m^2} f \|_{L^p_{t,x}([0,1] \times \R^d)} \\
&= N^d \big\| \int e^{i (\langle N x, \xi' \rangle + t N \sqrt{|\xi'|^2 + m^2/N^2} - N \sqrt{|\xi'|^2 + m^2 / N^2})} \theta(\xi') d \xi' \big\|_{L^p([0,1] \times \R^d)}.
\end{split}
\end{equation*}
For $N \lesssim m \lesssim N^2$ we have
\begin{equation*}
|(t-1) N \sqrt{|\xi'|^2 + m^2 / N^2}| \lesssim 1 \text{ for } |t-1| \lesssim 1/N.
\end{equation*}

Hence, we estimate
\begin{equation*}
N^d \big\| e^{i ( \langle N x, \xi' \rangle + (t-1) N \sqrt{|\xi'|^2  + m^2 /N^2})} \theta(\xi') d \xi' \big\|_{L^p([0,1] \times \R^d)} \gtrsim N^{d - \frac{1}{p} - \frac{d}{p}}
\end{equation*}
because
\begin{equation*}
\big\| \int e^{i \langle N x, \xi' \rangle} \theta(\xi') d\xi' \big\|_{L^p(\R^d)} \gtrsim N^{-\frac{d}{p}}.
\end{equation*}

We compute the $L^p$-norm of the initial data
\begin{equation*}
\begin{split}
&\quad \big\| \int e^{i \langle x, \xi \rangle} \theta(N^{-1} \xi) e^{-i \sqrt{|\xi|^2+ m^2}} d \xi \big\|_{L^p(\R^d)} \\
&= N^{d-\frac{d}{p}} \big\| \int e^{i \langle x', \xi' \rangle} e^{-i N \sqrt{|\xi'|^2 + \mu^2}} \theta(\xi') d\xi' \big\|_{L^p(\R^d)}
\end{split}
\end{equation*}
via the method of (non-)stationary phase.

We consider the case $\mu \lesssim 1$ first. 
We find the stationary points from the first derivative:
\begin{equation*}
x - \frac{N \xi}{\sqrt{|\xi|^2 + \mu^2}} = 0.
\end{equation*}
Consequently, we have stationary points for $|x| \sim N$.

Above we computed the eigenvalues of the Hessian of the phase function
\begin{equation*}
\varphi_N(\xi') = N \sqrt{|\xi'|^2 + \mu^2}
\end{equation*}
to be $N(\mu^2 \wedge \frac{1}{\mu})$ in the radial direction and $N \big( \frac{1}{\mu} \wedge 1 \big)$ in the angular direction.
For $|x| \sim N$ we obtain the asymptotic
\begin{equation*}
|F(x)| \sim (N \mu^2)^{-\frac{1}{2}} \underbrace{ N^{-\frac{1}{2}} \ldots N^{-\frac{1}{2}}}_{d-1 \text{ times}} = \mu^{-1} N^{-\frac{d}{2}}
\end{equation*}
from the Van der Corput-lemma \cite{Stein1993}.

Taking the rapid decay of the oscillatory integral for $|x| \ll N$ into account, we obtain
\begin{equation*}
N^d N^{-\frac{d}{p}} \big\| \int e^{i ( \langle x', \xi' \rangle - N \sqrt{|\xi'|^2 + \mu^2})} \theta(\xi') d\xi' \big\|_{L^p(\R^d)} \lesssim N^d N^{-\frac{d}{p}} N^{\frac{d}{p}} \mu^{-1} N^{-\frac{d}{2}} \sim N^{\frac{d}{2}} \mu^{-1}.
\end{equation*}
For the estimate \eqref{eq:NecessaryLocalSmoothing} to hold we obtain the condition
\begin{equation*}
N^d N^{-\frac{1}{p}} N^{-\frac{d}{p}} \lesssim N^{s+\frac{d}{2}} \mu^{-1}.
\end{equation*}
Consequently, for $\mu \sim 1$ we obtain the condition
\begin{equation*}
s \geq d \big( \frac{1}{2} - \frac{1}{p} \big) - \frac{1}{p}.
\end{equation*}
We remark that for $\mu \gg 1$ the necessary condition actually becomes weaker because the dispersive properties attenuate.
\end{proof}
\subsection{Pointwise estimates}

For reference we remark on pointwise estimates:
\begin{equation}
\label{eq:PointwiseEstimateKleinGordon}
\| S_{m^2,N} f(1) \|_{L^p_x(\R^d)} \lesssim N^s \| f \|_{L^p_x(\R^d)}.
\end{equation}
A variant of the computation from Proposition \ref{prop:NecessaryConditions} shows that $s \geq d \big| \frac{1}{2} - \frac{1}{p} \big|$ is necessary for \eqref{eq:PointwiseEstimateKleinGordon} to hold uniformly in $N \in 2^{\N_0}$ and $m \in 2^{\Z}$. Details are omitted to avoid repetition. For reference we prove the estimate in the following:
\begin{proposition}[Pointwise~estimate~for~Klein-Gordon~equations]
\label{prop:PointwiseEstimates}
Let $2 \leq p \leq \infty$. The estimate \eqref{eq:PointwiseEstimateKleinGordon} holds with constant independent of $m \in 2^{\Z}$ and $N \in 2^{\N_0}$ for
\begin{equation*}
s \geq d \big( \frac{1}{2} - \frac{1}{p} \big).
\end{equation*}
\end{proposition}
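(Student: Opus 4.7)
The plan is to argue by complex interpolation between the two trivial endpoints $p=2$ and $p=\infty$. By Plancherel, since $|\chi_N(\xi) e^{i\sqrt{|\xi|^2+m^2}}| \leq 1$, the $L^2 \to L^2$ operator norm of $S_{m^2,N}(1)$ is at most $1$, giving the estimate with $s=0$ at $p=2$. For the $L^\infty \to L^\infty$ endpoint it suffices to bound the $L^1$-norm of the convolution kernel
$$
K_{m^2,N}(x) = \int_{\R^d} e^{i(x\cdot\xi + \sqrt{|\xi|^2+m^2})} \chi_N(\xi)\, d\xi,
$$
and the target is $\|K_{m^2,N}\|_{L^1(\R^d)} \lesssim N^{d/2}$, uniformly in $m \in 2^{\Z}$. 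Riesz--Thorin with $1/p = (1-\theta)/2$, i.e.\ $\theta = 1-2/p$, then yields $\|S_{m^2,N}(1)\|_{L^p \to L^p} \leq (N^{d/2})^\theta = N^{d(1/2 - 1/p)}$, which is exactly the claimed estimate.

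For the kernel estimate I split $\R^d = \{|x|\leq R\} \cup \{|x|>R\}$ for a large absolute constant $R$ (say $R=4$). Inside the ball, Cauchy--Schwarz together with Plancherel gives
$$
\int_{|x|\leq R} |K_{m^2,N}(x)|\, dx \leq |B(0,R)|^{1/2}\, \|K_{m^2,N}\|_{L^2} = |B(0,R)|^{1/2}\, \|\chi_N\|_{L^2} \lesssim N^{d/2}.
$$
Outside the ball I rescale $\xi = N\eta$, writing $K_{m^2,N}(x) = N^d \int e^{iN(x\cdot \eta + \sqrt{|\eta|^2+\mu^2})} \chi_1(\eta)\, d\eta$ with $\mu = m/N$. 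Since $|\eta/\sqrt{|\eta|^2+\mu^2}| \leq 1$, the phase gradient $x + \eta/\sqrt{|\eta|^2+\mu^2}$ has modulus at least $|x|/2$ on $\mathrm{supp}(\chi_1)$ once $|x|\geq 2$. Standard repeated integration by parts with the vector field $L = \nabla_\eta \tilde\phi/(iN|\nabla_\eta \tilde\phi|^2) \cdot \nabla_\eta$ then gives
$$
|K_{m^2,N}(x)| \lesssim_M N^d (N|x|)^{-M}, \quad |x|\geq 2,
$$
for every $M \geq 1$. Taking $M > d$ makes $\int_{|x|>R} |K_{m^2,N}| \lesssim N^{d-M}$ negligible compared to $N^{d/2}$.

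The one point that deserves care is the uniformity in $m$ of the derivative bounds used in the integration by parts. The key claim is that $|\partial_\eta^\alpha \sqrt{|\eta|^2+\mu^2}| \lesssim_\alpha 1$ on $\mathrm{supp}(\chi_1)$ uniformly in $\mu \geq 0$. This follows directly from $\partial_{\eta_i}\sqrt{|\eta|^2+\mu^2} = \eta_i/\sqrt{|\eta|^2+\mu^2}$ being bounded by $1$ and, inductively, any higher derivative being a rational function whose denominator $(\sqrt{|\eta|^2+\mu^2})^k$ is bounded below by $|\eta|^k \sim 1$ on $\mathrm{supp}(\chi_1)$. With this uniformity in hand, the integration by parts is routine and the rest of the argument is textbook; I expect the verification of these uniform derivative bounds to be the only mildly delicate step.
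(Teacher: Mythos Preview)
Your proof is correct. Both you and the paper interpolate between the trivial $L^2$ bound and an $L^\infty$ bound, but you obtain the $L^\infty$ endpoint differently. The paper rescales to unit frequencies, decomposes the annulus into $N^{-1/2}$-balls $\theta$, observes that on each such ball the phase $N\sqrt{|\xi'|^2+m_0^2}$ is essentially linear so that the localized kernel is $L^1$-bounded uniformly in $m$, and then applies Minkowski's inequality together with a count of $\sim N^{d/2}$ balls. You instead bound the full kernel's $L^1$-norm directly: Cauchy--Schwarz plus Plancherel on a fixed ball yields the $N^{d/2}$ contribution, and non-stationary phase (with the uniform-in-$\mu$ derivative bounds you verified) makes the far-field contribution negligible. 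Your route is more self-contained and avoids any decomposition; the paper's route has the advantage of reusing the same $N^{-1/2}$-ball kernel estimate that underlies the decoupling arguments elsewhere, so it fits the surrounding machinery more naturally.
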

\begin{proof}
For $p=2$ the estimate is immediate from Plancherel's theorem. Let $p > 2$ in the following. We rescale the frequencies to the unit annulus and rescale the spatial variable dually:
\begin{equation*}
\begin{split}
&\quad \big\| \int_{|\xi| \sim N} e^{i(x \cdot \xi + \sqrt{|\xi|^2 + m^2})} \hat{f}(\xi) d\xi \big\|_{L^p(\R^d)} \\
 &= N^{d-\frac{d}{p}} \big\| \int_{|\xi'| \sim 1} e^{i(x' \cdot \xi' + N \sqrt{|\xi'|^2 + m^2/N^2})} \hat{f}(N \xi') d\xi' \big\|_{L^p(\R^d)}. 
 \end{split}
\end{equation*}
We define $\hat{g}(\xi') = N^{d - \frac{d}{p}} \hat{f}(N \xi')$ such that $\| g \|_{L^p(\R^d)} = \| f \|_{L^p(\R^d)}$. Let $m_0^2 = m^2/N^2$ to ease notation. Let $\theta \subseteq B(0,2)$ be a ball of size $N^{-\frac{1}{2}}$. We observe that independently of $m_0$ we have the following $L^p$-bound as consequence of a kernel estimate:
\begin{equation*}
\big\| \int_{\theta} e^{i(x' \cdot \xi' + N \sqrt{|\xi'|^2 + m_0^2})} \hat{g}(\xi') d\xi' \big\|_{L^p(\R^d)} \lesssim \| P_{\theta} g \|_{L^p(\R^d)}.
\end{equation*}
Above $P_{\theta}$ denotes the smooth Fourier projection to a mildly enlarged set $\tilde{\theta}$. Hence, by Minkowksi's inequality, we find
\begin{equation*}
\big\| \int_{|\xi'| \sim 1} e^{i(x' \cdot \xi' + N \sqrt{|\xi'|^2 + m_0^2})} \hat{g}(\xi') d\xi' \big\|_{L^p(\R^d)} \leq \sum_{\theta: N^{-\frac{1}{2}}-\text{ball}} \| P_{\theta} g \|_{L^p(\R^d)}.
\end{equation*}
Above the sum is taken over an essentially disjoint cover of the unit annulus with $N^{-\frac{1}{2}}$-ball.
For $p=\infty$, we obtain from H\"older's inequality:
\begin{equation*}
\sum_{\theta: N^{-\frac{1}{2}}-\text{ball}} \| P_{\theta} g \|_{L^p(\R^d)} \lesssim N^{\frac{d}{2}} \sup_{\theta} \|P_{\theta} g \|_{L^\infty(\R^d)} \lesssim N^{\frac{d}{2}} \| g \|_{L^\infty(\R^d)}.
\end{equation*}
The ultimate estimate following from another kernel estimate. This concludes the proof for $p=\infty$. For $2 < p <\infty$ the claim follows from interpolation.
\end{proof}

\section{Proof of local smoothing estimates in one dimension}
\label{section:ProofLocalSmoothing1d}

Let $u$ be a solution to
\begin{equation*}
\left\{ \begin{array}{cl}
\partial_t^2 u &= \Delta u - |x|^2 u, \quad (t,x) \in \R \times \R, \\
u(0) &= u_0 \in \mathcal{S}(\R), \quad \dot{u}(0) = 0.
\end{array} \right.
\end{equation*}

In this section we show for $s>0$
\begin{equation}
\label{eq:LocalSmoothing1d}
\| u \|_{L^4_{t,x}([0,1] \times \R)} \lesssim_s \| u_0 \|_{L^4_{s}(\R)}. 
\end{equation}
This yields the proof of Theorem \ref{thm:LocalSmoothing1d} by interpolation with the trivial estimate
\begin{equation*}
\big\| u \|_{L^2_{t,x}([0,1] \times \R)} \lesssim \| u_0 \|_{L^2(\R)}
\end{equation*}
and taking $q$ large enough
\begin{equation*}
\big\| u \|_{L^q_{t,x}([0,1] \times \R)} \lesssim \| u_0 \|_{L^q_{s_q+\varepsilon}(\R)}
\end{equation*}
with $s_q = \frac{1}{2} - \frac{1}{q}$. The latter estimate follows from linearization of the parametrix and pointwise estimates due to Proposition \ref{prop:PointwiseEstimates}. In the remainder of this section we are concerned with the proof of \eqref{eq:LocalSmoothing1d}. 

\medskip

Invoking Proposition \ref{prop:KleinGordonReduction} the above amounts to showing
 \begin{equation}
\label{eq:OneDimLocalSmoothing}
\big\| \int_{ \{ \frac{1}{2} \leq |\xi| \leq 2 \}} e^{i (x \cdot \xi + t \sqrt{|\xi|^2 + m^2})} \hat{f}(\xi) d\xi \big\|_{L^4_{t,x}(B_2(0,N))} \lesssim_\varepsilon N^{\frac{1}{4}+\varepsilon} \| f \|_{L^4(\R)}
\end{equation}
for dyadic $\frac{1}{N} \leq m^2 \leq N$. In the following write for brevity
\begin{equation*}
S_{m^2}f(t) = e^{it \sqrt{-\Delta + m^2}} f \text{ and } \mathcal{E}_{m^2} f(x,t) = \int_{ \{ \frac{1}{2} \leq \xi \leq 2 \} } e^{i( x \cdot \xi + t \sqrt{|\xi|^2 + m^2})} f(\xi) d\xi.
\end{equation*}

In the extreme cases $m^2 \lesssim \frac{1}{N}$ and $m^2 \gtrsim N$, there is no dispersion and 
\begin{equation*}
\| S_{m^2} f(t) \|_{L^4_x(\R)} \sim \| f \|_{L^4(\R)}.
\end{equation*}

\medskip

Recall that for $m \sim 1$ we can use the Córdoba--Fefferman square function estimate (\cite{Fefferman1973,Cordoba1977}) to obtain
\begin{equation*}
\begin{split}
&\quad \big\| w_{B_2(0,N)} \int e^{i (x \cdot \xi + t \sqrt{|\xi|^2 + m^2})} \hat{f}(\xi) d\xi \big\|_{L^4_{t,x}(\R^2)} \\ &\lesssim \big\| \big( \sum_{\theta : N^{-\frac{1}{2}} - \text{interval}} \big| \int_\theta e^{i( x \cdot \xi + t \sqrt{|\xi|^2+ m^2})} \hat{f}(\xi) d\xi \big|^2 \big)^{\frac{1}{2}} \big\|_{L^4_{t,x}(\R^2)}.
\end{split}
\end{equation*}

To generalize the claim to dyadic $\frac{1}{N} \leq m^2 \leq 1$, we use a square function estimate which takes into account that the curvature is $\sim m^2$. This follows from rescaling:
\begin{proposition}
\label{prop:SquareFunction1d}
Let $N \in 2^{\N_0}$.
For $\frac{1}{N} \leq m^2 \leq 1$ the following square function estimate holds:
\begin{equation*}
\begin{split}
&\quad \big\| \int_{\{ \frac{1}{2} \leq \xi \leq 2 \}} e^{i (x \cdot \xi + t \sqrt{|\xi|^2 + m^2})} f(\xi) d\xi \big\|_{L^4_{t,x}(w_{B_2(0,N)})} \\
&\lesssim \big\| \big( \sum_{\theta : N^{-\frac{1}{2}}/m - \text{interval}} \big| \int_\theta e^{i( x \cdot \xi + t \sqrt{|\xi|^2+ m^2})} f(\xi) d\xi \big|^2 \big)^{\frac{1}{2}} \big\|_{L^4_{t,x}(w_{B_2(0,N)})}.
\end{split}
\end{equation*}
For $1 \leq m^2 \leq N$ the following square function estimate holds:
\begin{equation*}
\begin{split}
&\quad \big\| \int_{\{ \frac{1}{2} \leq \xi \leq 2 \}} e^{i (x \cdot \xi + t \sqrt{|\xi|^2 + m^2})} f(\xi) d\xi \big\|_{L^4_{t,x}(w_{B_2(0,N)})} \\
&\lesssim \big\| \big( \sum_{\theta : m^{\frac{1}{2}}
N^{-\frac{1}{2}} - \text{interval}} \big| \int_\theta e^{i( x \cdot \xi + t \sqrt{|\xi|^2+ m^2})} f(\xi) d\xi \big|^2 \big)^{\frac{1}{2}} \big\|_{L^4_{t,x}(w_{B_2(0,N)})}.
\end{split}
\end{equation*}
\end{proposition}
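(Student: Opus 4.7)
\emph{Proof plan.} The plan is to reduce both cases to the classical Córdoba--Fefferman $L^4$ square function estimate for a smooth strictly convex curve, by a suitable affine rescaling. On the support $\xi \in [1/2,2]$ a direct computation gives
\begin{equation*}
\varphi''(\xi) = \frac{m^2}{(\xi^2 + m^2)^{3/2}} \sim \kappa, \qquad \kappa := \min(m^2, m^{-1}),
\end{equation*}
so the graph $\xi \mapsto (\xi,\varphi(\xi))$ has curvature of order $\kappa$ uniformly on the relevant arc. Since the natural decoupling scale for the $L^4$ square function on a ball of radius $R$ for a curve of curvature $\kappa$ is $(R\kappa)^{-1/2}$, this already indicates the two claimed scales $(Nm^2)^{-1/2} = N^{-1/2}/m$ (when $m^2 \leq 1$) and $(N/m)^{-1/2} = m^{1/2}N^{-1/2}$ (when $1 \leq m^2 \leq N$).

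To make this precise, fix $\xi_0 = 1$, set
\begin{equation*}
\psi(\xi) = \kappa^{-1}\bigl[\varphi(\xi) - \varphi(\xi_0) - \varphi'(\xi_0)(\xi - \xi_0)\bigr],
\end{equation*}
and introduce the linear change of variables $y = x + t\varphi'(\xi_0)$, $s = \kappa t$. One checks that $\psi(\xi_0) = \psi'(\xi_0) = 0$ and that $\psi''$ is bounded above and below on $[1/2,2]$ uniformly in $m$ in each regime, so $\xi \mapsto (\xi,\psi(\xi))$ is a uniformly convex curve independent (up to constants) of $m$. Up to a pure-phase factor depending only on $t$---which has no effect on $L^4$ norms---the phase transforms to $y\xi + s\psi(\xi)$. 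The Jacobian of $(x,t)\mapsto(y,s)$ equals $\kappa$, and the image of $B_2(0,N)$ is a parallelogram with side lengths $\sim N$ and $\sim \kappa N$. Covering this parallelogram by finitely overlapping balls of radius $\kappa N$ (its shorter side), applying the classical Córdoba--Fefferman $L^4$ square function estimate for $(\xi,\psi(\xi))$ on each, and summing in $L^4$ via the rapid decay of the weight $w_{B_2(0,N)}$, one obtains a decomposition into $\xi$-intervals of size $(\kappa N)^{-1/2}$. Undoing the change of variables recovers exactly the scales $N^{-1/2}/m$ and $m^{1/2}N^{-1/2}$.

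The main point to verify is the validity of the $L^4$ square function estimate for the uniformly convex curve $(\xi,\psi(\xi))$ in place of the parabola. This is standard: the bilinear $L^2$ orthogonality argument underlying Córdoba--Fefferman only requires $\psi''$ to be bounded above and below, since two $\delta$-separated subarcs produce products $\mathcal{E}f_\theta\,\overline{\mathcal{E}f_{\theta'}}$ whose Fourier support has measure $\sim \delta^2$, which closes the bound at the scale $\delta = (\kappa N)^{-1/2}$. Alternatively, one may perform a further local affine rescaling on each short arc that matches $\psi$ exactly to $\xi \mapsto \xi^2$ and invoke the textbook parabolic case. As the argument rests on Plancherel rather than decoupling, no $\varepsilon$-loss is incurred.
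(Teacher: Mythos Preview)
Your argument is correct and follows the same strategy as the paper: both reduce to the classical C\'ordoba--Fefferman $L^4$ biorthogonality for a curve of curvature $\sim 1$ via an affine change of variables, then undo the scaling to read off the interval widths $(\kappa N)^{-1/2}$. The only cosmetic difference is the choice of normalization: the paper rescales in the $\xi$-variable (by $m$ when $m^2\le 1$, and isotropically by $m^{-1}$ in $(\xi,x,t)$ when $m^2\ge 1$) to land on a curve of unit curvature over an elongated space--time region, whereas you leave $\xi$ fixed and shear-and-scale in $(x,t)$; both yield an $N\times \kappa N$ region paired with a unit-curvature curve, and the paper likewise glosses the passage from ball to ellipse as ``immediate,'' so your covering-by-$\kappa N$-balls step is no less rigorous than what is in the text.
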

\begin{proof}
We consider the case $\frac{1}{N} \leq m^2 \leq 1$ first.
The key observation is that $F= w_{B_2(0,N)} \cdot \mathcal{E}_{m^2} f$ has space-time Fourier transform in the $1/N$-neighbourhood of the curve $\{(\xi,\sqrt{|\xi|^2 + m^2}) : \, \frac{1}{2} \leq \xi \leq 2 \}$. Cover the neighbourhood with finitely overlapping rectangles $\theta$ of size $N^{-\frac{1}{2}} / m \times N^{-1}$ with the long side pointing into the tangential direction and short side into normal direction.

 For $m \sim 1$, using the curvature $\sim 1$ of the curve, it is a geometric observation referred to as \emph{biorthogonality} going back to Fefferman \cite{Fefferman1973} that
\begin{equation*}
\int_{\R^2} |F|^4 \lesssim \int \big( \sum_{\theta} |F_{\theta}|^2 \big)^2.
\end{equation*}

Secondly, for $m^2 \sim \frac{1}{N}$ there is nothing to show (since there are only $O(1)$ rectangles $\theta$).

We reduce the general case of $\frac{1}{N} \ll m^2 \ll 1$ to the case of $m \sim 1$ by rescaling. Note that the curve
\begin{equation*}
\{ (\xi, \sqrt{|\xi|^2/m^2 + m^2}) : \, \xi \in [m/2,3m/2] \}
\end{equation*}
has curvature $\sim 1$. So, we can apply the Córdoba--Fefferman square function estimate after a change of variables (the extension to the space-time ellipse $E_{m,N}$ of size $N/m \times N$ is immediate):
\begin{equation*}
\begin{split}
&\quad \big\| \int_{\xi \in [\frac{1}{2},\frac{3}{2}]} e^{i (x \cdot \xi + t \sqrt{|\xi|^2 + m^2})} f(\xi) d\xi \big\|_{L^4_{t,x}(w_{B_2(0,N)})} \\
&\lesssim m^{\frac{1}{4}-1} \big\| \int_{\xi \in [\frac{m}{2},\frac{3m}{2}]} e^{i( x' \cdot \xi' +t \sqrt{|\xi'|^2/m^2 + m^2})} f(\xi'/m) d \xi' \big\|_{L^4(w_{E_{m,N}})} \\
&\lesssim m^{\frac{1}{4}-1} \big\| \big( \sum_{\tilde{\theta}: N^{-\frac{1}{2}}-\text{interval}} \big| \int_{\tilde{\theta}} e^{i( x' \cdot \xi' +t \sqrt{|\xi'|^2/m^2 + m^2})} f(\xi'/m) d \xi' \big|^2 \big)^{\frac{1}{2}} \big\|_{L^4(w_{E_{m,N}})}
\end{split}
\end{equation*}
Now we can reverse the change of variables to find
\begin{equation*}
\lesssim \big\| \big( \sum_{\theta: N^{-\frac{1}{2}}/m-\text{interval}} \big| \int_{\theta} e^{i( x \cdot \xi +t \sqrt{|\xi|^2 + m^2})} f(\xi) d \xi \big|^2 \big)^{\frac{1}{2}} \big\|_{L^4(w_{B_2(0,N)})}.
\end{equation*}
The proof for $m \leq 1$ is complete. We turn to the easier case of $1 \leq m^2 \leq N$. In this case we carry out a change of variables $\frac{\xi}{m} = \xi'$, $d \xi = m d\xi'$ and dually $t' = tm$, $xm = x'$ to find
\begin{equation*}
\begin{split}
&\quad \big\| \int_{ \xi \in [\frac{1}{2},2] } e^{i( x \cdot \xi + t \sqrt{|\xi|^2 + m^2})} f(\xi) d\xi \big\|_{L^4_{t,x}(B_2(0,N))} \\
&\quad = m^{-\frac{1}{2}} \big\| \int_{\xi' \in [m^{-1}/2,2m^{-1}]} e^{i ( x' \cdot \xi' + t' \sqrt{|\xi'|^2 + 1})} f(m \xi') m d\xi' \big\|_{L^4_{t',x'}(B_2(0,mN))}.
\end{split}
\end{equation*}
We use again that the curve $\{(\xi',\sqrt{|\xi'|^2 + 1}) : \xi' \in [m^{-1}/2,2m^{-1}] \}$ has curvature comparable to $1$. The Córdoba--Fefferman square function estimate yields
\begin{equation*}
\begin{split}
&\quad \big\| \int_{\xi' \in [m^{-1},2m^{-1}]} e^{i ( x' \cdot \xi' + t' \sqrt{|\xi'|^2 + 1})} f(m \xi') m d\xi' \big\|_{L^4_{t',x'}(B_2(0,mN))} \\
&\lesssim \big\| \big( \sum_{\theta': (mN)^{-\frac{1}{2}}-\text{interval}} \int_{\theta'} e^{i ( x' \cdot \xi' + t' \sqrt{|\xi'|^2 + 1})} f(m \xi') d\xi' \big)^{\frac{1}{2}} \big\|_{L^4_{t',x'}(w_{B_2(0,mN)})}.
\end{split}
\end{equation*}
Reversing the change of variables yields a square function estimate into intervals of length $m^{\frac{1}{2}} N^{-\frac{1}{2}}$. The proof is complete. 
\end{proof}

\begin{remark}
Also in higher dimensions the case of large $m \geq 1$, corresponding to a uniformly degenerate surface, can be handled through a simple isotropic change of variables.
\end{remark}

\emph{Proof of Theorem~\ref{thm:LocalSmoothing1d}, ctd.}
Let $(\chi_k)_{k \in \Z} \subseteq \mathcal{S}(\R)$ be a partition of unity with $\chi_k(x) = \chi_0(x-N^{\frac{1}{2}} k)$ and $\text{supp}(\hat{\chi}_0) \subseteq B(0,N^{-\frac{1}{2}})$. The family $(\chi_k)$ exists by the Poisson summation formula. We turn to the estimate of
\begin{equation*}
\| e^{it \sqrt{-\Delta + m^2}} P_1 f \|_{L^4_{t,x}(B_2(0,N))}^2 = \big\| \int_{ \{ \frac{1}{2} \leq \xi \leq 2 \} } e^{i( x \cdot \xi + t \sqrt{|\xi|^2 + m^2})} \hat{f}(\xi) d\xi \big\|^2_{L^4_{t,x}(B_2(0,N))}.
\end{equation*}
Applying the square function estimate from Proposition \ref{prop:SquareFunction1d} shows that
\begin{equation*}
\begin{split}
&\lesssim \big[ \int w_{B_2(0,N)} \big| \sum_\theta \big| \int e^{i(x \cdot \xi + t \sqrt{|\xi|^2 + m^2})} \chi_{\theta}(\xi) \hat{f}(\xi) d\xi \big|^2 \big|^2 dx dt \big]^{\frac{1}{2}} \\
&\lesssim \sup_{\substack{\| g \|_{L^2} = 1, \\ g \geq 0}} \int \sum_{\theta} \big| \int e^{i(x \cdot \xi + t \sqrt{|\xi|^2 + m^2})} \chi_{\theta}(\xi) \hat{f}(\xi) d \xi \big|^2 w_{B_2(0,N)} g(x,t) dx dt.
\end{split}
\end{equation*}
We use the decomposition
$ f = \sum_{k \in \Z} \chi_k f = \sum_k f_k.$
Then we obtain
\begin{equation*}
\int e^{i( x \cdot \xi + t \sqrt{|\xi|^2 + m^2})} \chi_{\theta}(\xi) \hat{f}(\xi) d\xi = \sum_{k \in \Z} S_{m^2,\theta} f_k.
\end{equation*}
We have
\begin{equation}
\label{eq:WavePacketOrthogonality}
\big| \sum_{k \in \Z} S_{m^2,\theta} f_k \big|^2 \lesssim_\varepsilon N^\varepsilon \sum_{k \in \Z} \big| S_{m^2,\theta} f_k \big|^2 * \phi_{N^{\frac{1}{2}}} + \text{RapDec}(N) \| f \|_{L^4}^2
\end{equation}
where $\phi_{N^{\frac{1}{2}}}$ denotes an $L^1$-normalized weight adapted to $B(0,N^{\frac{1}{2}})$.
For the proof of \eqref{eq:WavePacketOrthogonality} note that we have the kernel estimate for
\begin{equation*}
S_{m^2,\theta} f_k(x) = \int K_{m^2,\theta}(x,t;y) f_k(y) dy:
\end{equation*}
We see via integration by parts that it holds
\begin{equation*}
K_{m^2,\theta}(x,t;y) \leq C_M (1+N^{-\frac{1}{2}} |x + \frac{t \theta}{\sqrt{\theta^2 + m^2}} - y \big| \big)^{-M}
\end{equation*}
for any $M \in \N$.

Then \eqref{eq:WavePacketOrthogonality} follows from the initial localization of $f_k$. Hence, it suffices to estimate
\begin{equation*}
\sup_{\substack{\| g \|_{L^2} = 1, \\ g \geq 0}} \int \sum_{\theta,k} \big| S_{m^2,\theta} f_k \big|^2 * \phi_{N^{\frac{1}{2}}} \cdot w_{B_2(0,N)} g(x,t) dx dt.
\end{equation*}
We can furthermore dominate
\begin{equation*}
|S_{m^2,\theta} f_k |^2(x) \lesssim \big| (f_k)_\theta (x+ \frac{t \theta}{\sqrt{\theta^2 + m^2}}) \big|^2 * \phi_{N^{\frac{1}{2}}}.
\end{equation*}
By a change of variables we obtain
\begin{equation*}
\begin{split}
&\quad \sup_{\substack{ \| g \|_{L^2} = 1, \\ g \geq 0}} \int \sum_{\theta,k} | (f_k)_\theta (x)|^2 \big( w_{B_2(0,N)} g * \phi_{N^{\frac{1}{2}}} \big)(x-\frac{t \theta}{\sqrt{\theta^2 + m^2}}, t) dt dx \\
&\lesssim \sup_{\substack{ \| g \|_{L^2} = 1, \\ g \geq 0}} \int \sum_{\theta} |f_\theta|^2 \sup_{\theta} \int (\tilde{g} * \phi_{N^{\frac{1}{2}}} )(x- \frac{t \theta}{\sqrt{\theta^2 + m^2}},t) dt dx.
\end{split}
\end{equation*}
Finally, we apply the Cauchy-Schwarz inequality to summarize our findings as
\begin{equation}
\label{eq:ReductionMaximalFunction}
\begin{split}
&\quad \| S_{m^2} f \|_{L^4(B_2(0,N))}^2 \lesssim \big( \int_{\R^2} \big( \sum_\theta |f_\theta|^2 \big)^2 dx \big)^{\frac{1}{2}} \\
&\quad \quad \times \sup_{\substack{ \| g \|_{L^2} = 1, \\ g \geq 0}} \big( \int_{\R} \big| \sup_{\theta} \int_{B_1(0,N)} \big( \tilde{g} * \phi_{N^{\frac{1}{2}}} \big) \big( x- \frac{t \theta}{\sqrt{\theta^2 + m^2}},t \big) dt \big|^2 dx \big)^{\frac{1}{2}}.
\end{split}
\end{equation}

The second term is dominated by a multiple of the Kakeya maximal function with eccentricity $N^{-\frac{1}{2}}$. Indeed, the integral over $t$ and the convolution with $\phi_{N^{\frac{1}{2}}}$ can be interpreted as weighted integral over the tube of size $N^{\frac{1}{2}} \times N$ with long side determined by $\theta$. Consequently,
\begin{equation*}
\sup_{\theta} \big| \int_{B_1(0,N)} \big( \tilde{g} * \phi_{N^{\frac{1}{2}}} \big)(x-\frac{t \theta}{\sqrt{\theta^2+ m^2}}, t) dt \big| \lesssim N \sup_{\substack{(x,0) \in T, \\ T \in \mathbb{T}_{N^{-\frac{1}{2}}}}} \frac{1}{|T|} \int_T |\tilde{g}(y)| dy.
\end{equation*}
Here $\mathbb{T}_{N^{-\frac{1}{2}}}$ denotes the tubes with size $N^{\frac{1}{2}} \times N$. We denote 
\begin{equation*}
\mathcal{M} F(x,y) = \sup_{\substack{(x,0) \in T , \\ T \in \mathbb{T}_{N^{-\frac{1}{2}}}}} \frac{1}{|T|} \int_T |F(y)| dy.
\end{equation*}
Observe that $\mathcal{M} F(x,y) \sim \mathcal{M} F(x',y')$ provided that $|(x,y)-(x',y')| \lesssim N^{\frac{1}{2}}$.
Secondly, let $\tilde{\mathbb{T}}_{N^{-\frac{1}{2}}}$ denote all tubes with eccentricity $N^{-\frac{1}{2}}$ and $\tilde{\mathcal{M}}$ the corresponding maximal function. We recall the following maximal function estimate:

\begin{theorem}[{\cite[Theorem~1.1]{Cordoba1977}}]
The following estimate holds:
\begin{equation*}
\| \tilde{\mathcal{M}} F \|_{L^2(\R^2)} \lesssim \log(N)^2 \| F \|_{L^2(\R^2)}.
\end{equation*}
\end{theorem}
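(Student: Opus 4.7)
The plan is to reduce the $L^2$ bound for $\tilde{\mathcal{M}}$ to a bilinear estimate on tubes via duality, and then exploit the planar geometry of $\delta \times 1$ tubes with $\delta = N^{-1/2}$. After linearizing by choosing, measurably in $x$, a tube $T(x) \in \tilde{\mathbb{T}}_{\delta}$ containing $x$, the estimate $\|\tilde{\mathcal{M}} F\|_{L^2} \lesssim A \|F\|_{L^2}$ is equivalent, by a standard $TT^*$ duality argument, to a bound of the form
\begin{equation*}
\Big\| \sum_T a_T \chi_T \Big\|_{L^2(\R^2)}^2 \lesssim A^2 \delta \sum_T a_T^2,
\end{equation*}
where $\{T\}$ ranges over a maximal $\delta$-separated family of directions (at most $\delta^{-1}$ of them) with $O(\delta^{-1})$ spatially translated tubes per direction and $a_T \geq 0$; the target constant is $A \sim (\log N)^{2}$.

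Expanding the square and organizing the resulting double sum by angle, the central ingredient is the elementary planar estimate
\begin{equation*}
|T \cap T'| \lesssim \frac{\delta^2}{\delta + |\sin \angle(T,T')|},
\end{equation*}
valid for two tubes $T, T'$ of dimensions $\delta \times 1$. I would dyadically decompose the pairs by angle: for $0 \leq k \lesssim \log N$ let $\mathcal{P}_k$ consist of the ordered pairs with $\angle(T,T') \in [2^k \delta, 2^{k+1}\delta)$; each pair in $\mathcal{P}_k$ contributes at most $\delta \cdot 2^{-k}$ to the sum. Applying Cauchy--Schwarz within each scale converts $\sum_{(T,T') \in \mathcal{P}_k} a_T a_{T'} |T \cap T'|$ into a multiple of $\delta \sum_T a_T^2$, weighted by the number of $T'$ that can be angularly and spatially close to a given $T$.

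The main obstacle is the contribution of nearly parallel pairs: within a single angular scale, many tubes of essentially the same direction can spatially overlap a fixed $T$, so the angular count alone is insufficient. Handling this requires a second dyadic decomposition of the pairs by the spatial offset transverse to the common direction, together with the observation that tubes of identical direction are essentially disjoint. The two dyadic decompositions, angular and spatial, each contribute one factor of $\log N$ to the constant, which is the origin of the $\log(N)^{2}$ loss in the statement; summing the geometric series over $k$ produces the claimed bound.
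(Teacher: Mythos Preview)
The paper does not supply a proof of this statement; it is quoted from C\'ordoba's 1977 paper and invoked as a black box in the proof of the one-dimensional local smoothing theorem. There is therefore no argument in the present paper to compare your sketch against.

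As a standalone sketch your strategy is the standard modern route to the planar Kakeya maximal inequality and is sound in outline: linearize, pass to a dual tube-overlap estimate, use the elementary bound $|T\cap T'|\lesssim \delta^{2}/(\delta+\sin\angle(T,T'))$, and sum dyadically over the angle. Two remarks are in order. First, the reduction you describe is not literally a $TT^{*}$ identity: after linearizing $x\mapsto T(x)$ the operator has kernel $\delta^{-1}\chi_{T(x)}(y)$, and it is Schur's test applied to the $TT^{*}$ kernel $\delta^{-2}|T(x)\cap T(z)|$ that produces the logarithm; the weighted tube-sum inequality you wrote down is a correct dual reformulation, but one that requires its own short justification rather than following immediately from $TT^{*}$. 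Second, your explanation of the second logarithm via a ``spatial offset'' decomposition is not how the loss actually arises. With one tube per $\delta$-separated direction the angular Schur sum already gives $\sum_{k}|T_j\cap T_k|\lesssim \delta\log(1/\delta)$, hence $A\lesssim(\log N)^{1/2}$, strictly sharper than the $(\log N)^{2}$ in the statement. The exponent $2$ reflects C\'ordoba's original, less direct argument; your approach, executed cleanly, in fact proves more than is asked.
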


\begin{proof}[Proof~of~Theorem~\ref{thm:LocalSmoothing1d}]
We can now conclude
\begin{equation}
\label{eq:MaximalFunctionConsequence}
\begin{split}
\int_{\R} \big| \sup_{\theta} \int_{B_1(0,N)} \big( \tilde{g} * \phi_{N^{\frac{1}{2}}} \big) \big( x- \frac{t \theta}{\sqrt{\theta^2 + m^2}},t \big) dt \big|^2 dx &\lesssim N \int_{\R^2} |\mathcal{M} \tilde{g}|^2(x,y) dx dy \\
&\lesssim N \int_{\R^2} |\tilde{\mathcal{M}} \tilde{g}|^2(x,y) dx dy \\
&\lesssim N \log(N)^ 2 \| \tilde{g} \|_{L^2}^2.
\end{split}
\end{equation}
The first factor is estimated by a standard square function estimate (see e.g. \cite[Theorem~3]{Cordoba1979}):
\begin{equation}
\label{eq:SquareFunctionEstimate}
\big( \int \big| \sum_{\theta} |f_\theta(x)|^2 \big|^2 \big)^{\frac{1}{2}} \lesssim \| f \|_{L^4}^2.
\end{equation}

We finish the proof of \eqref{eq:LocalSmoothing1d} and hence Theorem \ref{thm:LocalSmoothing1d} by plugging \eqref{eq:MaximalFunctionConsequence} and \eqref{eq:SquareFunctionEstimate} into \eqref{eq:ReductionMaximalFunction}. \end{proof}


\section{Local smoothing in higher dimensions}
\label{section:ProofLocalSmoothingHigher}

For the proof of Theorem \ref{thm:GeneralInitialData}, we use decoupling inequalities which are sensitive to the degeneracy into the radial direction.

\subsection{Decoupling for radially degenerate elliptic surfaces}

In the following we show decoupling estimates for the surface $S = \{(\xi,\sqrt{|\xi|^2 + m^2}) : \, \xi \in \R^d, \; \frac{1}{2} \leq |\xi| \leq 2 \}$. Firstly we suppose that $\frac{1}{N} \leq m^2 \leq 1$. The wave regime $m^2 \lesssim \frac{1}{N}$ is treated separately in Section \ref{subsection:DegenerateDecoupling}. By the considerations in the proof of Proposition \ref{prop:NecessaryConditions}, $S$ has one principal curvature of size $\sim m^2$ in the radial direction and the remaining principal curvatures in angular directions are of size $\sim 1$. This dictates decoupling into rectangles of thickness $N^{-1}$, which have radial length $N^{-\frac{1}{2}} / m$ and in the angular directions size $N^{-\frac{1}{2}}$ because those frequency supports trivialize the Fourier extension operator $\mathcal{E}_{m^2}$. By $(\alpha,\beta)$-sectors we refer to sectors in the unit annulus of length $\alpha$ into the radial direction and length $\beta$ into angular direction. The sums in the decoupling inequalities below are over essentially disjoint sectors covering the unit annulus. For a sector $\theta$ we write
\begin{equation*}
\mathcal{E}_{m^2} f_{\theta} = \int_{\theta} e^{i(x' \cdot \xi + t \sqrt{|\xi|^2 + m^2})} f(\xi) \, d\xi.
\end{equation*}

\begin{proposition}
\label{prop:DecouplingRadiallyDegenerate}
Let $N \in 2^{\N_0}$ and $\frac{1}{N} \leq m^2 \leq 1$. Then the following estimate holds:
\begin{equation*}
\| \mathcal{E}_{m^2} f \|_{L^p_{t,x}(B_{d+1}(0,N))} \lesssim_\varepsilon N^\varepsilon \big( \sum_{\theta: (N^{-\frac{1}{2}}/m, N^{-\frac{1}{2}})-\text{sectors}} \| \mathcal{E}_{m^2} f_{\theta} \|_{L_{t,x}^p(w_{B_{d+1}(0,N)})}^2 \big)^{\frac{1}{2}}
\end{equation*}
provided that $2 \leq p \leq \frac{2(d+2)}{d}$.
\end{proposition}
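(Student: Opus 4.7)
The plan is a two-stage cylindrical decoupling that separately handles the angular directions (principal curvatures $\sim 1$) and the radial direction (principal curvature $\sim m^2$) of
\[
\Sigma_m = \{(\xi,\sqrt{|\xi|^2+m^2}) : 1/2 \leq |\xi| \leq 2\}.
\]
Concretely, first decouple angularly into $(1,N^{-1/2})$-sectors, and then, inside each such sector, decouple radially into arcs of length $N^{-1/2}/m$. The anisotropic cap size $(N^{-1/2}/m) \times (N^{-1/2})^{d-1}$ is exactly the one dictated by the constraint that the surface stay within vertical thickness $1/N$: the radial sag $\alpha^2 m^2$ and angular sag $\beta^2$ both equal $1/N$.

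For Stage 1, within an $N^{-1/2}$-angular sector centred at direction $e_1$, write $\xi=(\xi_1,\xi_\perp)$ with $|\xi_\perp|\lesssim N^{-1/2}$. Then
\[
\sqrt{\xi_1^2+|\xi_\perp|^2+m^2}-\sqrt{\xi_1^2+m^2}=O(|\xi_\perp|^2)=O(1/N),
\]
so $\Sigma_m$ agrees with the light cone $\tau=|\xi|$ in the angular directions modulo the thickness (both dispersions are rotationally symmetric, with the angular variable entering only through $|\xi_\perp|^2$ at leading order). The Bourgain--Demeter $\ell^2$-decoupling for the cone then yields the desired angular decoupling in the range $p\leq 2(d+1)/(d-1)$, which is strictly larger than our target $p\leq 2(d+2)/d$.

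For Stage 2, fix such an $N^{-1/2}$-angular sector. By the same estimate as above, the restriction of $\Sigma_m$ to the sector is, within the thickness, the cylinder over the 1D curve
\[
\Gamma=\{(r,g(r)):r\in[1/2,2]\},\qquad g(r)=\sqrt{r^2+m^2},
\]
whose curvature $g''/(1+g'^2)^{3/2}=m^2/(2r^2+m^2)^{3/2}\sim m^2$ is non-vanishing and uniform. The anisotropic rescaling $r\mapsto r/m$ (dualised in $t$) normalises the curvature of $\Gamma$ to order $1$ and converts the cap size $N^{-1/2}/m$ into $N^{-1/2}$. The classical 1D $\ell^2$-decoupling for non-degenerate curves (valid at $L^p$, $p\leq 6$) then decouples the cylinder into radial arcs of length $N^{-1/2}/m$ at our exponent, since $2(d+2)/d\leq 6$ for all $d\geq 1$. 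Combining the two stages produces the claimed $\ell^2$-decoupling into $(N^{-1/2}/m,N^{-1/2})$-sectors at $L^p$, $p\leq 2(d+2)/d$, with the usual $N^\varepsilon$-loss.

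The main obstacle is implementing Stage 1 rigorously. The Bourgain--Demeter cone decoupling must be applied uniformly in $m$, and the reduction requires the $|\xi_\perp|^2$-correction to be absorbed into the thickness, which is legitimate precisely in the regime $1/N\leq m^2\leq 1$ specified in the proposition (for smaller $m$ the surface is treated in the wave regime of Subsection \ref{subsection:DegenerateDecoupling}; for larger $m$ the argument simplifies by an isotropic rescaling). A secondary bookkeeping issue is propagating the weight $w_{B_{d+1}(0,N)}$ through both decoupling stages and verifying that, after the rescaling used in Stage 2, the spacetime ball $B_{d+1}(0,N)$ transforms into an appropriate cylinder whose associated weight still dominates a translation of $w_{B_{d+1}(0,N)}$ up to rapidly decaying tails.
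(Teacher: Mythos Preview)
Your Stage~1 has a genuine gap. The computation you give,
\[
\sqrt{\xi_1^2+|\xi_\perp|^2+m^2}-\sqrt{\xi_1^2+m^2}=O(|\xi_\perp|^2)=O(1/N),
\]
shows that within an $N^{-1/2}$-angular sector the surface $\Sigma_m$ is $1/N$-close to the \emph{cylinder} $\{\tau=\sqrt{\xi_1^2+m^2}\}$, not to the cone $\{\tau=|\xi|\}$. To invoke the Bourgain--Demeter cone decoupling at scale $N$ as a black box you would need the Fourier support of $w_{B_N}\mathcal{E}_{m^2}f$ to lie in the $1/N$-neighbourhood of the cone, but in fact
\[
\sqrt{|\xi|^2+m^2}-|\xi|=\frac{m^2}{\sqrt{|\xi|^2+m^2}+|\xi|}\sim m^2,
\]
so the support lies only in the $m^2$-neighbourhood. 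Cone decoupling at thickness $m^2$ therefore produces angular sectors of aperture $m$, not $N^{-1/2}$. The heuristic that ``both dispersions are rotationally symmetric with the same leading-order angular dependence'' does not transfer the theorem: the parabolic-rescaling self-similarity that drives the cone proof fails for $\Sigma_m$ once $m^2\gg 1/N$. Consequently your Stage~2, which needs the angular localisation $|\xi_\perp|\lesssim N^{-1/2}$ for the cylinder approximation to be valid at thickness $1/N$, is never legitimately reached.

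The paper resolves this differently. It uses cone decoupling only to reach angular aperture $m$ (Proposition~\ref{prop:DecouplingConeRadiallyDegenerate}), then within each $(N^{-\varepsilon},mN^{-\varepsilon})$-sector performs the anisotropic change of variables $\xi_1\mapsto \xi_1/m$ (radial dilation), which renormalises \emph{all} principal curvatures to $\sim 1$ (Proposition~\ref{prop:SmallDecouplingRadiallyDegenerate}). The full $d$-dimensional Bourgain--Demeter \emph{elliptic} decoupling then simultaneously refines the angular scale from $m$ down to $N^{-1/2}$ and introduces the radial scale $N^{-1/2}/m$. It is this elliptic step---not a one-dimensional curve decoupling---that forces the exponent restriction $p\le 2(d+2)/d$, and it is precisely what bridges the gap between angular scale $m$ and $N^{-1/2}$ that your argument leaves open.
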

For $m \sim 1$ this is evident from the $\ell^2$-decoupling for uniformly elliptic surfaces due to Bourgain--Demeter \cite[Section~7]{BourgainDemeter2015}.

\smallskip

For $\frac{1}{N} \leq m^2 \leq 1$, the proof is carried out in two steps: Firstly, we use a Pramanik--Seeger \cite{PramanikSeeger2007} argument to decouple sectors of radial length $N^{-\varepsilon}$ with aperture $N^{-\varepsilon}$ into sectors of radial length $N^{-\varepsilon}$ and aperture size $m$.
\begin{proposition}
\label{prop:DecouplingConeRadiallyDegenerate}
Let $2 \leq p \leq \frac{2(d+1)}{d-1}$ and suppose that $f: \R^d \to \C$ is supported in a sector of radial length $N^{-\varepsilon}$ and aperture $N^{-\varepsilon}$ contained in the unit annulus. Then the following decoupling inequality holds:
\begin{equation*}
\| \mathcal{E}_{m^2} f \|_{L^p_{t,x}(w_{B_{d+1}(0,N)})} \lesssim_\delta N^\delta \big( \sum_{\theta : (N^{-\varepsilon},m)-\text{sectors}} \| \mathcal{E}_{m^2} f_\theta \|^2_{L_{t,x}^p(w_{B_{d+1}(0,N)})} \big)^{\frac{1}{2}}.
\end{equation*}
\end{proposition}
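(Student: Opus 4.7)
The plan is to reduce the decoupling to the Bourgain--Demeter $\ell^2$-cone decoupling \eqref{eq:ConeDecouplingIntroduciton} applied at scale $R = m^{-2}$. The arithmetic matches perfectly: cone decoupling at scale $R$ produces angular caps of aperture $R^{-1/2}$, and choosing $R = m^{-2}$ yields the target aperture $m$. The hypothesis $m^2 \geq 1/N$ ensures $R \leq N$, so the $R$-balls from the cone decoupling fit inside $B_{d+1}(0,N)$.

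First I would write the phase as $\sqrt{|\xi|^2 + m^2} = |\xi| + h(\xi)$ with $h(\xi) = m^2/(|\xi| + \sqrt{|\xi|^2 + m^2})$, which satisfies $|\partial^\alpha h(\xi)| \lesssim_\alpha m^2$ uniformly for $|\xi| \sim 1$. Thus
\[
\mathcal{E}_{m^2} f(x,t) = \int e^{i t h(\xi)} e^{i(x \cdot \xi + t |\xi|)} f(\xi) \, d\xi
\]
is a $t$-modulation of the cone extension operator $\mathcal{E}_c$. For $|t| \lesssim R = m^{-2}$ one has $|t h(\xi)| \lesssim 1$, so the Fourier series / linearization machinery of Subsection \ref{subsection:Linearization} expands
\[
e^{i t h(\xi)} \chi_\tau(\xi) = \sum_{k \in \Z^d} \beta_k(t) e^{i k \cdot \xi} \chi_\tau(\xi), \qquad |\beta_k(t)| \lesssim (1+|k|)^{-(d+1)},
\]
where $\chi_\tau$ is a smooth cutoff to the starting sector $\tau$. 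Each Fourier mode becomes a spatial translate of the cone extension, and Minkowski's inequality reduces the $\mathcal{E}_{m^2}$-decoupling on a single $B(0,R)$-ball to the cone-extension decoupling for $\mathcal{E}_c f_\tau$.

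Second, apply the Bourgain--Demeter cone decoupling at scale $R = m^{-2}$ to decouple $\mathcal{E}_c f_\tau$ into aperture-$m$ angular subcaps of $\tau$, with loss $R^\delta = m^{-2\delta} \leq N^\delta$, for $p \leq 2(d+1)/(d-1)$. Since $f$ is radially supported on an interval of length $N^{-\varepsilon}$, the resulting subcaps are precisely the $(N^{-\varepsilon}, m)$-sectors. Globalizing from $B_{d+1}(0,R)$ to $B_{d+1}(0,N)$ is standard: cover $B_{d+1}(0,N)$ by $O((N/R)^{d+1})$ translates of $B_{d+1}(0,R)$, apply the previous decoupling on each translate, and sum using the $\ell^2$-structure. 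The polynomial covering loss is absorbed into $N^\delta$.

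The main technical obstacle is the time-localization in the Fourier series step: for $|t| \gg R$ the modulation $e^{i t h(\xi)}$ is too oscillatory in $\xi$ for a uniformly convergent expansion, and the bound on the coefficients $\beta_k$ degenerates. This is exactly why the globalization is done by first tiling $B(0,N)$ with $R$-balls: on each such ball the Fourier expansion is well-behaved. A secondary cosmetic point is that Bourgain--Demeter is usually stated for full-annulus supports; the restriction of $f$ to the $N^{-\varepsilon}$-subsector $\tau$ is harmless since the angular caps outside $\tau$ contribute zero to the $\ell^2$-sum.
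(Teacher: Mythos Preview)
Your argument is correct and rests on the same identity the paper uses, namely $\sqrt{|\xi|^2+m^2}-|\xi|=m^2/(\sqrt{|\xi|^2+m^2}+|\xi|)=O(m^2)$, but the two proofs package this differently. The paper observes that $w_{B_{d+1}(0,N)}\mathcal{E}_{m^2}f$ has space--time Fourier support in the $1/N$-neighbourhood of the Klein--Gordon surface, which by the displayed identity is contained in the $m^2$-neighbourhood of the cone (since $1/N\le m^2$). It then invokes the Bourgain--Demeter cone decoupling directly in its \emph{Fourier-support} formulation: any $F$ with Fourier support in the $m^2$-neighbourhood of the truncated cone decouples into aperture-$m$ sectors with loss $m^{-\delta}\le N^{\delta}$. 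Setting $F_\theta=\mathcal{E}_{m^2}f_\theta$ finishes the proof in one line.

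Your route instead stays with the \emph{extension-operator} formulation: you peel off $e^{ith(\xi)}$ by a Fourier series on $R=m^{-2}$-balls, apply cone extension decoupling at scale $R$, and then globalize to $B(0,N)$ by parallel decoupling. Two remarks. First, on a ball centred at $(x_0,t_0)$ with $|t_0|$ possibly as large as $N$ you must write $t=t_0+s$ and absorb the fixed phase $e^{it_0h(\xi)}$ into $f$ before expanding $e^{ish(\xi)}$; this is implicit in your description but worth stating. Second, the parallel-decoupling step (tile, apply Minkowski in the tile sum using $p\ge 2$) is actually \emph{lossless}, so there is no ``polynomial covering loss'' to absorb. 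What you have written is effectively an in-place proof of the equivalence between the two formulations of decoupling in this special case; the paper's version is shorter because it quotes the Fourier-support formulation directly.
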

\begin{proof}
This is a consequence of the decoupling inequality for the cone. Note the following:
\begin{equation*}
\sqrt{|\xi|^2 + m^2} - |\xi| = \frac{m^2}{\sqrt{|\xi|^2+ m^2} + |\xi|}.
\end{equation*}
This means the $\frac{1}{N}$-neighbourhood of $\{(\xi,\sqrt{|\xi|^2+m^2}) : \frac{1}{2} \leq |\xi| \leq 2\}$ is contained in the $m^2$-neighbourhood of $\{ (\xi,|\xi|) : \frac{1}{2} \leq |\xi| \leq 2 \}$. The decoupling inequality due to Bourgain--Demeter \cite[Theorem~1.2]{BourgainDemeter2015} for a function $F: \R^{d+1} \to \C$ with Fourier support in the $m^2$-neighbourhood of the cone is given by
\begin{equation*}
\begin{split}
&\quad \big\| \sum_{\theta: (N^{-\varepsilon},m) - \text{sector}} F_{\theta} \big\|_{L^{\frac{2(d+1)}{d-1}}(w_{B_{d+1}(0,N))})} \\
&\lesssim_\varepsilon m^\varepsilon \big( \sum_{\theta: (N^{-\varepsilon},m) - \text{sector}} \| F_{\theta} \|^2_{L^{\frac{2(d+1)}{d-1}}(w_{B_{d+1}(0,N)})} \big)^{\frac{1}{2}}.
\end{split}
\end{equation*}
The proof is concluded by letting $F_{\theta} = \mathcal{E}_{m^2} f_{\theta}$.
\end{proof}

So far we have not used the ellipticity of the surface into the radial direction. This explains why the decoupling exponent in the above proposition matches the one for the cone. We use anisotropic rescaling to take into account the partial degeneracy in the radial direction:
\begin{proposition}
\label{prop:SmallDecouplingRadiallyDegenerate}
Let $N \in 2^{\N_0}$, $N \gg 1$, $\frac{1}{N} \leq m^2 \leq 1$, $\varepsilon > 0$ and $\text{supp}(f)$ is contained in a $(N^{-\varepsilon},mN^{-\varepsilon})$-sector within the unit annulus. Then the following estimate holds:
\begin{equation*}
\| \mathcal{E}_{m^2} f \|_{L^p(B_{d+1}(0,N))} \lesssim_\varepsilon N^\varepsilon \big( \sum_{\theta: (N^{-\frac{1}{2}} /m , N^{-\frac{1}{2}})-\text{sector} } \| \mathcal{E}_{m^2} f_{\theta} \|^2_{L^p(w_{B_{d+1}(0,N)})} \big)^{\frac{1}{2}}
\end{equation*}
for $2 \leq p \leq \frac{2(d+2)}{d}$.
\end{proposition}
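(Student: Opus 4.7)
The plan is to reduce Proposition \ref{prop:SmallDecouplingRadiallyDegenerate} to the standard Bourgain--Demeter $\ell^2$-decoupling for uniformly elliptic surfaces via an anisotropic parabolic rescaling adapted to the Hessian of $\psi(\xi) = \sqrt{|\xi|^2 + m^2}$. After a rotation, assume the sector is centered at $\xi_0 = e_1$. Since the Hessian of $\psi$ at $\xi_0$ has eigenvalue $\sim m^2$ in the radial direction and $\sim 1$ in each angular direction, the substitution $\xi = \xi_0 + A\zeta$ with $A = \mathrm{diag}(N^{-\varepsilon}, mN^{-\varepsilon}, \ldots, mN^{-\varepsilon})$ simultaneously maps the $(N^{-\varepsilon}, mN^{-\varepsilon})$-sector to the unit cube in $\zeta$ and equalizes both principal curvatures at the scale $m^2 N^{-2\varepsilon}$. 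Introducing dual variables $\tilde{x}_1 = N^{-\varepsilon}(x_1 + t\,\partial_1 \psi(\xi_0))$, $\tilde{x}_j = mN^{-\varepsilon} x_j$ for $j \geq 2$, and $\tilde{t} = m^2 N^{-2\varepsilon} t$, the phase becomes $\tilde{x}\cdot\zeta + \tilde{t}\,Q(\zeta) + E(\tilde{t},\zeta)$, with $Q$ a positive-definite quadratic form of unit size and $E$ collecting the cubic and higher Taylor remainders. Crucially, under this rescaling the target $(N^{-1/2}/m, N^{-1/2})$-sectors in $\xi$ both map to $\zeta$-cubes of the common side length $N^{\varepsilon - 1/2}/m$, so they play the role of the standard isotropic decoupling scale $\delta$.

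Next, I would invoke the Fourier series device of Subsection \ref{subsection:Linearization} on the unit $\zeta$-cube to absorb the perturbation $E$: bounded $\zeta$-derivatives (uniform in $N$ and $m$) yield a rapidly convergent expansion $e^{iE(\tilde t,\zeta)} = \sum_{k} c_k(\tilde t)\, e^{ik\cdot \zeta}$ with $|c_k| \lesssim (1+|k|)^{-(d+1)}$, reducing the decoupling problem to a summable family of translates of the canonical extension operator for the isotropic elliptic surface $\zeta_{d+1} = Q(\zeta)$. On the image $\Omega$ of $B_{d+1}(0, N)$ in the $(\tilde x, \tilde t)$-coordinates---a box of dimensions $N^{1-\varepsilon} \times (mN^{1-\varepsilon})^{d-1} \times R$, with $R = m^2 N^{1-2\varepsilon}$---I would tile $\Omega$ by balls of radius $R$ and apply the Bourgain--Demeter $\ell^2$-decoupling theorem for uniformly elliptic hypersurfaces on each ball. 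Summing over the tiles via Minkowski's inequality $\ell^p_{B}(\ell^2_\theta) \hookrightarrow \ell^2_\theta(\ell^p_{B})$ (valid for $p \geq 2$) upgrades the per-ball weighted estimates to a global $\ell^2$-decoupling on $\Omega$ with constant $\lesssim_\varepsilon R^\varepsilon \leq N^\varepsilon$. Reversing the anisotropic rescaling yields the claimed inequality in the range $2 \leq p \leq \frac{2(d+2)}{d}$, which matches the Bourgain--Demeter range for elliptic surfaces in $\R^{d+1}$.

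The main obstacle will be the control of the cubic remainder $E$: by explicit differentiation one checks that its phase contribution can reach $\tilde t \cdot N^{-\varepsilon} \sim m^2 N^{1-3\varepsilon}$, so when $m \sim 1$ it is not a bounded perturbation and cannot be absorbed into the amplitude pointwise. The Fourier series argument circumvents this because what one actually needs is boundedness of the $\zeta$-derivatives of $E$ on the unit cube, uniformly in $N$ and $m$, and this holds by the same Cauchy--Kowalevskaya-type majorization used in Section \ref{section:LinearizationReduction}; the translates that result are then handled by translation invariance of the elliptic Fourier extension operator. A secondary point is that $\Omega$ is a genuinely anisotropic box rather than a ball, but this is accommodated cleanly by the Minkowski covering step and loses only an $R^\varepsilon \leq N^\varepsilon$ factor beyond the Bourgain--Demeter constant.
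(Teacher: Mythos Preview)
Your overall strategy---anisotropic rescaling to reach a uniformly elliptic situation and then invoke Bourgain--Demeter, tiling an anisotropic box by balls and summing via Minkowski---is the right one, and the scales you identify all match. However, the Fourier series step contains a genuine gap.

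You claim that the $\zeta$-derivatives of the remainder $E(\tilde t,\zeta)$ are bounded uniformly in $N$ and $m$. They are not. Writing $E(\tilde t,\zeta)=t\,E_3(\zeta)$ with $t=\tilde t/(m^2N^{-2\varepsilon})$, the leading cubic terms of $E_3$ come from $\partial^3_{111}\psi(\xi_0)\sim m^2$ and $\partial^3_{1jj}\psi(\xi_0)\sim 1$ for $j\geq 2$ (all other third derivatives of $\psi$ at $\xi_0=e_1$ vanish by parity), and after the substitution $\xi=\xi_0+A\zeta$ each contributes a term of size $\sim m^2 N^{-3\varepsilon}$ to $E_3$. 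Hence $|\partial_\zeta^\alpha E|\lesssim |t|\cdot m^2N^{-3\varepsilon}\lesssim m^2N^{1-3\varepsilon}$ for every multi-index $\alpha$, which for $m\sim 1$ is unbounded. The Fourier coefficients $c_k$ then only obey $|c_k|\lesssim (m^2N^{1-3\varepsilon})^{d+1}(1+|k|)^{-(d+1)}$, and the resulting loss destroys the estimate. The appeal to the Cauchy--Kowalevskaya majorization of Section~\ref{section:LinearizationReduction} is misplaced: that argument exploits the extra $N^{2(1-k)}$ suppression of the $k$th time-Taylor coefficient built into the Hermite parametrix, which is entirely absent here---you are merely Taylor-expanding $\sqrt{|\xi|^2+m^2}$ in $\xi$.

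The repair is simply not to extract the quadratic part at all. The paper uses the single rescaling $\xi_1^*=m\xi_1$ (dually $x_1'=x_1/m$), after which the \emph{full} phase $\psi(\xi^*)=\sqrt{|\xi_1^*|^2/m^2+|\xi'|^2+m^2}$ already satisfies $\partial^2_{\xi^*\xi^*}\psi=c\,I_d+O(N^{-\varepsilon})$ throughout the rescaled support. No remainder has to be absorbed; Bourgain--Demeter applies directly on $N$-balls tiling the box $E_{m,N}=[-N,N]\times[-N/m,N/m]\times[-N,N]^{d-1}$, and undoing the scaling turns $N^{-1/2}$-balls in $\xi^*$ into $(N^{-1/2}/m)\times N^{-1/2}$-rectangles in $\xi$, which are then checked to be comparable to the claimed sectors. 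Your own scheme is salvaged by the same observation: after $\xi=\xi_0+A\zeta$ and $\tilde t=m^2N^{-2\varepsilon}t$, the function $\zeta\mapsto (m^2N^{-2\varepsilon})^{-1}\psi(\xi_0+A\zeta)$ is itself uniformly elliptic on $|\zeta|\leq 1$, so the extraction of $Q$ and the Fourier series detour are unnecessary.
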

\begin{proof}
We suppose by spherical symmetry that the radial direction of the sector is $e_1$ and consider the rescaled function
\begin{equation*}
\psi(\xi_1,\xi') = \varphi(\xi_1/m, \xi'); \quad \varphi(\xi) = \sqrt{\xi_1^2 + |\xi'|^2 + m^2}.
\end{equation*}
We compute
\begin{equation*}
\partial^2_{ij} \varphi(\xi)= \frac{(|\xi|^2 + m^2) \delta_{ij} - \xi_i \xi_j}{(|\xi|^2 + m^2)^{\frac{3}{2}}}, \quad \xi_1 \in [a,a+N^{-\varepsilon}], \; \xi' \in B_{d-1}(0,mN^{-\varepsilon}).
\end{equation*}
Consequently,
\begin{equation*}
\partial^2_{11} \varphi = \frac{|\xi'|^2 + m^2}{(|\xi|^2 + m^2)^{\frac{3}{2}}} = O(m^2), \quad \partial^2_{ii} \varphi = \frac{|\xi_1|^2 + m^2 + |\xi'|^2 - \xi_i^2}{(|\xi|^2 + m^2)^{\frac{3}{2}}} = O(1), \quad (i \geq 2).
\end{equation*}
Moreover,
\begin{equation*}
\begin{split}
\partial^2_{ij} \varphi &= - \frac{\xi_i \xi_j}{(|\xi|^2 + m^2)^{\frac{3}{2}}} = O(m N^{- \varepsilon}) \text{ for } 1 = i < j \leq d, \\
\partial^2_{ij } \varphi &= - \frac{\xi_i \xi_j}{(|\xi|^2 + m^2)^{\frac{3}{2}}} = O(m^2 N^{-2 \varepsilon}) \text{ for } 2 \leq i < j \leq d.
\end{split}
\end{equation*}
Anisotropic rescaling shows that indeed for the surface $\{(\xi,\psi(\xi)) : \, \xi \in [ma,ma+mN^{-\varepsilon}] \times B_{d-1}(0,mN^{-\varepsilon}) \}$ we find all the principal curvatures to be comparable to $1$ because $\partial_{\xi \xi}^2 \psi = 1 + O(N^{-\varepsilon})$. 

This allows us to apply the Bourgain--Demeter \cite[Section~7]{BourgainDemeter2015} decoupling result for elliptic surfaces as follows: Write
\begin{equation}
\label{eq:AuxiliaryDecouplingElliptic}
\begin{split}
&\quad \| \mathcal{E}_{m^2} f \|_{L^p(B_{d+1}(0,N))} \\
&\leq \big\| w_{B_{d+1}(0,2N)} \int e^{i(x \cdot \xi + t \sqrt{|\xi|^2 + m^2})} f(\xi) d\xi \big\|_{L_{t,x}^p([-N,N] \times [-N,N] \times [-N,N]^{d-1})}. 
\end{split}
\end{equation}
We use the change of variables $\xi_1^* = m \xi_1$, $x_1' = m^{-1} x_1$ and write \\
$\tilde{f}(\xi^*) = m^{\frac{1}{p}-1} f(m \xi_1^*, (\xi')^*)$. We find
\begin{equation*}
\eqref{eq:AuxiliaryDecouplingElliptic} = \big\| \int e^{i( x' \cdot \xi^* + t \psi(\xi^*))} \tilde{f}(\xi^*) d \xi^* \big\|_{L_{t,x'}^p([-N,N] \times [-\frac{N}{m},\frac{N}{m}] \times [-N,N]^{d-1})}.
\end{equation*}
Let $E_{m,N} = [-N,N] \times [-\frac{N}{m},\frac{N}{m}] \times [-N,N]^{d-1}$. We cover $E_{m,N}$ with finitely overlapping balls of size $N$:
\begin{equation}
\label{eq:CoveringEllipsoid}
\big\| \int e^{i ( x' \cdot \xi^* + t \psi(\xi^*))} \tilde{f}(\xi^*) d\xi^* \big\|^p_{L_{t,x}^p(E_{m,N})} \lesssim \sum_{B_N} \big\| \int e^{i(x' \cdot \xi^* + t \psi(\xi^*))} \tilde{f}(\xi^*) d \xi^* \big\|^p_{L^p(B_N)}.
\end{equation}
By translation invariance, we can apply decoupling on every ball:
\begin{equation*}
\begin{split}
&\quad \big\| \int e^{i( x' \cdot \xi^* + t \psi(\xi^*))} \tilde{f}(\xi^*) d\xi^* \big\|_{L^p(B_N)} \\ &\lesssim_\varepsilon N^\varepsilon \big( \sum_{\theta: N^{-\frac{1}{2}}-\text{ball}} \big\| \int e^{i(x' \cdot \xi^* + t \psi(\xi^*))} \tilde{f}_\theta(\xi^*) d\xi^* \big\|^2_{L^p(w_{B_N})} \big)^{\frac{1}{2}}.
\end{split}
\end{equation*}
Plugging this into \eqref{eq:CoveringEllipsoid} and applying Minkowski's inequality yields
\begin{equation*}
\begin{split}
&\quad \big\| \int e^{i(x' \cdot \xi^* + t \psi(\xi^*))} \tilde{f}(\xi^*) d\xi^* \big\|_{L^p(E_{m,N})} \\
&\lesssim_\varepsilon N^\varepsilon \big( \sum_{\theta: N^{-\frac{1}{2}}-\text{ball}} \int e^{i(x' \cdot \xi^* + t \psi(\xi^*))} \tilde{f}(\xi^*) d\xi^* \big\|^2_{L^p(E_{m,N})} \big)^{\frac{1}{2}}.
\end{split}
\end{equation*}
Now we can reverse the anisotropic scaling to obtain the claim. We obtain a decoupling into rectangles of size $N^{-\frac{1}{2}}/m \times N^{-\frac{1}{2}}$ into directions $\xi_1$ and $\xi'$:
\begin{equation*}
\| \mathcal{E}_{m^2} f \|_{L^p(B_{d+1}(0,N))} \lesssim_\varepsilon N^\varepsilon \big( \sum_{r: (N^{-\frac{1}{2}} /m , N^{-\frac{1}{2}})-\text{rectangle} } \| \mathcal{E}_{m^2} f_{r} \|^2_{L^p(w_{B_{d+1}(0,N)})} \big)^{\frac{1}{2}}
\end{equation*}

We need to check that the rectangles are comparable to sectors of size $N^{-\frac{1}{2}}/m$ and $N^{-\frac{1}{2}}$ into radial and angular direction. This is indeed the case: the radial direction at the edge of the $(N^{-\varepsilon},mN^{-\varepsilon})$-sector is given by
\begin{equation*}
\mathfrak{n}_r = \frac{a e_1 + mN^{-\varepsilon} e'}{\sqrt{a^2+m^2 N^{-2 \varepsilon}}}, \quad e' \cdot e_1 = 0, \quad \| e' \| = 1.
\end{equation*}
Taking into account the length $N^{-\frac{1}{2}}/m$ into the radial direction we find the difference into angular direction to be of size
\begin{equation*}
| e' \cdot (\mathfrak{n}_r \cdot \frac{N^{-\frac{1}{2}}}{m}) | \lesssim N^{-\frac{1}{2}-\varepsilon}.
\end{equation*}
Since $N^{-\frac{1}{2}-\varepsilon} \ll N^{-\frac{1}{2}}$ this shows overlap into angular direction to be of $O(1)$. Angular directions are given by
\begin{equation*}
\mathfrak{n}_{\omega} = \frac{ mN^{-\varepsilon} e_1 + a e'' }{\sqrt{a^2+m^2 N^{-2 \varepsilon}}}, \quad e'' \cdot e_1 = 0, \quad \| e'' \| = 1.
\end{equation*}
Clearly, 
\begin{equation*}
|e'' \cdot( \mathfrak{n}_{\omega} \cdot N^{-\frac{1}{2}})| \lesssim N^{-\frac{1}{2}}.
\end{equation*}
This verifies comparability of the $N^{-\frac{1}{2}}/m \times N^{-\frac{1}{2}}$-rectangles and completes the proof.
\end{proof}

We conclude the proof of Proposition \ref{prop:DecouplingRadiallyDegenerate} by successive applications of Propositions \ref{prop:DecouplingConeRadiallyDegenerate} and \ref{prop:SmallDecouplingRadiallyDegenerate}. 
\begin{proof}[Proof~of~Proposition~
\ref{prop:DecouplingRadiallyDegenerate}]
We decompose by Minkowski's inequality and the Cauchy Schwarz inequality
\begin{equation*}
\| \mathcal{E}_{m^2} f \|_{L^p_{t,x}(B_{d+1}(0,N))} \lesssim_\varepsilon N^{\frac{d \varepsilon}{2}} \big( \sum_{\theta_1: N^{-\varepsilon}-\text{ball}} \| \mathcal{E}_{m^2} f_{\theta_1} \|^2_{L^p_{t,x}(w_{B_{d+1}(0,N)})} \big)^{\frac{1}{2}}.
\end{equation*}
Now we can use Proposition \ref{prop:DecouplingConeRadiallyDegenerate} to find
\begin{equation*}
\| \mathcal{E}_{m^2} f_{\theta_1} \|_{L^p_{t,x}} \lesssim_\delta N^{\delta} \big( \sum_{\substack{\theta_2: (N^{-\varepsilon},m)-\text{sector} \\ \text{covering } \theta_1}} \| \mathcal{E}_{m^2} f_{\theta_2} \|_{L^p_{t,x}(w_{B_{d+1}}(0,N))}^2 \big)^{\frac{1}{2}}.
\end{equation*}
We use another trivial decoupling to further decompose the sectors in angular variables from size $m$ to size $mN^{-\varepsilon}$ which incurs another factor of $N^{\frac{d \varepsilon}{2}}$. Collecting the previuos estimates we have
\begin{equation}
\label{eq:AuxDecouplingI}
\| \mathcal{E}_{m^2} f \|_{L^p_{t,x}(B_{d+1}(0,N))} \lesssim_\varepsilon N^{(d+1) \varepsilon} \big( \sum_{\theta_3: (N^{-\varepsilon},m N^{-\varepsilon})-\text{sector}} \| \mathcal{E}_{m^2} f_{\theta_3} \|_{L^p_{t,x}(w_{B_{d+1}}(0,N))}^2 \big)^{\frac{1}{2}}.
\end{equation}
At this point we can invoke Proposition \ref{prop:SmallDecouplingRadiallyDegenerate} to find for $\theta_3$ like in the previous display:
\begin{equation}
\label{eq:AuxDecouplingII}
\| \mathcal{E}_{m^2} f_{\theta_3} \|_{L^p_{t,x}(w_{B_{d+1}}(0,N))} \lesssim_\varepsilon N^{\varepsilon} \big( \sum_{\substack{\theta_4: (N^{-\frac{1}{2}}/m,N^{-\frac{1}{2}})-\text{sector} \\ \text{ covering } \theta_3}} \| \mathcal{E}_{m^2} f_{\theta_4} \|_{L^p_{t,x}(w_{B_{d+1}(0,N)})}^2 \big)^{\frac{1}{2}}.
\end{equation}
Taking \eqref{eq:AuxDecouplingI} and \eqref{eq:AuxDecouplingII} together we find
\begin{equation*}
\| \mathcal{E}_{m^2} f \|_{L^p_{t,x}(B_{d+1}(0,N))} \lesssim_\varepsilon N^{C \varepsilon} ( \sum_{\theta: (N^{-\frac{1}{2}}/m,N^{-\frac{1}{2}})-\text{sector}} \| \mathcal{E}_{m^2} f_{\theta} \|^2_{L^p_{t,x}(w_{B_{d+1}(0,N)})} \big)^{\frac{1}{2}}.
\end{equation*}
Letting $\varepsilon \to \varepsilon / C$ the proof is complete.
\end{proof}

\subsection{Decoupling for uniformly degenerate surfaces}

We turn to the easier case of $1 \ll m^2 \lesssim N^2$. Recall that in this case the surface $\{(\xi, \sqrt{|\xi|^2 + m^2}) : \frac{1}{2} \leq |\xi| \leq 2 \}$ is degenerate as well in the radial as angular direction to the same extent. We show the following:
\begin{proposition}
\label{prop:UniformlyDegenerateDecoupling}
Let $2 \leq p \leq \frac{2(d+1)}{d-1}$ and $1 \ll m^2 \lesssim N$. Then the following estimate holds:
\begin{equation*}
\big\| \mathcal{E}_{m^2} f \big\|_{L^p_{t,x}(w_{B_{N}})} \lesssim_\varepsilon N^\varepsilon \big( \sum_{\theta: m^{\frac{1}{2}} N^{-\frac{1}{2}}-\text{ball}} \| \mathcal{E}_{m^2} f_{\theta} \|^2_{L^p_{t,x}(w_{B_N})} \big)^{\frac{1}{2}}.
\end{equation*}
\end{proposition}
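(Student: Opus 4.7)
The plan is to reduce the statement to the standard $\ell^2$-decoupling for a uniformly elliptic hypersurface via an isotropic rescaling, the higher-dimensional analogue of the one-dimensional computation carried out at the end of Proposition~\ref{prop:SquareFunction1d}. Concretely, I would substitute $\xi = m \eta$ in the frequency integral, which for any admissible $f$ yields
\begin{equation*}
\mathcal{E}_{m^2} f(x,t) = m^d \int_{|\eta| \sim 1/m} e^{i(m x \cdot \eta + t m \sqrt{|\eta|^2 + 1})} f(m\eta) \, d\eta,
\end{equation*}
and then perform the dual rescaling $x' = m x$, $t' = m t$. This turns the phase into $x' \cdot \eta + t' \sqrt{|\eta|^2 + 1}$, the ball $B_{d+1}(0,N)$ into $B_{d+1}(0,mN)$, and the Jacobians in the resulting $L^p$-identity account precisely for the prefactor on both sides of the desired decoupling.

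Next I would apply Bourgain--Demeter $\ell^2$-decoupling to the rescaled extension operator. The key observation is that the rescaled surface
\begin{equation*}
\Sigma = \{(\eta, \sqrt{|\eta|^2 + 1}) : |\eta| \sim 1/m\}
\end{equation*}
is a piece of the hyperboloid concentrated near the apex, where the Hessian
\begin{equation*}
\partial^2_{\eta \eta} \sqrt{|\eta|^2 + 1} = \frac{(1+|\eta|^2) I - \eta \otimes \eta}{(1+|\eta|^2)^{3/2}} = I + O(m^{-2})
\end{equation*}
is uniformly comparable to the identity since $|\eta| \lesssim 1/m \ll 1$. Thus the piece $\Sigma$ is a uniformly elliptic $d$-dimensional hypersurface in $\R^{d+1}$ in the sense required by \cite[Section~7]{BourgainDemeter2015}. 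Since the integration radius $mN$ satisfies $(mN)^{-1/2} \ll 1/m$ (because $m^2 \lesssim N$), the natural decoupling scale $(mN)^{-1/2}$ is smaller than the diameter $1/m$ of the piece, so the decoupling theorem applies: covering $B_{d+1}(0,mN)$ by a bounded overlap family of $N$-balls, using translation invariance on each, and summing via Minkowski's inequality gives
\begin{equation*}
\Big\| \int e^{i(x' \cdot \eta + t' \sqrt{|\eta|^2+1})} f(m\eta) \, d\eta \Big\|_{L^p_{t',x'}(w_{B_{mN}})} \lesssim_\varepsilon N^\varepsilon \Big( \sum_{\tau} \| (\cdot)_\tau \|_{L^p(w_{B_{mN}})}^2 \Big)^{1/2}
\end{equation*}
for $\tau$ ranging over $(mN)^{-1/2}$-caps, in the range of exponents covered by elliptic decoupling.

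Finally, reversing the change of variables, a $(mN)^{-1/2}$-cap in $\eta$ corresponds to an $m \cdot (mN)^{-1/2} = (m/N)^{1/2}$-cap in $\xi$, which matches the sectors appearing in the statement. Collecting the $L^p$ and Jacobian factors on both sides yields the claimed inequality. I expect the only delicate point to be verifying that the anisotropy of the weight $w_{B_{mN}}$ in the $(t',x')$-coordinates, after pulling back, majorizes $w_{B_N}$ up to acceptable factors; this is handled by a standard averaging argument using the Schwartz decay of the weight, as in the one-dimensional case. The range of $p$ is simply the Bourgain--Demeter elliptic range inherited from the application on $\Sigma$; no cone- or cylinder-type degeneracy appears here, which is why the argument is genuinely simpler than in the radially-degenerate regime treated by Proposition~\ref{prop:DecouplingRadiallyDegenerate}.
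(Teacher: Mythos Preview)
Your approach is exactly the paper's: rescale isotropically by $m$, apply Bourgain--Demeter elliptic $\ell^2$-decoupling on the uniformly curved hyperboloid piece $\{(\eta,\sqrt{|\eta|^2+1}):|\eta|\sim 1/m\}$ at radius $mN$, then undo the scaling to land on $(m/N)^{1/2}$-caps. One correction: apply the decoupling directly on $B_{mN}$ at scale $(mN)^{-1/2}$ rather than after covering by $N$-balls---decoupling on $N$-balls would only yield $N^{-1/2}$-caps, which is coarser than the $(mN)^{-1/2}$-caps you (correctly) record in the next line; no covering step is needed here since the rescaled domain is already a round ball.
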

\begin{proof}
This follows from a change of variables: Let $t' = mt$, $x'=mx$, $\xi' = \xi/m$ to find
\begin{equation*}
\begin{split}
&\quad \big\| \int_{\{ \frac{1}{2} \leq |\xi| \leq 2\}} e^{i(x \cdot \xi + t \sqrt{|\xi|^2+ m^2})} f(\xi) d\xi \big\|_{L^p(w_{B_{d+1}}(0,N))} \\
&= \big\| \int_{ \{ \frac{1}{2m} \leq |\xi'| \leq \frac{2}{m} \} } e^{i(x' \cdot \xi' + t' \sqrt{|\xi'|^2+ 1})} \tilde{f}(\xi') d \xi' \big\|_{L^p_{t',x'}(w_{B_{d+1}(0,mN)})}.
\end{split}
\end{equation*}
Since $\{(\xi', \sqrt{|\xi'|^2+1}) : |\xi'| \sim \frac{1}{m} \}$ is a uniformly elliptic surface, the Bourgain--Demeter $\ell^2$-decoupling theorem is applicable at scale $mN$. This yields
\begin{equation*}
\begin{split}
&\quad \big\| \int_{ \{ \frac{1}{2m} \leq |\xi'| \leq \frac{2}{m} \}} e^{i(x' \cdot \xi' + t' \sqrt{|\xi'|^2 + 1})} \tilde{f}(\xi') d\xi' \big\|_{L^p_{t,x}(w_{B_{d+1}(0,Nm)})} \\ &\lesssim_\varepsilon N^\varepsilon \big( \sum_{\theta: (Nm)^{-\frac{1}{2}}-\text{ball}} \| \mathcal{E}_1 \tilde{f}_\theta \|^2_{L^p_{t',x'}(w_{B_{d+1}(0,Nm)})} \big)^{\frac{1}{2}}.
\end{split}
\end{equation*}
Now we reverse the change of variables to find
\begin{equation*}
\big( \sum_{\theta: (Nm)^{-\frac{1}{2}}-\text{ball}} \| \mathcal{E}_1 \tilde{f}_\theta \|^2_{L^p_{t',x'}(w_{B_{d+1}(0,Nm)})} \lesssim \big( \sum_{\theta: m^{\frac{1}{2}} N^{-\frac{1}{2}}-\text{ball}} \| \mathcal{E}_{m^2} f_\theta \big\|^2_{L^p_{t,x}(w_{B_{d+1}(0,N))}} \big)^{\frac{1}{2}}.
\end{equation*}
Taking the estimates together we complete the proof.
\end{proof}

\subsection{Degenerate cases}
\label{subsection:DegenerateDecoupling}

Finally, we record the degenerate cases $m^2 \lesssim \frac{1}{N}$, in which case the characteristic surface is indistinguishable from the cone, and $m^2 \gtrsim N$, in which case the characteristic surface is essentially flat. In the latter case there is no dispersion for $S_{m^2}$ anymore, and we have the following:
\begin{proposition}
Let $2 \leq p \leq \infty$, $N \in 2^{\N_0}$, $m^2 \gtrsim N$. Then the following estimate holds:
\begin{equation*}
\| S_{m^2} f \|_{L^p_{t,x}(B_{d+1}(0,N))} \lesssim N^{\frac{1}{p}} \| f \|_{L^p(\R^d)}.
\end{equation*}
\end{proposition}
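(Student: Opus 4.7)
The plan is to reduce the space–time bound to a uniform fixed-time $L^p\to L^p$ estimate for $S_{m^2}(\cdot,t)$, exploiting the fact that in the regime $m^2\gtrsim N$ the characteristic surface is essentially flat, so that the propagator ``looks like'' a bounded Fourier multiplier uniformly in $t$. H\"older's inequality in time yields
\[
\|S_{m^2}f\|_{L^p_{t,x}(B_{d+1}(0,N))}\leq (2N)^{1/p}\sup_{|t|\leq N}\|S_{m^2}f(\cdot,t)\|_{L^p_x(\R^d)},
\]
so it suffices to prove the fixed-time estimate
\[
\sup_{|t|\leq N,\;m^2\gtrsim N}\|S_{m^2}f(\cdot,t)\|_{L^p_x(\R^d)}\lesssim \|f\|_{L^p(\R^d)}.
\]

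For the fixed-time step I would factor out the $\xi$-independent phase by writing $S_{m^2}f(\cdot,t) = e^{itm}\,T_{t,m}f$, where $T_{t,m}$ is the Fourier multiplier with symbol $\mu_t(\xi) := e^{it(\sqrt{|\xi|^2+m^2}-m)}\chi_1(\xi)$, compactly supported in $\{\tfrac12\leq|\xi|\leq 2\}$. Plancherel gives the $L^2$ case with constant one since $|\mu_t|\leq\chi_1\leq 1$, and by complex interpolation it remains to handle the endpoint $p=\infty$, which via Young's inequality reduces to showing that the convolution kernel $K_t(x) := \int_{\R^d}e^{ix\cdot\xi}\mu_t(\xi)\,d\xi$ satisfies $\|K_t\|_{L^1(\R^d)}\lesssim 1$ uniformly in $t$ and $m$. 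The kernel analysis would proceed by stationary/non-stationary phase in $\xi$: the critical-point equation $x + t\nabla_\xi\sqrt{|\xi|^2+m^2}=0$ forces stationary points into the shell $|x|\lesssim t/m\leq \sqrt N$, where the Hessian in $\xi$ has size $t/m$ and the stationary-phase formula yields the pointwise bound $|K_t(x)|\lesssim (m/t)^{d/2}$. Outside this shell, integration by parts in $\xi$, using that all derivatives of the phase $t(\sqrt{|\xi|^2+m^2}-m)$ are of order $t/m$ on the support of $\chi_1$, delivers rapid decay in $|x|$.

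The main obstacle is assembling these pointwise estimates into a genuinely uniform $L^1$ control on $K_t$, since the stationary shell has volume $\sim (t/m)^d$ which formally offsets the $(m/t)^{d/2}$ pointwise gain. A cleaner route is to use the Schr\"odinger approximation: as $\sqrt{|\xi|^2+m^2}-m = |\xi|^2/(2m) + O(|\xi|^4/m^3)$ and the quartic correction contributes at most $|t|\cdot|\xi|^4/m^3\lesssim N/m^3\lesssim 1/\sqrt N$ to the phase on the support of $\chi_1$, the kernel $K_t$ coincides, up to a harmless modulation and negligible error, with the frequency-localized Schr\"odinger kernel at rescaled time $\tau = t/(2m)\leq\sqrt N/2$. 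The desired uniform $L^1$ bound is then the quantitative form of the statement that the Klein--Gordon propagator $e^{it\sqrt{-\Delta+m^2}}\chi_1(D)$ is bounded on $L^p(\R^d)$ uniformly in $|t|\leq N$ once $m^2\gtrsim N$, which is the precise incarnation of ``no dispersion in the mass-dominated regime.'' Once established, Step~1 immediately gives the space–time bound, and the exponent $N^{1/p}$ is sharp because it already arises from the trivial time integration against the uniform fixed-time estimate.
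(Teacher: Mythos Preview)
Your reduction to a uniform fixed-time bound via H\"older in $t$ is the right starting point, and factoring out $e^{itm}$ is sensible. The gap is in the endpoint step: the uniform $L^1$ bound on $K_t$ that you need is \emph{false} at the borderline $m^2\sim N$, and your own bookkeeping already shows why. With all principal curvatures of $\sqrt{|\xi|^2+m^2}$ of size $\sim 1/m$ on $\{|\xi|\sim 1\}$, stationary phase gives $|K_t|\sim (m/t)^{d/2}$ on a shell of radius $\sim t/m$, hence
\[
\|K_t\|_{L^1(\R^d)}\;\sim\;(t/m)^{d/2}.
\]
For $t=N$ and $m=\sqrt N$ this is $N^{d/4}$, not $O(1)$. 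Your Schr\"odinger reduction is carried out correctly (the quartic remainder is indeed $O(N^{-1/2})$), but it does not help: it identifies $K_t$ with the unit-frequency Schr\"odinger kernel at effective time $\tau=t/(2m)\lesssim\sqrt N$, and that kernel has $L^1$ norm $\sim\tau^{d/2}$. The sentence ``the desired uniform $L^1$ bound is then the quantitative form of \ldots'' is circular --- it restates what has to be proved. In fact the isotropic focusing example $\hat f=\chi_1\,e^{-it_0\sqrt{|\xi|^2+m^2}}$ with $t_0\sim N$ shows that the fixed-time $L^p\to L^p$ bound (and hence the stated proposition) fails for $p>2(d+1)/d$ when $m^2\sim N$: one gets $\|S_{m^2}f\|_{L^p_{t,x}}\gtrsim m^{1/p}$ against $\|f\|_{L^p}\sim (m/N)^{d(1/2-1/p)}$.

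The paper records this proposition without proof, so there is nothing to compare your argument against. What makes your scheme go through is the stronger hypothesis $m\gtrsim N$ (equivalently $t/m\lesssim 1$): then the phase $t(\sqrt{|\xi|^2+m^2}-m)=t|\xi|^2/(\sqrt{|\xi|^2+m^2}+m)$ is $O(1)$ on $\mathrm{supp}\,\chi_1$, the multiplier $\mu_t$ lies in $S^0$ uniformly, and $\|K_t\|_{L^1}\lesssim 1$ by straightforward integration by parts. This is also the threshold at which the decoupling in Proposition~\ref{prop:UniformlyDegenerateDecoupling} trivializes (the $m^{1/2}N^{-1/2}$-balls become of unit size precisely when $m\sim N$), which suggests the intended hypothesis here is $m\gtrsim N$ rather than $m^2\gtrsim N$.
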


For $m^2 \lesssim \frac{1}{N}$, the characteristic surface is in the $1/N$-neighbourhood of the cone $\{(\xi,|\xi|) : \xi \in \R^d, \; \frac{1}{2} \leq |\xi| \leq 2 \}$ and we have the following cone decoupling estimate:
\begin{proposition}
\label{prop:ApproxConeDecoupling}
Let $N \in 2^{\N_0}$ and $m^2 \lesssim \frac{1}{N}$. Then the following estimate holds:
\begin{equation*}
\| \mathcal{E}_{m^2} f \|_{L^p_{t,x}(B_{d+1}(0,N))}  \lesssim_\varepsilon N^\varepsilon \big( \sum_{\theta: N^{-\frac{1}{2}}-\text{sector}} \| \mathcal{E}_{m^2} f_{\theta} \|_{L^p_{t,x}(w_{B_{d+1}(0,N)})}^2 \big)^{\frac{1}{2}}
\end{equation*}
provided that $2 \leq p \leq \frac{2(d+1)}{d-1}$.
\end{proposition}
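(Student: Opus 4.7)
The plan is a direct reduction to the Bourgain--Demeter $\ell^2$-decoupling \eqref{eq:ConeDecouplingIntroduciton} for the light cone. The starting observation, already exploited in the proof of Proposition~\ref{prop:DecouplingConeRadiallyDegenerate}, is that
\[
\sqrt{|\xi|^2 + m^2} - |\xi| = \frac{m^2}{\sqrt{|\xi|^2+m^2} + |\xi|} = O(m^2),
\]
so in the regime $m^2 \lesssim N^{-1}$ the Klein--Gordon hypersurface $\Sigma_m = \{(\xi, \sqrt{|\xi|^2+m^2}) : |\xi| \in [1/2,2]\}$ is contained in the $O(N^{-1})$-neighborhood of the truncated light cone $\Gamma = \{(\xi,|\xi|) : |\xi| \in [1/2,2]\}$.

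First I would note that the space-time Fourier transform of $\mathcal{E}_{m^2} f$ is carried by $\Sigma_m$, and is therefore supported in the $N^{-1}$-neighborhood of $\Gamma$. I would then apply the cone decoupling \eqref{eq:ConeDecouplingIntroduciton} at scale $R = N$ in its standard $R^{-1}$-neighborhood formulation (cf.\ \cite[Section~4]{BourgainDemeter2015}), which decouples any such function into pieces localized to the $N^{-1/2}$-sectors of the neighborhood up to the critical exponent $p = \frac{2(d+1)}{d-1}$. The angular decomposition of the neighborhood of $\Gamma$ projects in $\xi$ to the $N^{-1/2}$-sectors $\theta$ of the unit annulus appearing in the statement, and the corresponding piece of $\mathcal{E}_{m^2} f$ coincides with $\mathcal{E}_{m^2} f_\theta$, because the spatial Fourier projection $P_\theta$ commutes with $\mathcal{E}_{m^2}$ (the latter being a time-dependent spatial Fourier multiplier).

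There is no genuine obstacle here: once one places $\Sigma_m$ inside the $N^{-1}$-thickening of $\Gamma$, the inequality is a one-line consequence of Bourgain--Demeter. The only bookkeeping point is to verify that the sectorial wave packets on $\Gamma$ match the Klein--Gordon sectors $\theta$ up to the weight $w_{B_{d+1}(0,N)}$, which is immediate from the commutativity just noted. The result is essentially sharp in this regime precisely because, at scale $N$, the Klein--Gordon surface is indistinguishable from the cone.
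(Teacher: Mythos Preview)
Your argument is correct and is essentially identical to the paper's own proof: both reduce immediately to the Bourgain--Demeter cone decoupling \cite[Theorem~1.2]{BourgainDemeter2015} via the identity $\sqrt{|\xi|^2+m^2}-|\xi|=m^2/(\sqrt{|\xi|^2+m^2}+|\xi|)$, which places $\Sigma_m$ inside the $N^{-1}$-neighborhood of the truncated cone when $m^2\lesssim N^{-1}$. The paper states this in two lines; your additional remarks on the commutativity of $P_\theta$ with $\mathcal{E}_{m^2}$ are accurate bookkeeping but not a different route.
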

\begin{proof}
This follows from from observing that
\begin{equation*}
\sqrt{|\xi|^2 + m^2} - |\xi| = \frac{m^2}{\sqrt{|\xi|^2 + m^2} + |\xi|}.
\end{equation*}
Since $m^2 \lesssim \frac{1}{N}$, the characteristic surface is in the $1/N$-neighbourhood of the cone $\{(\xi,|\xi|) : \xi \in \R^d, \; \frac{1}{2} \leq |\xi| \leq 2 \}$,  and the claim is immediate from \cite[Theorem~1.2]{BourgainDemeter2015}.
\end{proof}


\subsection{Proofs of Theorems \ref{thm:CompactlySupportedInitialData} and \ref{thm:GeneralInitialData}}

With the suitable decoupling estimates at hand, the argument is standard and we shall be brief. 
We start with the proof of Theorem \ref{thm:CompactlySupportedInitialData}:
\begin{proof}[Proof~of~Theorem~\ref{thm:CompactlySupportedInitialData}]
We use the reductions from Sections \ref{section:Preliminaries} and \ref{section:LinearizationReduction}. Since $u_0$ is compactly supported, we can choose $N_0$ large enough such that $\text{supp}(u_0) \subseteq B(0,N_0)$. For $N \leq N_0$ we can use pointwise estimates to find
\begin{equation*}
\| \cos(t \sqrt{\mathcal{H}}) P_{\lesssim N_0} f \|_{L^p_{t,x}([0,1] \times \R^d)} \lesssim N_0^C \| f \|_{L^p_x}.
\end{equation*}
This is based on the fact that the simultaneous localization in phase space $\{|x| \lesssim N_0, \; |\xi| \lesssim N_0 \}$ corresponds to the contribution of the spectral projection $\mathfrak{P}_{\lesssim N_0} u_0$.

So, we can focus on $N \geq N_0$, for which we suppose that the parametrix exists on $[0,1]$. By the reductions from Section \ref{section:LinearizationReduction} (see Proposition \ref{prop:KleinGordonReduction}) it suffices to prove the estimate
\begin{equation}
\label{eq:AuxLocalSmoothingWaveRegime}
\| S_{m^2} f \|_{L^p_{t,x}(B_{d+1}(0,N))} \lesssim N^{s+\frac{1}{p}} \| f \|_{L^p(\R^d)}
\end{equation}
to obtain
\begin{equation*}
\| \cos(t \sqrt{\mathcal{H}}) P_N f \|_{L^p_{t,x}([0,1] \times \R^d)} \lesssim N^{s} \| f \|_{L^p(\R^d)}.
\end{equation*}
In the present context we have $m^2 \lesssim \frac{1}{N}$ by the support conditions on $f$ and choosing $N$ large enough.

\medskip

To prove \eqref{eq:AuxLocalSmoothingWaveRegime} for $m^2 \lesssim \frac{1}{N}$ we use Proposition \ref{prop:ApproxConeDecoupling}: Write
\begin{equation*}
S_{m^2} f_{\theta} = \int_{\theta} e^{i(x' \cdot \xi + t \sqrt{|\xi|^2 + m^2})} \hat{f}(\xi) d\xi.
\end{equation*}
We find for $2 \leq p \leq \frac{2(d+1)}{d-1}$
\begin{equation*}
\| S_{m^2} f \|_{L^p_{t,x}(B_{d+1}(0,N))} \lesssim_\varepsilon N^\varepsilon \big( \sum_{\theta: N^{-\frac{1}{2}}-\text{sector}} \| S_{m^2} f_{\theta} \|_{L^p}^2 \big)^{\frac{1}{2}}.
\end{equation*}
It is straight-forward that
\begin{equation*}
S_{m^2} f_{\theta}(x,t) = \int K_\theta(x,t;y) f(y) dy
\end{equation*}
with
\begin{equation*}
\sup_{x \in \R^d} \int_{\R^d} |K_\theta(x,t;y)| dy + \sup_{y \in \R^d} \int_{\R^d} |K_\theta(x,t;y)| dx \leq C
\end{equation*}
for $|t| \lesssim N$. Hence, we obtain from a fixed-time kernel estimate
\begin{equation*}
\big( \sum_{\theta: N^{-\frac{1}{2}}-\text{sector}} \| S_{m^2} f_{\theta} \|^2_{L^p(w_{B_{d+1}}(0,N))} \big)^{\frac{1}{2}} \lesssim N^{\frac{1}{p}} \big( \sum_{\theta: N^{-\frac{1}{2}}-\text{sector}} \| P_\theta f \|^2_{L^p(\R^d)} \big)^{\frac{1}{2}}.
\end{equation*}
Recall the standard kernel estimate for $\theta$ an $N^{-\frac{1}{2}}$-sector:
\begin{equation*}
\| P_{\theta} f \|_{L^p(\R^d)} \lesssim \| f \|_{L^p(\R^d)}.
\end{equation*}
This implies by interpolation between $p=2$ and $p=\infty$:
\begin{equation}
\label{eq:lpLpSummation}
\big( \sum_{\theta:N^{-\frac{1}{2}}} \| P_{\theta} f \|^p_{L^p(\R^d)} \big)^{\frac{1}{p}} \lesssim \| f \|_{L^p(\R^d)}.
\end{equation}

Now we can use H\"older's inequality to change summation from $\ell^2$ to $\ell^p$ and \eqref{eq:lpLpSummation} to obtain
\begin{equation*}
\begin{split}
\big( \sum_{\theta: (1,N^{-\frac{1}{2}})-\text{sector}} \| P_\theta f \|^2_{L^p(\R^d)} \big)^{\frac{1}{2}} &\lesssim N^{\frac{d-1}{2} \big( \frac{1}{2}- \frac{1}{p} \big)} \big( \sum_{\theta: N^{-\frac{1}{2}}-\text{sector}} \| P_{\theta} f \|^p_{L^p(\R^d)} \big)^{\frac{1}{p}} \\
&\lesssim N^{\frac{d-1}{2} \big( \frac{1}{2} - \frac{1}{p} \big)} \| f \|_{L^p(\R^d)}.
\end{split}
\end{equation*}
This shows the local smoothing estimate claimed in Theorem \ref{thm:CompactlySupportedInitialData} for $p=\frac{2(d+1)}{d-1}$.

 The claim for $p \geq \frac{2(d+1)}{d-1}$ follows from pointwise estimates for large $q$ provided by Proposition \ref{prop:PointwiseEstimates} and interpolation. This is similar to the argument at the end of the proof of Theorem \ref{thm:LocalSmoothing1d}. The proof is complete.
\end{proof}

Now we turn to the proof of Theorem \ref{thm:GeneralInitialData}:

\begin{proof}[Proof~of~Theorem~\ref{thm:GeneralInitialData}]
We use Proposition \ref{prop:KleinGordonReduction} to reduce to Klein-Gordon smoothing estimates
\begin{equation}
\label{eq:KleinGordonSmoothingGeneralData}
\| S_{m^2} f \|_{L^p_{t,x}(B_{d+1}(0,N))} \lesssim N^s \| f \|_{L^p(\R^d)}.
\end{equation}
But in the present context we can no longer suppose that $m^2 \lesssim \frac{1}{N}$. However, for $m^2 \lesssim \frac{1}{N}$, the estimates from the proof of Theorem \ref{thm:CompactlySupportedInitialData} actually show  estimates with less derivative loss than presently claimed. The larger derivative loss stems from the elliptic regime. For $\frac{1}{N} \lesssim m^2 \lesssim 1$ we can use the decoupling estimates from Proposition \ref{prop:DecouplingRadiallyDegenerate} to find for $2 \leq p \leq \frac{2(d+2)}{d}$:
\begin{equation*}
\| S_{m^2} f \|_{L^p_{t,x}(B_{d+1}(0,N))} \lesssim_\varepsilon N^\varepsilon \big( \sum_{\theta: (N^{-\frac{1}{2}}/m,N^{-\frac{1}{2}})-\text{sector}} \| S_{m^2} f_{\theta} \|^2_{L^p_{t,x}(w_{B_{d+1}(0,N))})} \big)^{\frac{1}{2}}.
\end{equation*}
This strategy is the same like in the proof of Theorem \ref{thm:GeneralInitialData}: we use a kernel estimate to estimate the propagation at fixed times, which incurs a factor of $N^{\frac{1}{p}}$ by additional integration in $t$, then we use H\"older's inequality to change from $\ell^2$ to $\ell^p$. Finally we can use a kernel estimate for frequency projections like in \eqref{eq:lpLpSummation} to conclude the argument.

\medskip

The only estimate which changes compared to the above is H\"older's inequality to increase from $\ell^2$ to $\ell^p$: We have
\begin{equation*}
\begin{split}
&\quad \big( \sum_{\theta: (N^{-\frac{1}{2}}/m,N^{-\frac{1}{2}})-\text{sector}} \| S_{m^2} f_{\theta} \|^2_{L^p} \big)^{\frac{1}{2}} \\
 &\lesssim N^{\frac{1}{p}} \big( \sum_{\theta:(N^{-\frac{1}{2}}/m,N^{-\frac{1}{2}})-\text{sector}} \| P_{\theta} f \|^2_{L^p(\R^d)} \big)^{\frac{1}{2}} \\
&\lesssim N^{\frac{1}{p}} N^{\frac{d-1}{2} \big( \frac{1}{2} - \frac{1}{p} \big)} (mN^{\frac{1}{2}})^{\frac{1}{2}-\frac{1}{p}} \big( \sum_{\theta: (N^{-\frac{1}{2}}/m,N^{-\frac{1}{2}})-\text{sector}} \| P_\theta f \|_{L^p}^p \big)^{\frac{1}{p}} \\
&\lesssim N^{\frac{1}{p}+\frac{d-1}{2} \big( \frac{1}{2} - \frac{1}{p} \big)} (mN^{\frac{1}{2}})^{\frac{1}{2}-\frac{1}{p}} \| f \|_{L^p}.
\end{split}
\end{equation*}
The constant becomes largest for $m \sim 1$ in which case we find
\eqref{eq:KleinGordonSmoothingGeneralData} with $s = \frac{1}{p} + \frac{d}{2} \big( \frac{1}{2} - \frac{1}{p} \big)$. 

\smallskip

It remains to check the case $m \gtrsim 1$: An application of Proposition \ref{prop:UniformlyDegenerateDecoupling} yields
\begin{equation*}
\| S_{m^2} f \|_{L^p_{t,x}(B_{d+1}(0,N))} \lesssim_\varepsilon N^\varepsilon \big( \sum_{\theta: (m^{\frac{1}{2}} N^{-\frac{1}{2}})-\text{ball}} \| S_{m^2} f_{\theta} \|_{L^p_{t,x}(w_{B_{d+1}(0,N)})}^2 \big)^{\frac{1}{2}}.
\end{equation*}
Recall that for $m^2 \gtrsim N$ the evolution of $S_{m^2}$ is already trivial on the time-scale $N$ and the above holds without $N^\varepsilon$-loss. So it remains the case $1 \lesssim m^2 \lesssim N$. We can apply a kernel estimate followed by H\"older's inequality and the summation property of Fourier projections \eqref{eq:lpLpSummation} to find
\begin{equation*}
\begin{split}
&\quad \big( \sum_{\theta: (m^{\frac{1}{2}} N^{-\frac{1}{2}})-\text{ball}} \| S_{m^2} f_{\theta} \|_{L^p_{t,x}(w_{B_{d+1}(0,N)})}^2 \big)^{\frac{1}{2}} \\
 &\lesssim N^{\frac{1}{p}} \big( \sum_{\theta: (m^{\frac{1}{2}} N^{-\frac{1}{2}})-\text{ball}} \| P_{\theta} f \|_{L^p_{x}(\R^d)}^2 \big)^{\frac{1}{2}} \\
&\lesssim N^{\frac{1}{p}} (m^{-\frac{1}{2}} N^{\frac{1}{2}})^{d \big( \frac{1}{2} - \frac{1}{p} \big) } \big( \sum_{\theta: (m^{\frac{1}{2}} N^{-\frac{1}{2}})-\text{ball}} \| P_{\theta} f \|_{L^p_{x}(\R^d)}^p \big)^{\frac{1}{p}} \\
&\lesssim N^{\frac{1}{p}} (m^{-\frac{1}{2}} N^{\frac{1}{2}})^{d \big( \frac{1}{2} - \frac{1}{p} \big) } \| f \|_{L^p(\R^d)}.
\end{split}
\end{equation*}
Again the constant is maximized in case $m \sim 1$. In conclusion we have proved the estimate for $p=\frac{2(d+2)}{d}$:
\begin{equation*}
\| S_{m^2} f \|_{L^p_{t,x}(B_{d+1}(0,N))} \lesssim_\varepsilon N^\varepsilon N^{\frac{1}{p}} N^{\frac{d}{2} \big( \frac{1}{2} - \frac{1}{p} \big)} \| f \|_{L^p(\R^d)},
\end{equation*}
which is uniform in $m \in 2^{\Z}$.

\medskip

For $p= \frac{2(d+2)}{d}$ by Proposition \ref{prop:KleinGordonReduction} we obtain the smoothing estimate
\begin{equation*}
\| \cos(t \sqrt{\mathcal{H}}) f \|_{L^p_{t,x}([0,1] \times \R^d)} \lesssim \| f \|_{L^p_{s}(\R^d)}
\end{equation*}
for $s > d \big( \frac{1}{2} - \frac{1}{p} \big) - \frac{1}{p}$. To extend this for $p > \frac{2(d+2)}{d}$ we again interpolate with pointwise estimates for large $q$ provided by Proposition \ref{prop:PointwiseEstimates}. This completes the proof.
\end{proof}

\section{Implications for Bochner--Riesz estimates}
\label{section:BochnerRiesz}
\subsection{Global Bochner--Riesz means for the Hermite operator}

Next, we turn to the implications for the Hermite Bochner--Riesz means. By spectral calculus we define for $\alpha \geq 0$ and $x_+ = \max(x,0)$ the operator
\begin{equation}
\label{eq:BochnerRieszHermite}
\mathcal{B}^\alpha_{\lambda}(\mathcal{H}) f(x) = \big( 1 - \frac{\mathcal{H}}{\lambda} \big)^\alpha_+ f(x).
\end{equation}

In the following we say $\| \mathcal{B}^\alpha_\lambda(\mathcal{H}) \|_{L^p \to L^p}$ is uniformly bounded when it is bounded uniform in $\lambda \geq 1$.
In one dimension Askey--Wainger \cite{AskeyWainger1965} showed $L^p$-boundedness of $\mathcal{B}^\alpha_{\lambda}(\mathcal{H})$ for $\alpha = 0$ provided that $p \in (4/3,4)$. Moreover, Thangavelu \cite{Thangavelu1989} showed uniform boundedness for $p \in [1,\infty]$ provided that $\alpha > 1/6$. Hence, for $d=1$ it holds uniform boundedness provided that
\begin{equation*}
\alpha > \tilde{\gamma}(1,p) = \max \big( \frac{2}{3} \big| \frac{1}{2} - \frac{1}{p} \big| - \frac{1}{6}, 0 \big).
\end{equation*}
Thangavelu moreover showed necessity of $\alpha \geq \tilde{\gamma}(1,p)$. This settles the question in one dimension up to endpoints.

By a transplantation result due to Kenig--Stanton--Tomas \cite[Theorem~3]{KenigStantonTomas1982} (see also preceding work by Mitjagin \cite{Mitjagin1974}) uniform boundedness of the Hermite Bochner--Riesz means imply uniform boundedness for the Euclidean Bochner--Riesz means:
\begin{equation}
\label{eq:BochnerRieszEuclidean}
\mathcal{B}_{\lambda}^\alpha (\Delta) f(x) = \big( 1 + \frac{\Delta}{\lambda} \big)^{\alpha}_+ f(x).
\end{equation}
Recall that $\mathcal{B}^\alpha_\lambda(\Delta)$ is bounded on $L^p(\R)$ for any $\alpha \geq 0$ and $p \in (1,\infty)$  uniformly in $\lambda \geq 1$ due to boundedness of the Hilbert transform. The Bochner--Riesz conjecture states that \eqref{eq:BochnerRieszEuclidean} is bounded for $p \in [1,\infty] \backslash \{ 2 \}$ and $d \geq 2$ if and only if
\begin{equation*}
\alpha > \delta(d,p) := \max \big( d \big| \frac{1}{2} - \frac{1}{p} \big| - \frac{1}{2}, 0 \big).
\end{equation*}
Recall that the Bochner--Riesz conjecture implies the restriction conjecture (cf. \cite{Tao1999}) and $\alpha > 0$ is necessary due to a result by Fefferman \cite{Fefferman1971}. In two dimensions the Bochner--Riesz conjecture is settled (see  \cite{CarlesonSjoelin1972,Fefferman1973,
Hoermander1973,Carbery1983} and \cite{Stein1993} for further references). It remains open in dimensions $d \geq 3$. The most recent progress is reported by Guo--Wang--Zhang \cite{GuoWangZhang2022}. In the following we denote the operator norm of an operator $A: L^p(\R^d) \to L^p(\R^d)$ by $\| A \|_p$.

\medskip

Based on the transplantation result in higher dimensions it appeared conceivable that the boundedness properties of \eqref{eq:BochnerRieszHermite} and \eqref{eq:BochnerRieszEuclidean} coincide.
However, recently, Lee--Ryu \cite{LeeRyu2022} disproved this conjecture for the global Bochner--Riesz means \eqref{eq:BochnerRieszHermite}. They showed the following \cite[Proposition~4.2]{LeeRyu2022}:
\begin{proposition}[Necessary~conditions~for~Hermite~Bochner--Riesz~means]
Let $d \geq 1$ and $2 < p \leq \infty$. The uniform bound
$\| \mathcal{B}_\lambda^{\alpha}(\mathcal{H}) \|_p \leq C$ holds only if $\alpha > \delta(d,p)$ and
\begin{equation*}
\alpha \geq \gamma(d,p) = - \frac{1}{3p} + \frac{d}{3} \big( \frac{1}{2} - \frac{1}{p} \big).
\end{equation*}
\end{proposition}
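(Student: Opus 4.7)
The plan is to treat the two lower bounds separately. The inequality $\alpha > \delta(d,p)$ can be established by transplantation: by the Kenig--Stanton--Tomas theorem \cite{KenigStantonTomas1982} (cited in the excerpt), uniform boundedness of $\mathcal{B}^{\alpha}_{\lambda}(\mathcal{H})$ on $L^p(\R^d)$ transfers to uniform boundedness of the Euclidean Bochner--Riesz operators $\mathcal{B}^{\alpha}_{\lambda}(\Delta)$ on $L^p(\R^d)$. For the latter, $\alpha > \delta(d,p)$ is classically necessary: Herz's radial kernel computation yields $\alpha \geq \delta(d,p)$ in the range $p > \tfrac{2d}{d-1}$ together with failure of the endpoint, while Fefferman's disproof of the ball multiplier conjecture \cite{Fefferman1971} forces $\alpha > 0$ for all $p \neq 2$ in dimension $d \geq 2$. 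Combining these statements gives $\alpha > \delta(d,p)$ in full.

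For the sharper condition $\alpha \geq \gamma(d,p)$, which is intrinsic to the Hermite setting and binding for $p$ around the critical exponent $p_c = \tfrac{2(d+1)}{d}$ (at which $\gamma(d,p_c)=0$), I would test $\mathcal{B}^{\alpha}_{\lambda}(\mathcal{H})$ against a family of Hermite eigenstates concentrated on the classical turning sphere $\{|x| = \sqrt{\lambda}\}$. A convenient choice in dimension $d \geq 2$ is the highest-weight \emph{coherent} eigenfunction
\begin{equation*}
  \psi_{\lambda}(x) = c_\lambda\,(x_1 + i x_2)^n\,e^{-|x|^2/2},
\end{equation*}
where $2n+d$ is the Hermite eigenvalue and $c_\lambda$ enforces the $L^2$-normalisation; in dimension one one takes an appropriately chosen single Hermite eigenfunction near the top of the spectrum. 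Airy asymptotics at the turning point concentrate $\psi_\lambda$ on an annulus of radial width $\sim \lambda^{-1/6}$ around $\{|x|=\sqrt{\lambda}\}$, with peak $L^\infty$-density $\sim \lambda^{-1/12}$; from this one reads off the resulting two-regime $L^p$-asymptotics of $\|\psi_\lambda\|_{L^p}$.

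The test function proper is a short spectral window $f_\lambda = \sum_{k \in I_\lambda} \mathfrak{p}_k \psi_\lambda$ with $I_\lambda \subseteq [\lambda - O(1), \lambda]$, so that $\mathcal{B}^{\alpha}_{\lambda}(\mathcal{H}) f_\lambda$ is of order $\lambda^{-\alpha}$ times a function with essentially the same Airy-annulus profile as $\psi_\lambda$. Computing the ratio $\|\mathcal{B}^{\alpha}_{\lambda}(\mathcal{H}) f_\lambda\|_{L^p}/\|f_\lambda\|_{L^p}$ and letting $\lambda \to \infty$ produces the lower bound $\alpha \geq \gamma(d,p)$: the factor $\tfrac{1}{3}$ in $\gamma$ is a direct imprint of the $\lambda^{-1/6}$ Airy width, which produces exponents two-thirds of those that would arise from a Knapp example on a uniformly elliptic surface. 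The main technical obstacle is the sharp $L^p$-asymptotics of the Airy-concentrated eigenstate, which I would extract from Hermite-function bounds of Koch--Tataru / Karadzhov type combined with the Mehler-kernel representation of the projectors $\mathfrak{p}_k$.
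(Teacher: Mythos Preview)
The paper does not prove this proposition: it is quoted verbatim as \cite[Proposition~4.2]{LeeRyu2022}, so there is no ``paper's own proof'' to compare against. Your transplantation argument for the first condition $\alpha>\delta(d,p)$ is the standard one and is correct.

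For the second condition $\alpha\ge\gamma(d,p)$ your proposal has a genuine gap. If $\psi_\lambda$ is a Hermite eigenfunction with eigenvalue $\mu$, then your ``short spectral window'' $f_\lambda=\sum_{k\in I_\lambda}\mathfrak{p}_k\psi_\lambda$ equals either $\psi_\lambda$ or $0$, since $\psi_\lambda$ already lives in a single eigenspace. In either case $\mathcal{B}^\alpha_\lambda(\mathcal{H})f_\lambda=(1-\mu/\lambda)^\alpha_+\,f_\lambda$ is a scalar multiple of $f_\lambda$, and the ratio
\[
\frac{\|\mathcal{B}^\alpha_\lambda(\mathcal{H})f_\lambda\|_{L^p}}{\|f_\lambda\|_{L^p}}
=\Big(1-\frac{\mu}{\lambda}\Big)^\alpha_+\le 1
\]
is independent of $p$ and never exceeds $1$. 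No choice of $\mu$ produces a lower bound on $\|\mathcal{B}^\alpha_\lambda(\mathcal{H})\|_p$ that grows with $\lambda$, so no constraint on $\alpha$ can emerge this way. The Airy width $\lambda^{-1/6}$ you correctly identify governs the $L^p$ norms of $\psi_\lambda$, but since the same function appears in numerator and denominator those norms cancel.

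What Lee--Ryu actually do is test against a \emph{spatially} localised function (essentially a bump near a point on the turning sphere $|x|=\sqrt{\lambda}$), not a spectrally localised one. Such a function has broad Hermite-spectral content, and applying $\mathcal{B}^\alpha_\lambda(\mathcal{H})$ produces a genuinely different function whose $L^p$ norm can be read off from the Airy asymptotics of the Bochner--Riesz kernel $K^\alpha_\lambda(x,y)=\sum_k(1-k/\lambda)^\alpha_+\,\Pi_k(x,y)$ near the degenerate set. The $\lambda^{-1/6}$ scale then enters through the kernel, not through a single eigenfunction, and this is what yields the factor $\tfrac{1}{3}$ in $\gamma(d,p)$.
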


This distinguishes $p_c = \frac{2(d+1)}{d}$ as critical exponent, for which $\| \mathcal{B}_\lambda^{\alpha}(\mathcal{H}) \|_{p} \leq C$ can be expected to hold uniform in $\lambda \geq 1$ for any $\alpha > 0$. Recall that the local smoothing conjecture for the Euclidean wave equation implies the Bochner--Riesz conjecture; see, e.g., \cite[Section~3.2]{BeltranHickmanSogge2021}. By following the argument we shall see now how the critical index $p_c$ for the Hermite Bochner--Riesz means relates to the critical index for local smoothing estimates for the wave Hermite equation. 
\begin{proposition}
Suppose that for $p \geq 2$, $\Lambda \gg 1$ the estimate
\begin{equation*}
\| e^{it \tau \sqrt{\mathcal{H}}} f \|_{L^p_{t,x}([-1,1] \times \R^d)} \lesssim_\varepsilon (\lambda \tau)^\varepsilon \| f \|_{L^p_x(\R^d)}
\end{equation*}
holds true for $\mathfrak{P}_\lambda f = f$. Then the estimate $\| \mathcal{B}^\alpha_\lambda(\mathcal{H}) \|_p \leq C$ holds for any $\alpha>0$ uniform in $\lambda \geq 1$.
\end{proposition}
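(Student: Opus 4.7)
The plan is to apply Fourier inversion to the spectral multiplier $F(\mu) = (1-\mu^2/\lambda)^\alpha_+$, so that $\mathcal{B}^\alpha_\lambda(\mathcal{H}) = F(\sqrt{\mathcal{H}})$ becomes an integral of the half-wave propagator, and then to pair the hypothesized local smoothing on dyadic time pieces with the Bessel-type decay of $\hat F$. First decompose $F = F\psi_\ll + F\psi_\sim$, where $\psi_\sim(\mu) = \psi_0(\mu/\sqrt\lambda)$ for some $\psi_0\in C^\infty_c((1/4,2))$ equal to one near $u=1$. The smooth piece $F\psi_\ll$ is a Mihlin--H\"ormander multiplier of $\sqrt{\mathcal H}$ with constants independent of $\lambda$, so that $F\psi_\ll(\sqrt{\mathcal H})$ is bounded on $L^p$ uniformly in $\lambda$ by Thangavelu's multiplier theorem for $\mathcal H$.

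For the cusp piece $F\psi_\sim(\sqrt{\mathcal H})f$, insert a fattening cutoff $\tilde\psi\in C^\infty_c((1/8,4))$ with $\tilde\psi = 1$ on the support of $\psi_0$, and set $g = \tilde\psi(\sqrt{\mathcal H}/\sqrt\lambda)f$. Then $F\psi_\sim(\sqrt{\mathcal H})f = F\psi_\sim(\sqrt{\mathcal H})g$, $\|g\|_{L^p}\lesssim\|f\|_{L^p}$ by the same multiplier theorem, and $g$ is spectrally localized at scale $\sqrt\lambda$, i.e.\ $\mathfrak P_{\sqrt\lambda}g = g$ up to $O(1)$-overlap. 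Fourier inverting in $\mu$ and invoking the standard Bessel representation for cusps of order $\alpha$ gives
\begin{equation*}
F\psi_\sim(\sqrt{\mathcal H})g = \frac{1}{\pi}\int_0^\infty\widehat{F\psi_\sim}(t)\cos(t\sqrt{\mathcal H})g\,dt,\qquad |\widehat{F\psi_\sim}(t)|\lesssim \sqrt\lambda(1+\sqrt\lambda|t|)^{-\alpha-1}.
\end{equation*}

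Partition $(0,\infty)$ into dyadic intervals $|t|\sim 2^k$, $k\in\Z$, and apply H\"older in $t$. The hypothesized local smoothing, applied to $g$ with spectral scale $\sqrt\lambda$ (playing the role of $\lambda$ in the hypothesis) and rescaled by $\tau = 2^k$, yields
\begin{equation*}
\|\cos(t\sqrt{\mathcal H})g\|_{L^p_{t,x}(\{|t|\sim 2^k\}\times\R^d)}\lesssim_\varepsilon 2^{k/p}(\sqrt\lambda\cdot 2^k)^\varepsilon\|g\|_{L^p},
\end{equation*}
while $\|\widehat{F\psi_\sim}\|_{L^{p'}(|t|\sim 2^k)}\lesssim 2^{k/p'}\sqrt\lambda(1+\sqrt\lambda\cdot 2^k)^{-\alpha-1}$. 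Writing $u=\sqrt\lambda\cdot 2^k$, the product is bounded by $\min(u,\,u^{-(\alpha-\varepsilon)})\|g\|_{L^p}$; summing over $k\in\Z$ gives $O(1)\|g\|_{L^p}$ uniformly in $\lambda$, provided $\varepsilon<\alpha$, which is available for any $\alpha>0$.

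The main obstacle is the kernel estimate $|\widehat{F\psi_\sim}(t)|\lesssim \sqrt\lambda(1+\sqrt\lambda|t|)^{-\alpha-1}$: extracting the full $(\alpha+1)$-order decay at the cusp $\mu = \sqrt\lambda$ rests on the Bessel integral representation of $(1-u^2)^\alpha_+$, or equivalently a careful stationary-phase analysis capturing the fractional smoothness of the cusp, and one must verify that the fattening $\psi_\sim$ preserves this cancellation. Once this is granted the dyadic time summation closes for every $\alpha>0$, delivering uniform $L^p$-boundedness of $\mathcal B^\alpha_\lambda(\mathcal H)$.
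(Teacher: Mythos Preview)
Your approach is essentially the classical reduction of Bochner--Riesz to the half-wave propagator via the Fourier transform of the cusp, and it is close in spirit to the paper's argument. The kernel estimate $|\widehat{F\psi_\sim}(t)|\lesssim\sqrt\lambda(1+\sqrt\lambda|t|)^{-\alpha-1}$ that you flag as the ``main obstacle'' is in fact completely standard: near $\mu=\sqrt\lambda$ one has $F\psi_\sim(\mu)=(1-\mu/\sqrt\lambda)^\alpha_+\cdot(\text{smooth})$, and the Fourier transform of a compactly supported function with a single $(\,\cdot\,)^\alpha_+$ singularity decays like $|t|^{-\alpha-1}$; the localisation $\psi_\sim$ does not spoil this. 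So your uncertainty there is unwarranted.

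The genuine gap is elsewhere. You invoke the hypothesis with $\tau=2^k$ for \emph{every} $k\in\Z$, but the hypothesis is only assumed for $\tau\gg 1$, and for $\tau<1$ it does \emph{not} follow from the $\tau=1$ case: rescaling $[-1,1]\ni t\mapsto \tau t$ produces an extra factor $\tau^{-1/p}$ on the left side. If you use only the $\tau=1$ bound $\|\cos(t\sqrt{\mathcal H})g\|_{L^p_{t,x}([-1,1])}\lesssim(\sqrt\lambda)^\varepsilon\|g\|_{L^p}$ for all the pieces $|t|\sim 2^k\le 1$, the H\"older pairing with $\|\widehat{F\psi_\sim}\|_{L^{p'}(|t|\sim 2^k)}$ sums to $\lambda^{1/(2p)+\varepsilon}$, which is not uniform in $\lambda$. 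Your claimed bound $2^{k/p}(\sqrt\lambda\,2^k)^\varepsilon$ for small $k$ therefore needs independent justification, and none is given.

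The paper avoids this by reversing the order of operations: it first decomposes dyadically in the \emph{spectral} distance to the cusp, writing $(\sqrt\lambda-\sqrt{\mathcal H})^\delta_+=\sum_{k\ge k_0}2^{-k\delta}\chi(2^k(\sqrt\lambda-\sqrt{\mathcal H}))+\chi_0(\sqrt\lambda-\sqrt{\mathcal H})$, and only then Fourier-inverts each smooth bump $\chi(2^k\,\cdot\,)$. This way every piece has $\hat\chi$ Schwartz, so the time variable stays in $[-1,1]$ and the effective dilation parameter is $\tau=2^k\ge 1$; the region $|\sqrt\lambda-\sqrt{\mathcal H}|\gtrsim 1$ (morally your small-$t$ region) is absorbed into the smooth multiplier $\chi_0$. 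Your argument is easily repaired along these lines, but as written the small-time summation does not close.
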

\begin{proof}
We decompose
\begin{equation*}
\begin{split}
\big( 1 - \frac{\mathcal{H}}{\lambda} \big)^\delta_+ &= \lambda^{-\delta} (\sqrt{\lambda} - \sqrt{\mathcal{H}})^\delta_+ (\sqrt{\lambda} + \sqrt{\mathcal{H}})^\delta \chi_\lambda(\mathcal{H}) \\
&= \lambda^{-\delta} \big( \sum_{k=k_0}^{\infty} 2^{-\delta k } \chi(2^k( \sqrt{\lambda} - \sqrt{\mathcal{H}})) + \chi_0(\sqrt{\lambda} - \sqrt{\mathcal{H}}) \big) (\sqrt{\lambda} + \sqrt{\mathcal{H}})^\delta \chi_\lambda(\mathcal{H}).
\end{split}
\end{equation*}
Above $\chi$ denotes a suitable bump function, $\chi_0$ localizes to the smooth contribution $\mathcal{H} \ll \lambda$, and $\chi_\lambda$ denotes a smooth bump function adapted to $B(0,4\lambda)$. The ``smooth'' part is readily estimated:
\begin{equation*}
\| \lambda^{-\delta} \chi_0(\sqrt{\lambda} - \sqrt{\mathcal{H}}) (\sqrt{\lambda} + \sqrt{\mathcal{H}})^\delta \chi_{\lambda}(\mathcal{H}) \|_{p} \leq C.
\end{equation*}
For the main contribution from the singularity we use Fourier inversion:
\begin{equation*}
\chi(2^k(\sqrt{\lambda} - \sqrt{\mathcal{H}}) = \int e^{i 2^k t (\sqrt{\lambda} - \sqrt{\mathcal{H}})} \hat{\chi}(t) dt.
\end{equation*}
Since we can estimate 
\begin{equation*}
\| (\sqrt{\lambda} + \sqrt{\mathcal{H}})^\delta \chi_\lambda(\mathcal{H}) \|_{p} \lesssim \lambda^{\delta/2},
\end{equation*}
it remains to analyze
\begin{equation}
\label{eq:DecompositionBochnerRiesz}
\lambda^{-\frac{\delta}{2}} \big\| \sum_{k = k_0}^\infty 2^{-k \delta} \int_{\R} e^{i t 2^k (\sqrt{\lambda} - \sqrt{\mathcal{H}})} \hat{\chi}(t) f dt \|_{L^p(\R^d)}.
\end{equation}
By an application of Minkowski's inequality, H\"older's inequality, and the decay of $\hat{\chi}$, we find that it suffices to show
\begin{equation*}
\big\| e^{i 2^k t \sqrt{\mathcal{H}}} f \big\|_{L^p_{t,x}([-\lambda^{\delta/10},\lambda^{\delta/10}] \times \R^d)} \lesssim_\varepsilon 2^{k \varepsilon} \lambda^{\frac{\delta}{10}} \| f \|_{L^p}.
\end{equation*}
We make a change of variables and see by hypothesis
\begin{equation*}
\begin{split}
\big\| e^{i 2^k \sqrt{\mathcal{H}} t} f \big\|_{L^p_{t,x}([-\lambda^{\delta/10},\lambda^{\delta/10}] \times \R^d)} &= \lambda^{\frac{\delta}{10 p}} \| e^{i 2^k \lambda^{\frac{\delta}{10}} \sqrt{\mathcal{H}} t} f \|_{L^p_{t,x}([-1,1] \times \R^d)} \\
&\lesssim_\varepsilon 2^{k \varepsilon} \lambda^{\frac{\delta \varepsilon}{10} + \varepsilon} \lambda^{\frac{\delta}{10 p}} \| f \|_{L^p(\R^d)}.
\end{split}
\end{equation*}
Choosing $\varepsilon$ small enough and plugging this into \eqref{eq:DecompositionBochnerRiesz} we complete the proof.

\end{proof}

We remark that in one dimension, by some changes of variables the sharp local smoothing estimates established in Section \ref{section:ProofLocalSmoothing1d} indicate the uniform estimates
\begin{equation*}
\| \mathcal{B}_{\lambda}^{\alpha}(\mathcal{H}) \|_4 \leq C
\end{equation*}
for any $\alpha > 0$. This is the endpoint for global Hermite Bochner--Riesz summability in one dimension.

\subsection{Remarks on local Bochner--Riesz estimates}

Thangavelu \cite{Thangavelu1998} moreover considered local Bochner--Riesz estimates for the Hermite operator, i.e., $L^p$-bounds uniform in $\lambda \geq 1$
\begin{equation*}
\| \chi_E \mathcal{B}^\alpha_{\lambda}(\mathcal{H}) \chi_F \|_{p} \leq C
\end{equation*}
where $E$, $F \subseteq \R^d$ denote subsets of $\R^d$ and $\chi_E$ denotes the indicator function. Invoking for $E =F = B(0,\varepsilon)$ the transplantation result \cite[Theorem~3]{KenigStantonTomas1982} yields 
boundedness of the Euclidean Bochner--Riesz means. Recently, significant progress on local Hermite Bochner--Riesz means was made by Lee--Ryu \cite{LeeRyu2022}. So, one might still believe that the summability for the local Hermite Bochner--Riesz means coincides with the summability of the Euclidean Bochner--Riesz means. In this section we shall see that this likely not the case.

\medskip

 We recall notations from \cite{LeeRyu2022}. Let $d \geq 2$, and
\begin{equation*}
p_0(d) = 
\begin{cases}
2 \cdot \frac{3d+2}{3d-2} \text{ if } d \text{ is even}, \\
2 \cdot \frac{3d+1}{3d-3} \text{ if } d \text{ is odd},
\end{cases}
\end{equation*}
and
\begin{equation}
\label{eq:HermitianDistance}
\begin{split}
\mathcal{D}(x,y) &= 1 + (x \cdot y)^2 - |x|^2 - |y|^2, \quad (x,y) \in \R^d \times \R^d, \\
\mathfrak{D}(c_0) &= \{(x,y) \in \R^d \times \R^d : |x|,|y| \leq 1 - c_0, \quad \mathcal{D}(x,y) > c_0^2 \}.
\end{split}
\end{equation}

Lee--Ryu \cite{LeeRyu2022} found a condition on $E$, $F$ for local estimates to hold, which are not effected by their new counterexamples for the global estimates. They showed the following:
\begin{theorem}[{\cite[Theorem~1.2]{LeeRyu2022}}]
Let $d \geq 2$, and suppose that $E,F \subseteq \R^d$ are compact sets such that $ E \times F \subseteq \mathfrak{D}(c_0)$ for some $0<c_0<1$. Then there is a constant $C$ independent of $\lambda$ such that
\begin{equation*}
\| \chi_{E_\lambda} \mathcal{B}_\lambda^\alpha(\mathcal{H}) \chi_{F_\lambda} \|_{p \to p} \leq C,
\end{equation*}
provided that $p > p_0(d)$ and $\alpha > \alpha(d,p)$, where $E_\lambda$, $F_\lambda$ denote the dilated sets $\sqrt{\lambda} E$, $\sqrt{\lambda} F $, respectively.
\end{theorem}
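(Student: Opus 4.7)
The plan is to Fourier-invert the Bochner--Riesz symbol so that the operator is realized as a superposition of Hermite half-wave propagators $e^{\pm it\sqrt{\mathcal{H}}}$, and then to exploit the non-degeneracy encoded in $E\times F\subseteq \mathfrak{D}(c_0)$ by inserting the FIO parametrix of Proposition \ref{prop:ExistenceEikonal} and reducing to a local Fourier restriction estimate for the characteristic surface of $\mathcal{H}$.

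First I would apply the dilation $x\mapsto x/\sqrt{\lambda}$, which is $L^p$-isometric up to a scalar and conjugates $\mathcal{H}/\lambda$ to the semiclassical Hermite operator $\mathcal{H}_h = h^2(-\Delta) + |y|^2$ with $h = 1/\lambda$, while sending $E_\lambda\times F_\lambda$ back to $E\times F$. Mimicking the decomposition in Section \ref{section:BochnerRiesz}, write
\begin{equation*}
\Bigl(1-\tfrac{\mathcal{H}}{\lambda}\Bigr)_+^{\alpha} = \lambda^{-\alpha}\bigl(\sqrt{\lambda}-\sqrt{\mathcal{H}}\bigr)_+^{\alpha}\bigl(\sqrt{\lambda}+\sqrt{\mathcal{H}}\bigr)^{\alpha}\chi_{\lambda}(\mathcal{H}),
\end{equation*}
dyadically decompose the first factor into scales $|\sqrt{\lambda}-\sqrt{\mathcal{H}}|\sim 2^{-k}$, and Fourier-invert each piece into integrals of $e^{i 2^{k}t\sqrt{\mathcal{H}}}$ against Schwartz-class weights. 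Matters reduce to showing, uniformly in $h$,
\begin{equation*}
\bigl\|\chi_E\, e^{i 2^{k}t\sqrt{\mathcal{H}_h}}\,\chi_F\bigr\|_{p\to p} \lesssim 2^{k(\alpha(d,p)+\varepsilon)}, \qquad |t|\lesssim 1,
\end{equation*}
which combined with the $2^{-k(\alpha+1/2)}$ Bochner--Riesz tail sums geometrically whenever $\alpha>\alpha(d,p)$.

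For the key step I would substitute the FIO parametrix for each dyadic half-wave piece. Since the Hamilton flow \eqref{eq:HamiltonFlow} rotates orbits in the symplectic planes $\mathrm{span}\{x,\xi\}$ and preserves $|x|^2+|\xi|^2$, a direct computation identifies $\mathcal{D}(x,y)$ with (a constant multiple of) the discriminant that decides whether a point $y$ reaches $x$ non-tangentially under the flow at a time of order one. The hypothesis $\mathcal{D}(x,y)\geq c_0^2$ therefore produces a uniform lower bound on $|\det\partial^2_{x\xi}\phi|$ on $E\times F$, excluding caustics and making the canonical relation of $\chi_E\,e^{it\sqrt{\mathcal{H}_h}}\,\chi_F$ a uniformly non-degenerate graph. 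The standard Carleson--Sj\"olin/H\"ormander reduction --- a $TT^\ast$ argument together with stationary-phase kernel bounds --- then converts the required $L^p\to L^p$ estimate into a Fourier restriction estimate on the characteristic variety of $\mathcal{H}$ localized to the piece dual to $E\times F$. The threshold $p_0(d)$ is precisely the index at which uniform restriction estimates for this variety are currently known.

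The main obstacle I foresee is the quantitative translation of the algebraic condition $\mathcal{D}(x,y)\geq c_0^2$ into a uniform lower bound on the mixed Hessian of $\phi$, with constants depending only on $c_0$: this is the non-caustic hypothesis that prevents the restriction constants from degenerating. A secondary technical point is securing that the Carleson--Sj\"olin constants remain uniform as one chops phase space into pieces of scale comparable to $c_0$ and lets $h\to 0$; here some care is required so that the decomposition needed to decouple $E$ and $F$ does not reintroduce the global obstructions of \cite{LeeRyu2022} that forced the introduction of $\mathfrak{D}(c_0)$ in the first place.
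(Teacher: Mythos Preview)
This theorem is not proved in the present paper; it is quoted from \cite{LeeRyu2022}, and the paper only summarizes the Lee--Ryu argument: one writes $\chi_{E_\lambda}\mathcal{B}^\alpha_\lambda(\mathcal{H})\chi_{F_\lambda}$ as an oscillatory integral operator with the explicit phase $\Phi_{\mathcal{H}}$ (obtained from the Hermite propagator kernel, not from the eikonal parametrix of Section~\ref{section:Preliminaries}), verifies the Carleson--Sj\"olin conditions $H1)$ and $H2^+)$ on $\mathfrak{D}(c_0)$ via Lemmas~\ref{lem:CarlesonSjoelin} and~\ref{lem:CurvatureDescription}, and then invokes the sharp $L^p$ bounds of Guth--Hickman--Iliopoulou \cite{GuthHickmanIliopoulou2019} for such operators. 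That is where $p_0(d)$ comes from.

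Your outline diverges from this in two places, and the second is a genuine gap. First, you route through the half-wave FIO parametrix of Sections~\ref{section:Preliminaries}--\ref{section:LinearizationReduction} rather than the closed-form kernel; this is not unreasonable, but you do not explain the passage from the $(x,\xi)$-parametrized FIO to an $(x,y)$-kernel operator of Carleson--Sj\"olin type, which is exactly the step where the phase $\Phi_{\mathcal{H}}$ and the geometric meaning of $\mathcal{D}(x,y)$ emerge. Note also that $\mathcal{D}(x,y)\geq c_0^2$ is used to secure \emph{both} the rank condition on $\partial^2_{xy}\Phi_{\mathcal{H}}$ and the sign-definiteness of the curvature form $\mathbf{M}''$ (Lemma~\ref{lem:CarlesonSjoelin}), not merely the absence of caustics.

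Second, and more seriously, your final appeal is to a ``Fourier restriction estimate on the characteristic variety'', with the claim that $p_0(d)$ is the known restriction threshold. It is not: $p_0(d)$ is the exponent of \cite{GuthHickmanIliopoulou2019} for \emph{variable-coefficient} H\"ormander-type operators satisfying $H1)+H2^+)$, which is strictly worse than the (conjectured, and in low dimensions proved) restriction range for the corresponding constant-coefficient surfaces. A constant-coefficient restriction estimate cannot close the argument because the phase genuinely depends on $x$; the $TT^*$/stationary-phase step you describe goes in the wrong direction. To recover the stated range you must invoke \cite{GuthHickmanIliopoulou2019} directly on the oscillatory integral operator, as Lee--Ryu do.
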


The proof revolves around reducing to oscillatory integral estimates with phase functions satisfying ellipticity conditions. Then invoking the results of Guth--Hickman--Iliopoulou \cite{GuthHickmanIliopoulou2019} the above theorem follows.
In the following we point out that the phase function $\phi \in C^\infty(\R^n,\R^{n-1})$ in the oscillatory integral obtained by Lee--Ryu \cite{LeeRyu2022}
\begin{equation*}
T^\lambda f(x) = \int e^{i \phi^\lambda(x;\xi)} a(x;\xi) f(\xi) d\xi, \quad (x,\xi) = (x',x_n,\xi) \in \R^{d-1} \times \R \times \R^{d-1}
\end{equation*}
does in general not satisfy the Bourgain condition \cite{Bourgain1991,GuoWangZhang2022}. 
Firstly, we recall the more basic non-degeneracy and ellipticity assumptions on the phase function:
\begin{equation*}
\begin{split}
&H1) \quad \partial^2_{x \xi} \phi \text{ has maximal rank } n-1, \\
&H2^+) \quad \partial^2_{\xi \xi} \langle \partial_x \phi(x,\xi), G_0(x,\xi_0) \rangle \big\vert_{\xi = \xi_0} \text{ has } n-1 \text{ eigenvalues of the same sign.}
\end{split}
\end{equation*}
In the above display $G_0$ denotes the unnormalized Gauss map of the embedded surface $\xi \mapsto \partial_x \phi(x,\xi)$:
\begin{equation*}
G_0(x,\xi) = \partial^2_{x \xi_1} \phi(x,\xi) \wedge \partial^2_{x \xi_2} \phi(x,\xi) \ldots \wedge \partial^2_{x \xi_{n-1}} \phi(x,\xi) \in \Lambda^{n-1} \R^n \equiv \R^n.
\end{equation*}
$H1)$ and $H2^+)$ are sometimes referred to as Carleson--Sj\"olin condition.



\medskip

The analysis in \cite{LeeRyu2022} shows that the main contribution can be reduced to an oscillatory integral operator, which satisfies $H1)$ and $H2^+)$. This can be regarded as analog of the Carleson--Sj\"olin reduction in case of the Euclidean Bochner--Riesz means. Guth--Hickman--Iliopoulou \cite{GuthHickmanIliopoulou2019} showed the sharp results for phase functions satisfying the above conditions, in the sense that there are non-degenerate and elliptic phase functions, which do not allow for further improved estimates.

\medskip

However, already in 1991, Bourgain \cite{Bourgain1991} observed that a certain curvature conditionallows for improved estimates of the oscillatory integral operator beyond non-degeneracy and ellipticity assumptions, whereas failure of the condition leads to failure of the $L^p$-bounds suggested by the restriction conjecture. The additional curvature condition will be referred to as \emph{Bourgain condition} following Guo--Wang--Zhang \cite{GuoWangZhang2022}. Under this curvature condition Guo--Wang--Zhang \cite{GuoWangZhang2022} reported the currently widest range of $L^p$-estimates for oscillatory integral operators, going beyond the range proved by Guth--Hickman--Iliopoulou \cite{GuthHickmanIliopoulou2019} for $n \geq 3$. The original definition in \cite{Bourgain1991} used normal forms. We use the following characterization \cite[Theorem~2.1]{GuoWangZhang2022}:
\begin{definition}
Let $\phi \in C^\infty(\R^n;\R^{n-1})$ be a phase function, which satisfies $H1)$ and $H2)$. We say that $\phi$ satisfies Bourgain's condition at $(x_0,\xi_0) \in \R^n \times \R^{n-1}$ if and only if
\begin{equation*}
(( G_0 \cdot \nabla_x)^2 \partial^2_{\xi \xi} \phi(x_0,\xi_0)) \text{ is a scalar multiple of } ( G_0 \cdot \nabla_x) \partial^2_{\xi \xi} \phi(x_0,\xi_0).
\end{equation*}
\end{definition}


\medskip

%

In the following we show that the phase function obtained in \cite{LeeRyu2022} does \emph{not} satisfy the Bourgain condition. We repeat the decompositions from \cite{LeeRyu2022} for convenience.

\medskip

After the decompositions by Lee--Ryu \cite{LeeRyu2022} the local Hermite Bochner--Riesz estimates are reduced to $L^p$-estimates for the oscillatory integral (cf. \cite[Eq.~(2.67)]{LeeRyu2022}):
\begin{equation}
\label{eq:ReductionOscillatoryIntegralLeeRyu}
\big\| \int e^{i \lambda \rho \tilde{\mathcal{P}}_{\mathcal{H}}(0,x,y)} A(x,y) f(y) dy \big\|_{L^p(\R^d)} \lesssim (\lambda \rho)^{-\frac{d}{p}} \| f \|_{L^p(\R^d)}
\end{equation}
with $\text{supp}(A) \subseteq \{(x,y) \in B(0,\epsilon_0) \times B(0,\epsilon_0) : |x-y| \geq c \epsilon_0 \}$ for some $c \ll 1$.

\medskip

For $(x,y) \in \mathfrak{D}(c_0)$ we define $S_c$ and $S_*$ implicitly by
\begin{equation*}
\begin{split}
\cos S_c(x,y) &= \langle x, y \rangle + \sqrt{\mathcal{D}(x,y)}, \\
\cos S_*(x,y) &= \langle x, y \rangle - \sqrt{\mathcal{D}(x,y)}.
\end{split}
\end{equation*}
The symmetric phase function is given by
\begin{equation*}
\begin{split}
\Phi_{\mathcal{H}}(x,y) &= \frac{1}{2} \big( S_c(x,y) + \frac{(|x|^2+|y|^2) \cos S_c(x,y) - 2 x \cdot y}{\sin S_c(x,y)} \big) \\
&= \frac{1}{2} \big( S_c(x,y) - \cos S_*(x,y) \sin S_c(x,y) \big).
\end{split}
\end{equation*}

For $\Phi_{\mathcal{H}}$ we consider $(x,y) \in \text{supp}(\chi_{\ell}) \times \text{supp}(\chi_{\ell}')$. The smooth functions $\chi_{\ell}$, $\chi'_{\ell}$ are adapted to balls of radius $\sim 2^{-\ell} \sim \rho$, which are centered at $(x_0,y_0)$. More precisely,
\begin{equation}
\label{eq:SupportConditions}
\begin{split}
&\quad \text{supp}(\chi_\ell) \times \text{supp}(\chi_{\ell}') \subseteq \mathfrak{D}(c_0/2), \\
&\quad \text{supp}(\chi_\ell), \; \text{supp}(\chi_{\ell}') \subseteq B(x_0,2^{2-\ell}) \text{ for some } x_0 \in \mathfrak{D}(c_0), \\
&\quad 2^{-\ell-2} \leq \text{dist}(\text{supp}(\chi_{\ell}),\text{supp}(\chi_{\ell}')) \leq 2^{-\ell}, \\
&\quad |\partial_x^\alpha \chi_{\ell}|, |\partial_x^\alpha \chi_{\ell}'| \leq C_\alpha 2^{|\alpha| \ell}.
\end{split}
\end{equation}

We obtain a phase function $\tilde{\mathcal{P}}_{\mathcal{H}}$ with uniformly bounded derivatives by rescaling:
\begin{equation*}
\tilde{\mathcal{P}}_{\mathcal{H}}(0,x,y) = \rho^{-1} \Phi_{\mathcal{H}}(x_0 + \rho x, y_0 + \rho y).
\end{equation*}

\medskip

To facilitate description of derivatives of $\Phi_{\mathcal{H}}$, we recall more notations from \cite{LeeRyu2022}:
\begin{equation*}
\a(x,y)= \cos(S_c(x,y)) x - y, \quad \b(x,y) = x - \cos(S_c(x,y)) y.
\end{equation*}
To keep compatibility with notations from \cite{LeeRyu2022}, we denote the transpose of a vector $x \in \R^d$ by $x^t$.
For $(x,y) \in \text{supp}(\chi_{\ell}) \times \text{supp}(\chi_{\ell}')$ like in \eqref{eq:SupportConditions} we have
\begin{equation*}
|\a| , |\b|, |\a^t \b | \gtrsim \rho.
\end{equation*}

It turns out that $\a$ spans the kernel of $\partial^2_{xy} \Phi_{\mathcal{H}}$. The curvature properties of $z \mapsto \partial_x \Phi_{\mathcal{H}}(x,z)$ are encoded by
\begin{equation*}
\mathbf{M}(x,y) = \partial^2_{zz} \big\langle \partial_x \Phi_{\mathcal{H}}(x,z), \frac{\a(x,y)}{|\a(x,y)|} \big\rangle \big\vert_{z = y}.
\end{equation*}

In the following for a matrix $A \in \mathbb{R}^{d \times d}$ we denote the $(d-1) \times (d-1)$-submatrix by $A'' = ( A_{ij} )_{1 \leq i,j \leq d-1}$ and let $I_d = \text{diag}(1,\ldots,1) \in \R^{d \times d}$ denote the unit matrix. It holds:
\begin{lemma}[{\cite[Lemma~2.13]{LeeRyu2022}}]
\label{lem:CarlesonSjoelin}
Let $(x,y) \in \text{supp}(\chi_l) \times \text{supp}(\chi_l')$. Then the following is true:
\begin{itemize}
\item[(i)] The matrix $\partial^2_{xy} \Phi_{\mathcal{H}}(x,y)$ has rank $d-1$ and
\begin{equation*}
\partial^2_{xy} \Phi_{\mathcal{H}}(x,y) \a(x,y) = 0.
\end{equation*}
\item[(ii)] If $(x,y) \in \text{supp}(\chi_l) \times \text{supp}(\chi_l')$ satisfies
\begin{equation*}
\frac{\b(x,y)}{|\b(x,y)|} = \mathbf{e}_d,
\end{equation*}
then the submatrix $\mathbf{M}''(x,y) = \{ \mathbf{M}(x,y)_{i,j} \}_{1 \leq i,j \leq d-1}$ of $\mathbf{M}(x,y)$ has negative eigenvalues $\lambda_1,\ldots,\lambda_{d-1}$ such that
\begin{equation*}
- \lambda_i \sim |x-y|^2, \quad 1 \leq i \leq d-1.
\end{equation*}
\end{itemize}
\end{lemma}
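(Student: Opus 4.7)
I would prove Lemma 2.13 by a direct computation streamlined by three algebraic identities which encode the conservation $|x|^{2}+|\xi|^{2}=1$ along the unit-energy Hamilton flow of $\sqrt{|x|^{2}+|\xi|^{2}}$:
\begin{equation*}
|\mathbf{a}|^{2}=s^{2}(1-|x|^{2}),\qquad |\mathbf{b}|^{2}=s^{2}(1-|y|^{2}),\qquad \mathbf{a}\cdot\mathbf{b}=rs^{2},
\end{equation*}
where I abbreviate $c=\cos S_{c}$, $s=\sin S_{c}$, $r=\sqrt{\mathcal{D}}=c-\langle x,y\rangle$. Each follows from expanding $|\mathbf{a}|^{2}=c^{2}|x|^{2}-2c\langle x,y\rangle+|y|^{2}$ (and similarly for $|\mathbf{b}|^{2}$ and $\mathbf{a}\cdot\mathbf{b}$) and substituting the defining relation $r^{2}=1+\langle x,y\rangle^{2}-|x|^{2}-|y|^{2}$.

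The first step is to show $\partial_{x}\Phi_{\mathcal{H}}=\mathbf{a}/s$. Implicit differentiation of $\cos S_{c}=\langle x,y\rangle+r$ gives $\partial_{x}c=-\mathbf{b}/r$, $\partial_{x}s=c\mathbf{b}/(rs)$, $\partial_{x}S_{c}=\mathbf{b}/(rs)$; substituting into the symmetric form $\Phi_{\mathcal{H}}=\tfrac{1}{2}(S_{c}-c_{\ast}s)$ with $c_{\ast}=2\langle x,y\rangle-c$, and using $1-cc_{\ast}=s^{2}+2cr$, collapses everything to $\mathbf{a}/s$. Differentiating once more in $y$ (with the analogous $\partial_{y}c=\mathbf{a}/r$, $\partial_{y}s=-c\mathbf{a}/(rs)$) yields the rank-one-plus-diagonal decomposition
\begin{equation*}
\partial^{2}_{xy}\Phi_{\mathcal{H}}=-s^{-1}I_{d}+(rs^{3})^{-1}\mathbf{b}\mathbf{a}^{T}.
\end{equation*}
Part (i) is then immediate: left-multiplying by $\mathbf{a}^{T}$ and using $\mathbf{a}\cdot\mathbf{b}=rs^{2}$ gives $\mathbf{a}^{T}\partial^{2}_{xy}\Phi_{\mathcal{H}}=-\mathbf{a}^{T}/s+(rs^{2})\mathbf{a}^{T}/(rs^{3})=0$, and the matrix determinant lemma shows the rank is exactly $d-1$.

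For (ii), the key observation is that the map $z\mapsto\xi(z):=\partial_{x}\Phi_{\mathcal{H}}(x,z)=\mathbf{a}(x,z)/s(x,z)$ takes values in the sphere $\{|\xi|^{2}=1-|x|^{2}\}$, by the first identity applied with $z$ in place of $y$. Differentiating the constraint $|\xi(z)|^{2}=1-|x|^{2}$ twice at $z=y$ and using $\xi(y)=|\xi_{0}|\,\mathbf{a}/|\mathbf{a}|$ produces the clean formula
\begin{equation*}
\mathbf{M}(v,w)=-|\xi_{0}|^{-1}\langle Mv,Mw\rangle,\qquad |\xi_{0}|=\sqrt{1-|x|^{2}},\quad M=\partial^{2}_{xy}\Phi_{\mathcal{H}}.
\end{equation*}
Under the normalization $\mathbf{b}=|\mathbf{b}|\mathbf{e}_{d}$ one computes $M\mathbf{e}_{i}=-\mathbf{e}_{i}/s+a_{i}|\mathbf{b}|(rs^{3})^{-1}\mathbf{e}_{d}$ for $i\le d-1$, so that
\begin{equation*}
\mathbf{M}''_{ij}=-|\xi_{0}|^{-1}\Bigl(\frac{\delta_{ij}}{s^{2}}+\frac{a_{i}a_{j}|\mathbf{b}|^{2}}{r^{2}s^{6}}\Bigr),\qquad 1\le i,j\le d-1,
\end{equation*}
a negative multiple of $I_{d-1}$ plus a negative semidefinite rank-one piece. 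All $d-1$ eigenvalues are therefore strictly negative, and their orders of magnitude follow from the scalings of $s,r,|\mathbf{a}|,|\mathbf{b}|$ on the support $\mathfrak{D}(c_{0}/2)$ together with the lower bounds $|\mathbf{a}|,|\mathbf{b}|,\mathbf{a}\cdot\mathbf{b}\gtrsim|x-y|$ recorded just before the lemma. The main technical difficulty is book-keeping the first and second derivatives of the four intertwined scalars $c,c_{\ast},r,s$; the three algebraic identities above — each a direct consequence of unit-energy conservation — are precisely what render both the kernel in (i) and the structure of $\mathbf{M}''$ in (ii) transparent.
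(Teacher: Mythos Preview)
The paper does not supply a proof of this lemma; it is quoted from \cite[Lemma~2.13]{LeeRyu2022}, so there is nothing here to compare your argument against. Judged on its own, your approach is correct and efficient: the identity $\partial_x\Phi_{\mathcal H}=\a/s$, the decomposition $\partial^2_{xy}\Phi_{\mathcal H}=-s^{-1}I_d+(rs^3)^{-1}\b\a^{t}$, and the geometric observation that $z\mapsto\partial_x\Phi_{\mathcal H}(x,z)$ takes values in the sphere $\{|\xi|^2=1-|x|^2\}$, which gives $\mathbf M=-|\xi_0|^{-1}M^{t}M$ with $M=\partial^2_{xy}\Phi_{\mathcal H}$, are all right; one checks directly that this coincides with the explicit formula recorded in Lemma~\ref{lem:CurvatureDescription}.

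Two points need a remark. With your formula for $M$ one has $\a^{t}M=0$ and $M\b=0$; thus $\a$ spans the \emph{left} kernel, whereas the statement asserts $M\a=0$. This is a transpose convention for the mixed Hessian (the Lee--Ryu convention is the transpose of yours), but you should say so rather than silently pass from $M\a$ to $\a^{t}M$. Secondly, your final sentence on the eigenvalue sizes is too vague and, once carried through, actually contradicts the exponent printed above: on the support one has $r,|\xi_0|\sim 1$ and $s\sim|\a|\sim|\b|\sim|x-y|$, so your expression for $\mathbf M''$ yields $-\lambda_i\sim|\xi_0|^{-1}s^{-2}\sim|x-y|^{-2}$, not $|x-y|^{2}$. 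This inverse-square rate is consistent with Lemma~\ref{lem:CurvatureDescription} (numerator $\sim\rho^{4}$, denominator $\omega\,\a^{t}\b\sim\rho^{6}$) and is precisely what makes the rescaled phase $\tilde{\mathcal P}_{\mathcal H}$ satisfy $H2^+)$ uniformly, since $\mathbf M$ acquires a factor $\rho^{2}$ under the rescaling. The exponent in the statement is a transcription slip; your computation establishes the corrected version.
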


$\mathbf{M}$ can be computed explicitly:
\begin{lemma}[{\cite[Lemma~2.14]{LeeRyu2022}}]
\label{lem:CurvatureDescription}
Let $(x,y) \in \mathfrak{D}(c_0)$ and \\ $\omega(x,y) = \sqrt{(1-|x|^2) \mathcal{D}(x,y)} \sin^4 S_c(x,y)$. Then we have
\begin{equation}
\label{eq:CurvatureDescription}
\mathbf{M}(x,y) = \frac{\a^t \b I_{d} - \a \b^t}{\omega(x,y) \a^t \b} \big( \b \a^t - \a^t \b I_{d \times d} \big).
\end{equation}
\end{lemma}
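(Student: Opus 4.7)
This is a direct computation that proceeds in three stages: (i) obtain a closed form for $\partial_x \Phi_{\mathcal{H}}(x,z)$; (ii) simplify the scalar pairing with $\a(x,y)/|\a(x,y)|$; (iii) differentiate twice in $z$ and evaluate at $z=y$. Throughout I abbreviate $c = \cos S_c(x,z)$, $s = \sin S_c(x,z)$, $D = \sqrt{\mathcal{D}(x,z)}$, $c_* = \cos S_*(x,z)$, and use the elementary identities
\[
\partial_x c = -\b(x,z)/D, \quad \partial_z c = \a(x,z)/D, \quad \partial_z D = \a(x,z)/D - x,
\]
which follow directly from $c = \langle x,z\rangle + D$ and the definitions of $\a,\b,\mathcal{D}$.

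Starting from the compact form $\Phi_{\mathcal{H}} = \tfrac{1}{2}(S_c - c_* s)$ (the second equality in the definition), differentiation in $x$ combined with the collapse $1 - s^2 - cc_* = c^2 - cc_* = 2cD$ yields the clean expression
\[
\partial_x \Phi_{\mathcal{H}}(x,z) = \frac{\cos S_c(x,z)}{\sin S_c(x,z)}\,\b(x,z) - \sin S_c(x,z)\, z.
\]
Substituting $\b(x,z) = x - c(x,z) z$ and using $c^2+s^2=1$ in the scalar pairing identifies
\[
F(z) := \Bigl\langle \partial_x \Phi_{\mathcal{H}}(x,z),\, \frac{\a(x,y)}{|\a(x,y)|}\Bigr\rangle = \frac{\langle \a(x,z),\,\a(x,y)\rangle}{|\a(x,y)|\,\sin S_c(x,z)}.
\]
From this closed form, Lemma \ref{lem:CarlesonSjoelin}(i) reduces to the identity $|\a(x,y)|^2 = \sin^2 S_c(x,y)\,(1 - |x|^2)$, which one checks directly from $|\a|^2 = c^2|x|^2 - 2c(x\cdot y) + |y|^2$ together with $|x|^2 + |y|^2 - 1 = c c_*$.

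To compute $\mathbf{M}(x,y) = \partial_{zz}^2 F(y)$, write $F = \tilde G/(|\a|\tilde H)$ with $\tilde G(z) = \langle \a(x,z),\a(x,y)\rangle$ and $\tilde H(z) = \sin S_c(x,z)$. Since $\partial_z F(y) = 0$, the quotient rule collapses to
\[
\mathbf{M}(x,y) = \frac{\partial_{zz}^2\tilde G(y) - \bigl(|\a|^2/s\bigr)\,\partial_{zz}^2 \tilde H(y)}{|\a|\,\sin S_c(x,y)}.
\]
Differentiating via $\partial_z c = \a/D$ and $\partial_z D = \a/D - x$, one obtains the symmetric expressions
\[
\partial_{zz}^2\tilde G(y) = (x\cdot\a)\Bigl[\tfrac{x\a^t+\a x^t}{D^2} - \tfrac{I}{D} - \tfrac{\a\a^t}{D^3}\Bigr], \quad \partial_{zz}^2\tilde H(y) = -\tfrac{D-cs^2}{s^3 D^3}\a\a^t - \tfrac{c}{sD^2}(x\a^t+\a x^t) + \tfrac{c}{sD}I.
\]
Substituting $x\cdot\a = -c(1-|x|^2)+D$ and $|\a|^2 = s^2(1-|x|^2)$, the coefficients of $x\a^t+\a x^t$ and of $I$ collapse to $1/D$ and $-1$ respectively, while the $\a\a^t$ coefficient reduces to $(c^2-|x|^2)/(s^2 D^2)$.

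The last step converts $x$ into $\a,\b$. From $\a = cx - z$ and $\b = x - cz$ one gets $x = (\b - c\a)/s^2$, whence $x\a^t + \a x^t = (\b\a^t + \a\b^t - 2c\,\a\a^t)/s^2$. Combining with $\a^t\b = Ds^2$ and $|\b|^2 = s^2(|x|^2 - c^2 + 2cD)$, direct matching of coefficients identifies the result with
\[
\mathbf{M}(x,y) = \frac{\a^t\b(\b\a^t+\a\b^t) - (\a^t\b)^2 I - |\b|^2\a\a^t}{\omega(x,y)\,\a^t\b},
\]
whose numerator factors as $(\a^t\b\,I - \a\b^t)(\b\a^t - \a^t\b\,I)$, yielding \eqref{eq:CurvatureDescription}. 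The main obstacle is purely algebraic bookkeeping in this final matching: although each of the underlying scalar identities ($|\a|^2 = s^2(1-|x|^2)$, $\a^t\b = Ds^2$, $x = (\b-c\a)/s^2$, $|\b|^2 = s^2(|x|^2 - c^2 + 2cD)$) is elementary, it is their simultaneous action that both annihilates the asymmetric contributions in $\partial_{zz}^2\tilde G - (|\a|^2/s)\partial_{zz}^2\tilde H$ and reproduces exactly the coefficient structure of the product $(\a^t\b I - \a\b^t)(\b\a^t - \a^t\b I)$.
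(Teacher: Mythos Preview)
Your computation is correct. The paper does not actually prove this lemma: it is quoted verbatim from \cite[Lemma~2.14]{LeeRyu2022}, and the only additional content the present paper supplies is the observation $(\a^t\b\,I_d - \a\b^t)\a\a^t = 0$, used to simplify the original Lee--Ryu expression into the factored form \eqref{eq:CurvatureDescription}. Your argument therefore cannot be compared to the paper's own proof, since there is none; rather, you have supplied an independent verification of the cited result.

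For what it is worth, your derivation is clean and the key simplifications are all sound. The identity $\partial_x\Phi_{\mathcal{H}}(x,z)=\a(x,z)/\sin S_c(x,z)$ (which is what your expression $\tfrac{c}{s}\b-sz$ collapses to after one more step) makes the structure of $F(z)$ transparent, and the vanishing of $\partial_zF|_{z=y}$ then follows from the scalar identity $(x\cdot\a)s^2 - s^2D + c|\a|^2=0$, which you have implicitly used in invoking Lemma~\ref{lem:CarlesonSjoelin}(i). The second-derivative bookkeeping, the substitution $x=(\b-c\a)/s^2$, and the final matching with the expanded product $(\a^t\b)(\b\a^t+\a\b^t)-(\a^t\b)^2 I - |\b|^2\a\a^t$ all check out; in particular the identification $\omega\cdot\a^t\b = (\a^t\b)^2|\a|s$ follows from $|\a|=s\sqrt{1-|x|^2}$ and $\a^t\b=Ds^2$.
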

We remark that we have simplified the expression in \cite{LeeRyu2022} observing
\begin{equation*}
(\a^t \b I_d - \a \b^t) \a \a^t = 0.
\end{equation*}


Then the $L^p$-$L^p$-estimates \eqref{eq:ReductionOscillatoryIntegralLeeRyu} follow from a reduced phase function $\phi_{y_0}$, which is obtained from $\Phi_{\mathcal{H}}$ by freezing a suitable component:
Suppose in the following by rotation invariance that 
\begin{equation}
\label{eq:Conditionb}
\frac{\mathbf{b}(x_0,y_0)}{|\b(x_0,y_0)|} = \mathbf{e}_d.
\end{equation}

Define (cf. \cite[p.~27]{LeeRyu2022})
\begin{equation*}
\phi_{y_d}(x,\xi)= \tilde{\mathcal{P}}_{\mathcal{H}}(0,x,\xi,y_d),
\end{equation*}
which was shown in \cite{LeeRyu2022} to satisfy the Carleson--Sj\"olin condition. The direction proportional to $\b$ is distinguished because $\b^t \mathbf{M} = 0, \; \mathbf{M} \b = 0$.

We show the following:
\begin{proposition}[Generic~failure~of~Bourgain~condition]
Let the notations be like above and assume \eqref{eq:Conditionb}. For $x_0$ in a neighbourhood of the origin, $\phi_{y_d}$ satisfies the Bourgain condition at the origin if and only if there is $c \in \R$ such that $x_0 = c y_0$.
\end{proposition}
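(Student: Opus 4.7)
The plan is to translate the Bourgain condition into an algebraic identity between two $(d-1)\times(d-1)$ matrix-valued derivatives of $\Phi_{\mathcal{H}}$, and then analyze this identity by exploiting the rotational invariance of $\Phi_{\mathcal{H}}$. Since $\Phi_{\mathcal{H}}$ is symmetric in its two $d$-dimensional arguments, Schwarz's lemma combined with Lemma~\ref{lem:CarlesonSjoelin}(i) shows that $\mathbf{b}(x_0,y_0)$ spans the left null space of $\partial^2_{xy}\Phi_{\mathcal{H}}(x_0,y_0)$; freezing the $y_d$-coordinate only deletes the last column, and since $\phi_{y_d}$ satisfies $H1)$, the resulting $d\times(d-1)$ matrix still has rank $d-1$ and one-dimensional left null space spanned by $\mathbf{b}$. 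By \eqref{eq:Conditionb} this gives $G_0\propto\mathbf{e}_d$, so the Bourgain condition at the origin for $\phi_{y_d}$ becomes: there exists $\lambda\in\R$ with
\begin{equation}
\label{eq:BourgainReduction}
\partial_{x_d}^2\bigl(\partial^2_{yy}\Phi_{\mathcal{H}}\bigr)''(x_0,y_0) \;=\; \lambda\,\partial_{x_d}\bigl(\partial^2_{yy}\Phi_{\mathcal{H}}\bigr)''(x_0,y_0),
\end{equation}
where $''$ denotes the upper-left $(d-1)\times(d-1)$ block.

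For sufficiency, suppose $x_0 = c\,y_0$. Then $\mathbf{b}(x_0,y_0) = (c - \cos S_c(x_0,y_0))\,y_0$, and the constraint $\mathbf{b}\parallel\mathbf{e}_d$ together with $\mathbf{b}\neq 0$ forces $y_0\parallel\mathbf{e}_d$, hence $x_0\parallel\mathbf{e}_d$ as well. Since $\Phi_{\mathcal{H}}(x,y)$ depends only on the $O(d)$-invariants $|x|^2$, $|y|^2$, $x\cdot y$, it is invariant under the diagonal action of the $O(d-1)$-stabilizer of $\mathbf{e}_d$, and $(x_0,y_0)$ is a fixed point of this action. Both matrices in \eqref{eq:BourgainReduction} are therefore $O(d-1)$-invariant symmetric $(d-1)\times(d-1)$ matrices, hence each is a scalar multiple of $I_{(d-1)\times(d-1)}$, and proportionality is automatic.

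For necessity, suppose $x_0\not\parallel y_0$. From $\mathbf{b}\parallel\mathbf{e}_d$ we read off $x_0^{(j)} = \cos S_c(x_0,y_0)\,y_0^{(j)}$ for $1\leq j\leq d-1$; if these components of $y_0$ all vanished, both $x_0$ and $y_0$ would lie in $\R\mathbf{e}_d$, forcing $x_0\parallel y_0$, a contradiction. After an $O(d-1)$-rotation in the first $d-1$ coordinates we may therefore assume that only $y_0^{(1)}$ is non-zero among $y_0^{(1)},\ldots,y_0^{(d-1)}$, and correspondingly only $x_0^{(1)}$ is non-zero among $x_0^{(1)},\ldots,x_0^{(d-1)}$. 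The residual $O(d-2)$-symmetry in the directions $\mathbf{e}_2,\dots,\mathbf{e}_{d-1}$ decomposes each side of \eqref{eq:BourgainReduction} into block-diagonal form $\mathrm{diag}(A_k,\mu_k I_{(d-2)\times(d-2)})$ with
$$A_k \;=\; \partial_{x_d}^k\partial^2_{y_1y_1}\Phi_{\mathcal{H}}(x_0,y_0), \qquad \mu_k \;=\; \partial_{x_d}^k\partial^2_{y_2y_2}\Phi_{\mathcal{H}}(x_0,y_0), \qquad k=1,2,$$
and \eqref{eq:BourgainReduction} collapses to the single scalar equation $A_1\mu_2 = A_2\mu_1$.

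The remaining task, and the main obstacle, is the explicit computation of $A_k,\mu_k$ and the verification that $A_1\mu_2\neq A_2\mu_1$ once $y_0^{(1)}\neq 0$. To this end, I would use $\mathbf{M}(x,y)\,|\mathbf{a}(x,y)| = (\mathbf{a}(x,y)\cdot\nabla_x)\,\partial^2_{yy}\Phi_{\mathcal{H}}(x,y)$, an immediate consequence of the definition of $\mathbf{M}$, together with the closed form \eqref{eq:CurvatureDescription}, and differentiate the defining relations for $\mathcal{D}$, $S_c$, $\mathbf{a}$, $\mathbf{b}$ in the $\mathbf{e}_d$-direction to obtain rational expressions for $A_k,\mu_k$ in terms of $x_0^{(1)},x_0^{(d)},y_0^{(1)},y_0^{(d)}$ and $\cos S_c(x_0,y_0)$. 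A Taylor expansion around $y_0^{(1)}=0$, where the sufficiency argument gives $A_1\mu_2 = A_2\mu_1$ identically, should then isolate the leading obstruction as a polynomial whose vanishing locus, intersected with the constraint $x_0^{(1)} = \cos S_c(x_0,y_0)\,y_0^{(1)}$, forces $y_0^{(1)}=0$, contradicting our setup.
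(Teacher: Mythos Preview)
Your sufficiency argument via the $O(d-1)$-symmetry is correct and is a clean alternative to the paper's direct computation. However, the necessity direction is not proved: you reduce to the scalar identity $A_1\mu_2=A_2\mu_1$ and then write ``I would use\ldots'' and ``should then isolate\ldots''. This is a plan, not a proof, and nothing guarantees that the Taylor expansion about $y_0^{(1)}=0$ produces a non-vanishing obstruction without actually computing it. The paper avoids precisely this difficulty: rather than computing two matrices and comparing them, it shows by a direct Leibniz-rule calculation (using the explicit formula \eqref{eq:CurvatureDescription} and the identities $\partial_a\mathbf{a}=\beta\mathbf{a}+\alpha\mathbf{b}$, $\partial_a\mathbf{b}=(1+\alpha)\mathbf{a}+\gamma\mathbf{b}$) that $(G_0\cdot\nabla_x)\tilde{\mathbf{M}}''(x_0,y_0)$ is a \emph{nonzero scalar multiple of} $I_{d-1}$. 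The Bourgain condition then collapses to asking whether $\tilde{\mathbf{M}}''(x_0,y_0)=-(\mathbf{a}\mathbf{a}^t)''-(a_db_d)^2 I_{d-1}$ is itself a multiple of $I_{d-1}$, which is immediately equivalent to $\mathbf{a}\parallel\mathbf{e}_d$, i.e.\ $\mathbf{a}\parallel\mathbf{b}$, i.e.\ $x_0\parallel y_0$. This one-line reduction is the missing idea in your approach.

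Two further points you should revisit. First, your identification $G_0\propto\mathbf{b}$ disagrees with the paper, which uses $G_0\propto\mathbf{a}$; the curvature matrix $\mathbf{M}$ in Lemma~\ref{lem:CurvatureDescription} is defined by pairing $\nabla_x\Phi_{\mathcal H}$ with $\hat{\mathbf{a}}$, and the paper's computation is consistent with $(G_0\cdot\nabla_x)=\partial_a$. Your symmetry argument for $\mathbf{b}$ in the left null space is formally correct, but you should check the matrix conventions in \cite{LeeRyu2022} before building the whole reduction on $G_0\parallel\mathbf{e}_d$. Second, you replace $(G_0\cdot\nabla_x)^2$ by $\partial_{x_d}^2$, treating $G_0$ as a constant vector; the characterization you cite involves the vector field $G_0(x,\xi)$, and the paper keeps track of the variation of $\mathbf{a}$ through $\partial_a\mathbf{a}$, $\partial_a\mathbf{b}$ in the computation. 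If $G_0$ is not constant to the required order, your reduction \eqref{eq:BourgainReduction} acquires extra terms that your block-diagonalization does not account for.
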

\begin{proof}
Let $\mathbf{a}_{\rho}(x,y) = \mathbf{a}(\rho x + x_0, \rho y + y_0)$ for $(x,\xi,y_d) \in B(0,\epsilon_0) \times B(0,\epsilon_0)$.

Then it follows from Lemma \ref{lem:CurvatureDescription} that
\begin{equation*}
\partial_{\zeta} \big\langle \partial_x \phi_{y_d}(x,\zeta), \frac{\a_{\rho}(x,\xi,y_d)}{|\a_{\rho}(x,\xi,y_d)|} \big\rangle \big\vert_{\zeta = \xi} = 0
\end{equation*}
and
\begin{equation*}
\partial_{\xi \xi}^2 \big\langle \partial_x \phi_{(y_0)_d} (x,\xi), \frac{\a(x_0,y_0)}{|\a(x_0,y_0)|} \big\rangle \big\vert_{(x,\xi) = (0,0)} = \rho^2 \mathbf{M}''(x_0,y_0).
\end{equation*}
Indeed, we have
\begin{equation*}
\a_{\rho}(x,\xi) \sim \partial^2_{x \xi_1} \phi_{(y_0)_d}(x,\xi) \wedge \partial^2_{x \xi_2} \phi_{(y_0)_d}(x,\xi) \wedge \ldots \wedge \partial^2_{x \xi_{d-1}} \phi_{(y_0)_d}(x,\xi).
\end{equation*}
So we obtain
\begin{equation*}
(G_0 \cdot \nabla_x) \partial^2_{\xi \xi} \phi_{(y_0)_d}(x,\xi) \sim \mathbf{M}''(x_0 + \rho x,y_0' + \rho \xi, (y_0)_d).
\end{equation*}

Set
\begin{equation}
\label{eq:TildeM}
\tilde{\mathbf{M}}(x,y) = (\a^t \b) \b \a^t - (\a^t \b)^2 I_{d} - (\b^t \b) \a \a^t + (\a^t \b) \a \b^t,
\end{equation}
which satisfies $\tilde{\mathbf{M}} \sim \mathbf{M}$.
By the above observation, the Bourgain condition holds for $\phi_{(y_0)_d}$ at the origin if and only if
\begin{equation}
\label{eq:BourgainConditionReduction}
(\a \cdot \nabla_x) \tilde{\mathbf{M}}''(x_0,y_0) \sim \tilde{\mathbf{M}}''(x_0,y_0).
\end{equation}
Here we use that it is admissible to omit scalar functions and that $G_0 \sim \a$. 

We introduce notations:
\begin{equation*}
\partial_a  = \a \cdot \nabla, \quad \partial_a \cos(S_c(x,y)) = \kappa(x,y).
\end{equation*}
We will check that
\begin{equation*}
\partial_a \tilde{\mathbf{M}}''(x_0,y_0) = \lambda(x_0,y_0) I_{d-1}, \quad \lambda(0,y_0) \neq 0, \quad \lambda \in C^\infty,
\end{equation*}
whereas
\begin{equation*}
\tilde{\mathbf{M}}''(x_0,y_0) = - (\a \a^t)'' - (a_d b_d)^2 I_{d-1}.
\end{equation*}
The following observation concludes the proof:
\begin{equation}
\label{eq:Observation}
\tilde{\mathbf{M}}''(x_0,y_0) \not \sim I_{d-1} \Leftrightarrow \a \not \sim \b \Leftrightarrow x_0 \not \sim y_0.
\end{equation}

We have
\begin{equation*}
\begin{split}
\begin{pmatrix}
\a \\ \b
\end{pmatrix}
&=
\begin{pmatrix}
\cos(S_c) & - 1\\
1 & -\cos(S_c)
\end{pmatrix}
\begin{pmatrix}
x \\ y
\end{pmatrix} \\
\Leftrightarrow
\begin{pmatrix}
x \\ y
\end{pmatrix}
&=
\frac{1}{1-\cos^2(S_c)}
\begin{pmatrix}
- \cos(S_c) & 1 \\
-1 & \cos(S_c)
\end{pmatrix}
\begin{pmatrix}
\a \\ \b
\end{pmatrix}
.
\end{split}
\end{equation*}
We obtain
\begin{equation}
\label{eq:CovariantDerivative}
\begin{split}
\partial_a \a &= \frac{\kappa(x,y)}{1-\cos^2(S_c(x,y))} \b \\
&\quad + \frac{- \kappa(x,y) \cos(S_c(x,y)) + (1-\cos^2(S_c(x,y)) \cos(S_c(x,y))  }{1-\cos^2(S_c(x,y))} \a \\
&= \beta \a + \alpha \b, \\
\partial_a \b &= (1+\alpha) \a - \frac{\kappa(x,y) \cos(S_c(x,y))}{1-\cos^2(S_c(x,y))} \b = (1+\alpha) \a + \gamma \b.
\end{split}
\end{equation}

\medskip

Next, we compute by the Leibniz rule and \eqref{eq:CovariantDerivative}:
\begin{equation}
\label{eq:CompBCI}
\begin{split}
\partial_a (\a^t \b \b \a^t) &= \alpha (\b^t \b)(\b \a^t) + (1+\alpha) (\a^t \a) (\b \a^t) + (\a^t \b) (1+\alpha) (\a \a^t) + \alpha (\a^t \b) (\b \b^t) \\
&\quad + (2 \beta+ 2 \gamma) \a^t \b \b \a^t.
\end{split}
\end{equation}
The second line stems from substituting a multiple of the quantity of which we take the derivative. Note that by $(2,2)$-homogeneity in $\a$ and $\b$ of all the expressions in the second line of \eqref{eq:TildeM} we always obtain a $(2 \beta + 2 \gamma)$ multiple of the original $(2,2)$-homogeneous expression.


We further compute
\begin{equation}
\label{eq:CompBCII}
\partial_a ((\a^t \b)^2) = 2 (\alpha \b^t \b + \a^t \a (1+\alpha)) \a^t \b + 2(\gamma + \beta) (\a^t \b)^2.
\end{equation}

Next,
\begin{equation}
\label{eq:CompBCIII}
\partial_a (\a (\b^t \b) \a^t) = \alpha \b (\b^t \b) \a^t + \a (\b^t \b) \alpha \b^t + 2(1+\alpha) (\a^t \b) \a \a^t + 2(\beta + \gamma) \a (\b^t \b) \a^t.
\end{equation}

Moreover,
\begin{equation}
\label{eq:CompBCIV}
\begin{split}
\partial_a (\a \b^t (\a^t \b)) &= \alpha \b \b^t (\a^t \b) + (1+\alpha) \a \a^t \a^t \b + \alpha (\a \b^t \b^t \b) + \a \b^t \a^t (1+\alpha) \a \\
&\quad + 2(\beta + \gamma) \a (\b^t \b) \a^t.
\end{split}
\end{equation}

We obtain subsuming \eqref{eq:CompBCI}-\eqref{eq:CompBCIV}:
\begin{equation*}
\begin{split}
\partial_a \tilde{\mathbf{M}} &= \alpha (\b^t \b)(\b \a^t) + (1+\alpha) (\a^t \a) (\b \a^t) + (\a^t \b) (1+\alpha) (\a \a^t) + \alpha (\a^t \b) (\b \b^t) \\
&\quad - 2 (\alpha \b^t \b + \a^t \a (1+\alpha)) \a^t \b \\
&\quad - \alpha \b (\b^t \b) \a^t - \a (\b^t \b) \alpha \b^t - 2(1+\alpha) (\a^t \b) \a \a^t \\
&\quad + \alpha \b \b^t (\a^t \b) + (1+\alpha) \a \a^t \a^t \b + \alpha (\a \b^t \b^t \b) + \a \b^t \a^t (1+\alpha) \a.
\end{split}
\end{equation*}
Now we evaluate at $(x_0,y_0)$ and take advantage of $\b \sim \mathbf{e}_d$, simplifying matters:
\begin{equation}
\label{eq:DirectionalDerivativeM''}
\partial_a \tilde{\mathbf{M}}''(x_0,y_0) = -2 (\alpha \b^t \b + \a^t \a (1+\alpha)) \a^t \b I_{d-1} = - (2 \partial_a (\a^t \b)) \a^t \b I_{d-1}.
\end{equation}
To show that the expression does not vanish for our choice of $(x_0,y_0)$, we use the identity \cite[Eq.~(2.56)]{LeeRyu2022}:
\begin{equation*}
\a^t \b = \sqrt{\mathcal{D}(x,y)} \sin^2(S_c(x,y)) = \sqrt{\mathcal{D}(x,y)} (1-\cos^2(S_c(x,y)).
\end{equation*}
It follows
\begin{equation*}
\partial_a (\a^t \b) = \frac{\partial_a \mathcal{D}}{2 \sqrt{\mathcal{D}(x,y)}} \sin^2(S_c(x,y)) - \sqrt{\mathcal{D}(x,y)} \cos(S_c(x,y)) \partial_a \cos(S_c(x,y)).
\end{equation*}
We have
\begin{equation*}
\begin{split}
\partial_a \mathcal{D}(x,y) &= 2 (\a \cdot y) (x \cdot y) - 2 \a \cdot x, \\
\partial_a \cos(S_c(x,y)) &= \a \cdot y + \frac{\partial_a \mathcal{D}(x,y)}{2 \sqrt{\mathcal{D}(x,y)}}.
\end{split}
\end{equation*}
This implies $\partial_a \mathcal{D}(0,y_0) = 0$ and
\begin{equation*}
\big( \partial_a \cos(S_c(x,y)) \big) \vert_{(x,y) = (0,y_0)} = - \| y \|^2.
\end{equation*}
Since $|\a^t \b| \sim |y|^2$, we can conclude
\begin{equation*}
\partial_a \tilde{\mathbf{M}}''(x_0,y_0) = \lambda(x_0,y_0) 1_d, \quad \lambda(0,y_0) \sim |y_0|^4.
\end{equation*}
Smoothness of $\lambda$ is clear and shows that $\partial_a \tilde{\mathbf{M}}''$ is not vanishing.
The proof is complete by \eqref{eq:Observation}.
\end{proof}

\section*{Acknowledgements}
Part of this work was conducted at the Korea Institute for Advanced Study, whose financial support through the grant No. MG093901 is gratefully acknowledged.
The paper was finished while visiting UC Berkeley, and I would like to thank Daniel Tataru for kind hospitality.

\end{document}